\DeclareMathOperator{\Res}{Res}
\DeclareMathOperator{\Ext}{Ext}
\DeclareMathOperator{\Hom}{Hom}
\DeclareMathOperator{\Gr}{Gr}
\DeclareMathOperator{\rk}{rk}
\DeclareMathOperator{\id}{Id}
\DeclareMathOperator{\sExt}{\sE\!\mathit{xt}}
\DeclareMathOperator{\usExt}{\underline{\sE\!\mathit{xt}}}
\DeclareMathOperator{\usHom}{\underline{\sH\!\mathit{om}}}
\renewcommand{\Im}{\operatorname{Im}}
\begin{document} 
\theoremstyle{plain}
\newtheorem{thm}{Theorem}[section]
\newtheorem{lem}[thm]{Lemma}
\newtheorem{cor}[thm]{Corollary}
\newtheorem{prop}[thm]{Proposition}
\newtheorem{prop-defn}[thm]{Proposition-Definition}
\newtheorem{question}[thm]{Question}
\newtheorem{claim}[thm]{Claim}

\theoremstyle{definition}
\newtheorem{remark}[thm]{Remark}
\newtheorem{defn}[thm]{Definition}
\newtheorem{ex}[thm]{Example}
\newtheorem{conj}[thm]{Conjecture}
\numberwithin{equation}{section}
\newcommand{\eq}[2]{\begin{equation}\label{#1}#2 \end{equation}}
\newcommand{\ml}[2]{\begin{multline}\label{#1}#2 \end{multline}}
\newcommand{\ga}[2]{\begin{gather}\label{#1}#2 \end{gather}}
\newcommand{\mc}{\mathcal}
\newcommand{\mb}{\mathbb}
\newcommand{\surj}{\twoheadrightarrow}
\newcommand{\inj}{\hookrightarrow}
\newcommand{\red}{{\rm red}}
\newcommand{\codim}{{\rm codim}}
\newcommand{\rank}{{\rm rank}}
\newcommand{\Pic}{{\rm Pic}}
\newcommand{\Div}{{\rm Div}}
\newcommand{\im}{{\rm im}}
\newcommand{\Spec}{{\rm Spec \,}}
\newcommand{\Sing}{{\rm Sing}}
\newcommand{\Char}{{\rm char}}
\newcommand{\Tr}{{\rm Tr}}
\newcommand{\Gal}{{\rm Gal}}
\newcommand{\Min}{{\rm Min \ }}
\newcommand{\Max}{{\rm Max \ }}
\newcommand{\ti}{\times }
\newcommand{\sA}{{\mathcal A}}
\newcommand{\sB}{{\mathcal B}}
\newcommand{\sC}{{\mathcal C}}
\newcommand{\sD}{{\mathcal D}}
\newcommand{\sE}{{\mathcal E}}
\newcommand{\sF}{{\mathcal F}}
\newcommand{\sG}{{\mathcal G}}
\newcommand{\sH}{{\mathcal H}}
\newcommand{\sI}{{\mathcal I}}
\newcommand{\sJ}{{\mathcal J}}
\newcommand{\sK}{{\mathcal K}}
\newcommand{\sL}{{\mathcal L}}
\newcommand{\sM}{{\mathcal M}}
\newcommand{\sN}{{\mathcal N}}
\newcommand{\sO}{{\mathcal O}}
\newcommand{\sP}{{\mathcal P}}
\newcommand{\sQ}{{\mathcal Q}}
\newcommand{\sR}{{\mathcal R}}
\newcommand{\sS}{{\mathcal S}}
\newcommand{\sT}{{\mathcal T}}
\newcommand{\sU}{{\mathcal U}}
\newcommand{\sV}{{\mathcal V}}
\newcommand{\sW}{{\mathcal W}}
\newcommand{\sX}{{\mathcal X}}
\newcommand{\sY}{{\mathcal Y}}
\newcommand{\sZ}{{\mathcal Z}}
\newcommand{\A}{{\mathbb A}}
\newcommand{\B}{{\mathbb B}}
\newcommand{\C}{{\mathbb C}}
\newcommand{\D}{{\mathbb D}}
\newcommand{\E}{{\mathbb E}}
\newcommand{\F}{{\mathbb F}}
\newcommand{\G}{{\mathbb G}}
\renewcommand{\H}{{\mathbb H}}
\newcommand{\I}{{\mathbb I}}
\newcommand{\J}{{\mathbb J}}
\newcommand{\M}{{\mathbb M}}
\newcommand{\N}{{\mathbb N}}
\renewcommand{\P}{{\mathbb P}}
\newcommand{\Q}{{\mathbb Q}}
\newcommand{\R}{{\mathbb R}}
\newcommand{\T}{{\mathbb T}}
\newcommand{\U}{{\mathbb U}}
\newcommand{\V}{{\mathbb V}}
\newcommand{\W}{{\mathbb W}}
\newcommand{\X}{{\mathbb X}}
\newcommand{\Y}{{\mathbb Y}}
\newcommand{\Z}{{\mathbb Z}}
\newcommand{\pic}{{\text{Pic}(C,\sD)[E,\nabla]}}
\newcommand{\ocd}{{\Omega^1_C\{\sD\}}}
\newcommand{\oc}{{\Omega^1_C}}
\newcommand{\al}{{\alpha}}
\newcommand{\be}{{\beta}}
\newcommand{\ta}{{\theta}}
\newcommand{\ve}{{\varepsilon}}
\newcommand{\lr}[2]{\langle #1,#2 \rangle}
\newcommand{\nnn}{\newline\newline\noindent}
\newcommand{\nn}{\newline\noindent}
\newcommand{\snote}[1]{{\color{blue} Spencer: #1}}

\newcommand{\onote}[1]{{\color{blue} Omid: #1}}
\newcommand{\jnote}[1]{{\color{orange} Javier: #1}}
\newcommand{\jinote}[1]{{\color{teal} Jose: #1}}
\newcommand{\vs}{\mathcal V} 
\newcommand{\ws}{\mathcal W} 
\newcommand{\spanningf}{\mathcal{SF}}
\newcommand{\gr}{\mathrm{gr}}

\newcommand{\ok}{\mathbb K} 
\newcommand{\ovar}{Y} 
\newcommand{\Diag}{\mathrm{Diag}}
\newcommand{\green}{\mathfrak g}
\newcommand{\p}{\mathbf{p}}
\newcommand{\ps}{\mathbf{p}}
\newcommand{\oA}{\mathfrak A}
\newcommand{\oB}{\mathfrak B}
\newcommand{\oD}{\mathfrak D}

\newcommand{\ad}{\operatorname{ad}}
\newcommand{\K}{\mathbb K}
\newcommand{\an}{\operatorname{an}}
\newcommand{\val}{\mathrm{val}}
\newcommand{\Ar}{\mathrm{Ar}}
\newcommand{\onto}{\twoheadrightarrow}  
\newcommand{\oL}{\mathrm{L}}
\newcommand{\rl}{\mathit{l}}

\title{Feynman Amplitudes and Limits of Heights}
\author[O. Amini]{Omid Amini}
\address{CNRS--DMA, \'Ecole Normale Sup\'erieure, 45 rue d'Ulm, Paris}
\email{oamini@math.ens.fr}

\author[S. Bloch]{Spencer Bloch}
\address{5765 S. Blackstone Ave., Chicago, IL 60637, USA}
\email{spencer\_bloch@yahoo.com}

\author[J. I. Burgos Gil]{Jos\'{e} Ignacio Burgos GIl}
\address{Instituto de Ciencias Matem\'aticas (CSIC-UAM-UCM-UCM3).
  Calle Nicol\'as Ca\-bre\-ra~15, Campus UAM, Cantoblanco, 28049 Madrid,
  Spain.} 
\email{burgos@icmat.es}

\author[J. Fres\'{a}n]{Javier Fres\'{a}n}
\address{ETH, D-MATH, R\"amistrasse 101, CH-8092 Z\"urich, Switzerland}
\email{javier.fresan@math.ethz.ch}

\begin{abstract} 
We investigate from a mathematical perspective how Feynman amplitudes appear in the low-energy limit of string amplitudes. In this paper, we prove the convergence of the integrands. 
We derive this from results describing the asymptotic behavior of the height pairing between degree-zero divisors, as a family of curves degenerates. These are obtained by means of the nilpotent orbit theorem in Hodge theory.
\end{abstract}
\maketitle

\bigskip

\begin{flushright}
\textit{\`A Jean-Pierre Serre, en t\'emoignagne d'admiration}
\end{flushright}

\section{Introduction}

This paper grew out of an attempt to understand from a mathematical perspective the idea we learned from physicists that Feynman amplitudes should arise in the low-energy limit $\alpha' \to 0$ of string theory amplitudes, cf. \cite{T} and the references therein. Throughout we work in space-time $\R^D$ with a given Minkowski bilinear form $\langle\cdot\,,\cdot\rangle$. 

\smallskip

String amplitudes are integrals over the moduli
space $\mathcal M_{g,n}$ of genus $g \geq 1$ curves with $n$ marked
points. They are associated to a fixed collection of external momenta
$\underline \p=(\ps_1, \dots, \ps_n)$, which are vectors in $\mathbb
R^D$ satisfying the conservation law $\sum_{i=1}^n \ps_i =0$. Up to
some factors carrying information about the physical process being
studied, the string amplitude can be written as (see
e.g.~\cite[p.182]{Vor}) 
\begin{equation}\label{string-int}
A_{\alpha'}(g, \underline \p)= \int_{\mathcal M_{g,n}} \exp(-i\, \alpha'  \mathcal{F}) \,\,d\nu_{g,n}. 
\end{equation} In this expression, $d\nu_{g,n}$ is a volume form on $\mathcal{M}_{g, n}$, independent of the momenta, $\alpha'$ is a positive real number, which one thinks of as the square of the string length, and $\mathcal{F} \colon \mathcal M_{g,n} \rightarrow \mathbb R$ is the continuous function defined at the point $[C, \sigma_1, \ldots, \sigma_n]$ of $\mathcal M_{g, n}$ by 
\begin{equation*}
\mathcal{F}([C, \sigma_1, \ldots, \sigma_n])=\sum_{1 \leq i, j \leq n} \langle \ps_i, \ps_j \rangle \, \green'_{\Ar,C}(\sigma_i, \sigma_j), 
\end{equation*} where $\green'_{\Ar, C}$ denotes a regularized version of the canonical Green function on $C$ so that it takes finite values on the diagonal. 

\smallskip

On the other hand, correlation functions in quantum field theory are calculated using Feynman amplitudes, which are certain finite dimensional integrals associated to graphs. Recall that a (massless) Feynman graph $(G, \p)$ consists of a finite graph $G=(V,E)$, with vertex and edge sets $V$ and $E$, respectively, together with a collection of external momenta $\underline \p=(\ps_v)_{v \in V}$, $\ps_v \in \R^D$, such that $\sum_{v\in V} \ps_v=0$. To the Feynman graph $(G, \underline \p)$ one associates two polynomials in the variables $\underline{Y}=(Y_e)_{e \in E}$. The first Symanzik $\psi_G$, which depends only on the graph $G$, is given by the following sum over the spanning trees of $G$:
$$
\psi_G(\underline{Y})=\sum_{T \subseteq G} \prod_{e \notin T} Y_e.
$$ The second Symanzik polynomial $\phi_G$, depending on the external momenta as well, admits the expression 
$$
\phi_G(\underline \p, \underline{Y})=\sum_{F \subseteq G} q(F) \prod_{e \notin F} Y_e.
$$ Here $F$ runs through the spanning $2$-forests of $G$, and $q(F)$ is the real number $-\langle \p_{F_1}, \p_{F_2}\rangle$, where $\p_{F_1}$ and $\p_{F_2}$ denote the total momentum entering  the two connected  components $F_1$ and $F_2$ of ${F}$. The polynomial $\phi_G$ is quadratic in $\underline \p$ and it will be also convenient to consider the corresponding bilinear form, which we denote by $\phi_G(\underline \p, \underline \p', \underline{Y})$.  

\smallskip

One of the various representations of the Feynman amplitude associated to $(G, \underline \p)$ is, up to some elementary factors which we omit, 
\begin{equation}\label{feynman-int}
I_G(\underline \p) =\int_{[0,\infty]^E}\exp(-i\,\phi_G/\psi_G)\,\, d\pi_G,
\end{equation} where $d\pi_G$ denotes the volume form $\psi_G^{-D/2} \prod_E dY_e$ on $[0,\infty]^E$. This can be found e.g. in formula (6-89) of \cite{IZ}.
If one interprets the locus of integration as the space of metrics (i.e. lengths of edges) on $G$, then \eqref{feynman-int} looks like a path integral with the action $\phi_G/\psi_G$.

\smallskip

Although both amplitudes diverge in general, one may still ask if
$I_G(\underline \p)$ is related to the asymptotic of
$A_{\alpha'}(g, \underline \p)$ when $\alpha'$ goes to zero, as
physics suggests.  The graph $G$ appears as the dual graph of a stable
curve $C_0$ with $n$ marked points lying on the boundary of the
Deligne-Mumford compactification $\overline{\mathcal M}_{g, n}$
(recall that the irreducible components $X_v$ of $C_0$ are indexed by
the vertices of $G$, whereas the singular points correspond to the
edges).  The question can be then split into two different problems,
namely (i) the convergence of the integrands, and (ii) the convergence
of the measure $\nu_{g,n}$, in an appropriate sense, and along the
boundary of $\overline{\mathcal M}_{g, n}$, to a linear combination of
the measures $\pi_G$ for the (marked) dual graphs $G$ associated to
the~strata. Our main result in this paper answers question (i) in the affirmative
when the external momenta satisfy the ``on shell'' condition: the integrand in string theory converges indeed
to the integrand appearing in the Feynman amplitude. 

\smallskip

To make this more
precise, let us consider the versal analytic deformation $\pi \colon
\mathcal{C}' \to S'$ of the marked curve
$C_0$, which we think of as a smooth neighborhood of $C_0$ in the
analytic stack $\overline{\mathcal{M}}_{g,n}$. Here $S'$ is a polydisc of
dimension $3g-3+n$, the total space $\mathcal{C}'$ is regular and
$\mathcal{C}_0'$, the fibre of $\pi$ at $0$, is isomorphic to
$C_0$. For each edge $e \in E$, let $D_e \subset S'$ denote the divisor
parametrizing those deformations in which the point associated to $e$
remains singular. Then $D=\bigcup_{e \in E} D_e$ is a normal crossings
divisor whose complement $U'=S' \setminus D$ can be identified with
$(\Delta^\ast)^E \times \Delta^{3g-3-|E|+n}$. Over $U'$, the fibres
$\mathcal{C}'_s$ are smooth curves of genus $g$. Moreover, the versal family comes together with $n$ disjoint sections $\sigma
_{i}\colon S'\to \sC'$ which do not meet the double points of
$C_{0}$. We denote  by $\underline \p^G = 
(\ps_v^G)_{v\in V}$ the \emph{restriction} of $\underline \p$ to
$G$. By this we mean that, for each vertex $v \in V$, the external
momentum $\p_v^G$ is obtained by summing those $\p_i$ associated to
the sections $\sigma_i$ which meet $C_0$ on the irreducible
component $X_v$. 

\smallskip

An \textit{admissible segment} is a continuous maps $\underline t
\colon [0, \varepsilon] \to S'$ from an interval of length
$\varepsilon>0$ such that $\underline t((0, \varepsilon]) \in U'$ and, letting $t_e$ denote the coordinate
corresponding to $e\in E$ in the factor $(\Delta^\ast)^E$ of $U'$, the limit
$\lim_{\alpha' \to 0} |t_e(\alpha')|^{\alpha'}$ exists and belongs to
$(0, 1)$. To any admissible segment we attach a collection $\underline{Y}=(Y_e)_{e\in E}$ of positive real
numbers (the edge lengths) as follows: 
$$
Y_e=-\lim_{\alpha' \to 0} \log |t_e(\alpha')|^{\alpha'}. 
$$

\begin{thm}[cf. Theorem \ref{thm:5}]\label{thm:mainintro} Let $C_0$ be
  a stable curve of genus $g \geq 1$ with $n$ marked points $\sigma_1, \dots,
  \sigma_n$ and dual graph $G=(V,E)$, and let $\underline \p =
  (\ps_1,\dots,\ps_n)$ be a collection of external momenta 
  satisfying the conservation law $\sum_{i=1}^n \p_{i}=0$ and the ``on shell''
  condition $\langle \p_{i}, \p_{i} \rangle=0$ for all $i$. Then, for any admissible
  segment $\underline t \colon I \to \overline{\mathcal{M}}_{g, n}$ such
  that $\underline t(0)=[C_0, \sigma_1, \dots, \sigma_n]$, we have 
  $$
  \lim_{\alpha' \to 0} \alpha'
  \sF(\underline t(\alpha'))=\frac{\phi_G(\underline \p^G,
    \underline{Y})}{\psi_G(\underline{Y})}, 
  $$
  where $\underline Y=(Y_e)_{e \in E}$ denotes the edge lengths
  determined by $\underline t$.
\end{thm}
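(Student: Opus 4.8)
The plan is to express the function $\mathcal F$, evaluated along the fibres of the versal family, in terms of the archimedean height pairing between degree-zero divisors; to feed this into the paper's asymptotic formula for that pairing under a normal-crossings degeneration (the output of the nilpotent orbit theorem); and to finish with the classical identification of the resulting combinatorial quantity with $\phi_{G}/\psi_{G}$. Concretely, I would first fix a basis $e_{1},\dots,e_{D}$ of $\R^{D}$ orthogonal for the Minkowski form, with $\langle e_{k},e_{k}\rangle=\varepsilon_{k}\in\{\pm1\}$, and for $s\in U'$ put $D^{(k)}_{s}=\sum_{i=1}^{n}(\p_{i})_{k}\,\sigma_{i}(s)$; by the conservation law $\sum_{i}\p_{i}=0$ this is a degree-zero divisor on $\sC'_{s}$. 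Expanding $\langle\p_{i},\p_{j}\rangle=\sum_{k}\varepsilon_{k}(\p_{i})_{k}(\p_{j})_{k}$ yields $\mathcal F([\sC'_{s},\sigma_{1}(s),\dots,\sigma_{n}(s)])=\sum_{k}\varepsilon_{k}\langle D^{(k)}_{s},D^{(k)}_{s}\rangle$, each self-pairing taken with the regularization $\green'_{\Ar}$ on the diagonal. The on-shell hypothesis enters here: the diagonal ($i=j$) part of this identity is weighted by $\sum_{k}\varepsilon_{k}(\p_{i})_{k}^{2}=\langle\p_{i},\p_{i}\rangle=0$, so $\mathcal F$ never sees the regularized diagonal values $\green'_{\Ar,\sC'_{s}}(\sigma_{i}(s),\sigma_{i}(s))$ — whose degeneration is governed by the Arakelov metric on $\Omega^{1}_{C}$, and not by the combinatorics of $G$.

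Next I would degenerate. As the sections extend over all of $S'$, each $D^{(k)}$ is the restriction of a relative divisor on $\sC'\to S'$; its specialization to $C_{0}$ has, on the dual graph, the divisor $d^{(k)}=\sum_{v\in V}(\p_{v}^{G})_{k}\,(v)$, the coefficient at $v$ being the sum of the $(\p_{i})_{k}$ over the sections meeting $X_{v}$ — this is exactly the definition of the restricted momenta $\underline\p^{G}$. Applying the asymptotics of the height pairing to the degree-zero divisor $D^{(k)}$ gives, for $s\in U'$ near the origin, $\langle D^{(k)}_{s},D^{(k)}_{s}\rangle=\green_{G,\underline\ell(s)}(d^{(k)},d^{(k)})+O(1)$, where $\ell_{e}(s)=-\log|t_{e}(s)|$, $\green_{G,\underline\ell}$ denotes the canonical Green function of the metric graph $(G,\underline\ell)$ and $\green_{G,\underline\ell}(d,d')$ its bilinear extension to divisors, and the error remains bounded as the non-degenerating coordinates of $U'$ also tend to $0$. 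Along an admissible segment one has $\alpha'\ell_{e}(\alpha')=-\alpha'\log|t_{e}(\alpha')|\to Y_{e}\in(0,\infty)$, and since $\underline\ell\mapsto\green_{G,\underline\ell}(d,d')$ is continuous and homogeneous of degree one, $\alpha'\green_{G,\underline\ell(\alpha')}(d^{(k)},d^{(k)})=\green_{G,\alpha'\underline\ell(\alpha')}(d^{(k)},d^{(k)})\to\green_{G,\underline Y}(d^{(k)},d^{(k)})$, while $\alpha'\cdot O(1)\to0$. Summing over $k$ with weights $\varepsilon_{k}$ and using the first step, $\lim_{\alpha'\to0}\alpha'\mathcal F(\underline t(\alpha'))=\sum_{v,w\in V}\langle\p_{v}^{G},\p_{w}^{G}\rangle\,\green_{G,\underline Y}(v,w)$.

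Finally I would identify this sum with $\phi_{G}(\underline\p^{G},\underline Y)/\psi_{G}(\underline Y)$. Because $\sum_{v}\p_{v}^{G}=\sum_{i}\p_{i}=0$, the diagonal terms drop and this pairing is independent of the probability measure normalizing $\green_{G,\underline Y}$, equalling $-\tfrac{1}{2}\sum_{v,w}\langle\p_{v}^{G},\p_{w}^{G}\rangle\,r_{G,\underline Y}(v,w)$ with $r_{G,\underline Y}$ the effective-resistance metric obtained by regarding the edge lengths $Y_{e}$ as resistances; viewing $\underline\p^{G}$ as a current source at the vertices, this is the energy dissipated by the induced harmonic current, and its expression through sums over spanning trees and spanning $2$-forests is the matrix-tree theorem, that is, precisely $\phi_{G}(\underline\p^{G},\underline Y)/\psi_{G}(\underline Y)$; both this and the homogeneity used above are classical. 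The main obstacle is therefore not in this reduction — which is bookkeeping plus two standard facts — but in its input: the asymptotics of the height pairing under a normal-crossings degeneration, with leading term exactly $\green_{G,-\log|t_{\bullet}|}$ evaluated on the graph specializations of the divisors. This is the result proved in the paper via the nilpotent orbit theorem, and the care needed at the present stage is only to match normalizations, so that the edge lengths $Y_{e}$ read off from an admissible segment are precisely those entering that leading term, and to check that its bounded remainder stays bounded as one approaches $C_{0}$ within its stratum.
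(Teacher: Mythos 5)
Your overall architecture is the one the paper follows: write $\sF$ in terms of the archimedean height pairing, feed in the degeneration asymptotics coming from the nilpotent orbit theorem, use the on-shell condition to neutralize the dependence on the regularization, and conclude by homogeneity of $\phi_G/\psi_G$ along an admissible segment. Your final identification of the leading term with the energy (effective-resistance) pairing of the metric graph is a perfectly acceptable alternative to the paper's route through \eqref{eq:21} and Propositions \ref{prop:3} and \ref{prop:second}, and your observation that the on-shell condition kills the diagonal terms, hence makes the quantity independent of the chosen metric, is correct (it is the content of Corollary \ref{cor55} and the unnumbered corollary preceding it).

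There is, however, a genuine gap at the central analytic step. The asymptotic statement actually delivered by the nilpotent orbit theorem (Theorem \ref{thm:main}, Corollary \ref{cor:1}, extended by Remarks \ref{rem:1} and \ref{rem:multisec}) concerns a pair of degree-zero divisors supported on \emph{disjoint} sections. You invoke it for the self-pairing $\langle D^{(k)}_s, D^{(k)}_s\rangle$, equivalently --- after discarding the diagonal terms --- for the off-diagonal sum $\sum_{i\neq j}\langle \p_i,\p_j\rangle\,\green_{\mu_{\Ar},s}(\sigma_i(s),\sigma_j(s))$. This quantity is not the height pairing of two divisors with disjoint support (there is no admissible chain with boundary $D^{(k)}_s$ avoiding $|D^{(k)}_s|$), so Corollary \ref{cor:1} does not apply to it directly; and while the on-shell condition makes it metric-independent fibre by fibre, that says nothing about its behaviour as $s$ degenerates. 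Bridging this is exactly the content of Theorem \ref{thm:8} and Corollary \ref{cor55}: one chooses an auxiliary $(1,1)$-form $\mu$ that extends \emph{continuously} to $\sC'$, uses the moving lemma to find $f$ with $\oA+\mathrm{div}(f)$ disjoint from $\oA$ and from the singular locus so that the disjoint-divisor asymptotics applies to $\langle \oA,\oA+\mathrm{div}(f)\rangle$, deforms the sections to $\sigma_i^u$ and compares the regularized self-pairing with $\langle \oA^u,\oA+\mathrm{div}(f)\rangle-\log|f(\oA^u)|-\sum_i\p_i^2\log d_\mu(\sigma_i^u,\sigma_i)$, the continuity of $\mu$ being what makes the resulting error bounded; only afterwards does the on-shell condition allow replacing $\mu$ by $\mu_{\Ar}$, which does \emph{not} extend continuously (it has logarithmic singularities along the boundary, cf.\ \cite{dJ}), so this transfer cannot be skipped. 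Your closing assessment that what remains is only ``matching normalizations'' and checking that the bounded remainder stays bounded underestimates precisely this step, which is where the real work of Section \ref{sec:conv-integr} lies.
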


\smallskip

We derive the above theorem from results describing the asymptotic
behaviour of the archimedean height pairing. We start with the case of
disjoint divisors. We consider
the versal analytic deformation $\pi \colon \mathcal{C} \to S$ of
$C_0$ (without the marked points), which we think of as a smooth
neighborhood of $C_0$ in the 
analytic stack $\overline{\mathcal{M}}_{g}$. Now $S$ is a polydisc of
dimension $3g-3$. Again the total space $\mathcal{C}$ is regular and
we repeat the construction above letting $D_e \subset S$ denote the divisor
parametrizing those deformations in which the point associated to $e$
remains singular. Then $D=\bigcup_{e \in E} D_e$ is a normal crossings
divisor whose complement $U=S \setminus D$ can be identified with
$(\Delta^\ast)^E \times \Delta^{3g-3-|E|}$. To accommodate
external momenta, we assume that we are given two collections of sections of $\pi$, which we denote by 
$\sigma_1=(\sigma_{\ell, 1})_{\ell=1, \ldots, n}$ and $\sigma_2=(\sigma_{\ell, 2})_{\ell=1, \ldots, n}$. Since $\mathcal{C}$ is regular, the
points $\sigma_{\rl,i}(0)$ lie on the smooth locus of $C_0$.  We label
the markings with two vectors
$\underline \p_1=(\ps_{\rl,1})_{\rl=1}^n$ and
$\underline \p_2=(\ps_{\rl,2})_{\rl=1}^n$ with $\ps_{\rl,i} \in \R^D$
subject to the conservation of
momentum, 
thus obtaining a pair of relative degree zero $\R^D$-valued divisors
$$
\mathfrak A_s = \sum_{\rl=1}^n \p_{\rl, 1} \sigma_{\rl, 1}, \qquad \mathfrak
B_s = \sum_{\rl=1}^n \p_{\rl,2} \sigma_{\rl,2}.
$$

\smallskip

We first assume that $\sigma_1$ and $\sigma_2$
are \textit{disjoint} on each fiber of $\pi$. 
Recall that to any pair $\mathfrak A$, $\mathfrak B$ of degree zero
(integer-valued) divisors with disjoint support on a smooth projective
complex curve $C$, one associates a real number, the
\textit{archimedean height}
\begin{equation*}
\langle \mathfrak A, \mathfrak B \rangle=\mathrm{Re}(\int_{\gamma_{\mathfrak{B}}} \omega_{\mathfrak A}), 
\end{equation*} by integrating a canonical logarithmic differential
$\omega_{\mathfrak{A}}$ with residue $\mathfrak{A}$ along any
$1$-chain $\gamma_{\mathfrak{B}}$ supported on $C \setminus
|\mathfrak{A}|$ and having boundary $\mathfrak B$. Coupling with the
Minkowski bilinear form on $\R^D$, the definition extends to
$\R^D$-valued divisors. We thus get a real-valued function 
\begin{equation}\label{eq:3}
s \mapsto \langle \mathfrak A_s, \mathfrak B_s \rangle. 
\end{equation}  

\smallskip

For each $e\in E$, denote by $s_e$ the coordinate in the factor corresponding to $e$ in $U=(\Delta^\ast)^E \times \Delta^{3g-3-|E|}$, write $y_{e}=\frac{-1}{2\pi }\log|s_{e}|$ and put
$\underline y=(y_{e})_{e\in E}$. After shrinking $U$ if necessary, 
the asymptotic of the height pairing is given by the following result:

\begin{thm}[cf. Corollary \ref{cor:1}] \label{thm:6} Assume, as above, that $\sigma_1$ and $\sigma_2$ are disjoint on each fibre. Then there exists a bounded function $h\colon U\
  \to \R$ such that 
  \begin{equation}\label{eq:22}
    \langle \oA_{s},\oB_{s}\rangle=
    2\pi \frac{\phi_G(\underline \p_1^G, 
  \underline \p_2^G,\underline y)}{\psi_G(\underline y)}
+h(s). 
  \end{equation}
\end{thm}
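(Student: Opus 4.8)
The idea is first to write the pairing as an integral and see geometrically where the divergence comes from, and then to make this precise and uniform by Hodge theory. One has $\langle\oA_s,\oB_s\rangle=\mathrm{Re}\int_{\gamma}\omega_{\oA_s}$ for any $1$-chain $\gamma$ on $\mathcal C_s\setminus|\oA_s|$ with $\partial\gamma=\oB_s$, where $\omega_{\oA_s}$ is the canonical logarithmic differential with residues $\oA_s$. Since the sections $\sigma_{\rl,i}$ avoid the nodes of $C_0$, the divisors $\oA_s$, $\oB_s$ extend to relative divisors over $S$ and $\omega_{\oA_s}$ extends to a relative logarithmic differential on $\mathcal C/S$; near the node attached to an edge $e$, in a local model $\{xy=s_e\}$, the Laurent coefficient of $\omega_{\oA_s}$ along the vanishing loop equals the net degree of $\oA_s$ on one side of the neck, so on the degenerating annulus one has $\omega_{\oA_s}=(\text{net degree})\tfrac{dx}{x}+(\text{bounded})$. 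Running $\gamma$ through the graph, the crossing of the neck at $e$ contributes a term of size $\log|s_e|$; summing over the nodes and recognising the resulting crossing numbers as the harmonic flow of $\oB^G$ on the metric graph $(G,\underline y)$, one is led to the leading term $\phi_G(\underline\p_1^G,\underline\p_2^G,\underline y)/\psi_G(\underline y)$. What this heuristic leaves open is that $\gamma$ is not canonical, that $\omega_{\oA_s}$ must be normalised for $\mathrm{Re}\int_{\gamma}\omega_{\oA_s}$ to be well defined, and --- crucially --- that the ``bounded'' remainders are genuinely bounded, uniformly on $U$.

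To carry this out rigorously I would pass to Hodge theory. The archimedean height $\langle\oA_s,\oB_s\rangle$ is the height attached to the biextension mixed Hodge structure of the pair $(\oA_s,\oB_s)$: a $\Z$-mixed Hodge structure $B_s$ with $\gr^W_{-1}B_s=\Z(1)$, $\gr^W_0 B_s$ a Tate twist of $H^1(\mathcal C_s;\Z)$ and $\gr^W_1 B_s=\Z(0)$, whose two extension classes are the Abel--Jacobi images of $\oA_s$ and $\oB_s$. As $s$ varies this is an admissible variation of mixed Hodge structure over $U$ with unipotent monodromy, whose monodromy logarithms $N_e$ act on the $\gr^W_0$-piece through the Picard--Lefschetz formula for the vanishing cycle of $e$. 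Applying the several--variable nilpotent orbit theorem for admissible variations (after Cattani--Kaplan--Schmid and Kashiwara) gives a decomposition of the height into the period of the limiting nilpotent orbit --- an explicit function of the $y_e=-\tfrac{1}{2\pi}\log|s_e|$ and of the limit data --- plus a term bounded on $U$ after shrinking. The limit data are the $N_e$ and the limits of the two extension classes, which, because the sections meet $C_0$ on the components $X_v$, are the Abel--Jacobi images on $(G,\underline y)$ of the vertex-supported degree-zero divisors $\oA^G$, $\oB^G$ obtained by restricting $\oA$, $\oB$ to $G$ and carrying the momenta $\underline\p_1^G$, $\underline\p_2^G$. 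Computing the period of this limiting orbit is then a matter of the linear algebra of the weight filtration: the form $\sum_e y_e N_e$ is positive definite on the graded piece $\gr^W$, which is identified with $H_1(G;\R)$ and on which, in edge coordinates, it is the weighted cycle Laplacian $\underline\gamma\mapsto\sum_e y_e\gamma_e^2$ of $(G,\underline y)$, while on the complementary directions it stays bounded; taking the Schur complement with respect to this splitting shows the period of the limit orbit equals $2\pi$ times the Green's function of $(G,\underline y)$ evaluated on $\oA^G$ and $\oB^G$.

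It remains to identify this graph Green's function with the Symanzik ratio. For degree-zero divisors supported on vertices, the Green pairing on a metric graph is computed by Kirchhoff's theorem and the weighted matrix--tree theorem, and applied to $\oA^G$, $\oB^G$ it is exactly $\phi_G(\underline\p_1^G,\underline\p_2^G,\underline y)/\psi_G(\underline y)$ --- this is, in essence, the combinatorial identity that the second Symanzik polynomial is designed to encode, with the denominator $\psi_G(\underline y)$ arising as the determinant of the weighted cycle Laplacian over a $\Z$-basis of $H_1(G;\Z)$. Collecting all the bounded contributions into a single function $h$ then yields \eqref{eq:22}. The main obstacle is precisely this uniform estimate: one must show that the difference between the true height and its nilpotent-orbit approximation, together with the honestly bounded pieces of that approximation --- the holomorphic correction to the period map, the components of the extension data transverse to the ``large'' directions, and the off-diagonal Schur-complement terms coming from the degenerate form $\sum_e y_e N_e$ --- all remain bounded as $s$ ranges over $U$ after shrinking, in particular also as the $3g-3-|E|$ coordinates that do not approach the boundary vary, while simultaneously the leading part is matched exactly with the graph Green's function. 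This interplay between the asymptotic linear algebra of the weight filtration and the matrix--tree identity is where the real work lies.
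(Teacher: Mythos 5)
Your overall strategy coincides with the paper's: interpret $\langle \oA_{s},\oB_{s}\rangle$ as the value of the invariant metric on the biextension mixed Hodge structure (Proposition \ref{height-pm}), regard the family of biextensions as an admissible variation over $U$, compute the monodromy logarithms $N_e$ by Picard--Lefschetz, invoke the nilpotent orbit theorem, and extract the leading term by a Schur-complement computation in which $\sum_e y_e M_e$ is the weighted cycle form on $H_1(G,\R)$. The only packaging difference is at the very end: you identify the leading term with the Symanzik ratio via the graph Green's function and the matrix--tree theorem, whereas the paper gets it directly from the determinantal definition of $\phi_G/\psi_G$ through \eqref{eq:21} and Proposition \ref{prop:3}; these are equivalent. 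Two small corrections: the paper first reduces by bilinearity to integer-valued divisors, which is not merely cosmetic --- for $\R^D$-valued momenta the monodromy is not integral and the period map does not descend to $U$, so the reduction is what makes the biextension/Poincar\'e-bundle picture over $U$ available; and the biextension weights are $-2,-1,0$ (with $\gr^W_{-2}=\Z(1)$, $\gr^W_0=\Z(0)$), not $-1,0,1$.

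There is, however, a genuine gap, and it is exactly the point you concede at the end. The assertion that the nilpotent orbit theorem ``gives a decomposition of the height into the period of the limiting nilpotent orbit plus a term bounded on $U$'' is not something the nilpotent orbit theorem delivers. What it gives (Theorem \ref{thm:3}) is only that the untwisted period map $\Psi$ extends holomorphically over $S$ and that twisting back lands in $\widetilde X$ for $\Im(z_e)$ large. Feeding this into the explicit formula for the invariant metric on the Poincar\'e bundle (Theorem \ref{thm:2}) produces the exact expression of Proposition \ref{prop:2}, which involves the inverse of the matrix $\Im(\Omega_0)+\sum_e y'_e\widetilde M_e$; the heart of the theorem is then the claim that replacing this matrix and the accompanying vectors by their purely combinatorial parts $\sum_e y_e M_e$, etc., changes the Schur-complement expression only by a function bounded uniformly on $U$. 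This is delicate because $\sum_e y_e M_e$ can degenerate at different rates in different directions while the bounded holomorphic corrections are pushed through its inverse; the paper closes this step by recognizing the expression as a normlike function and applying \cite[Theorem 3.2~(1)]{BdJH} (after a symmetrization trick) in Theorem \ref{thm:4}. Your proposal names this uniform estimate as ``where the real work lies'' but supplies no argument for it, so the boundedness of $h$ --- which is the actual content of the statement --- is left unproved.
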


Theorem \ref{thm:6} only deals with disjoint sections. In order to
derive Theorem \ref{thm:mainintro} we need to allow the supports of the divisors to intersect, which requires a regularization of the height pairing. Pointwise, the regularization depends on the choice of a metric on the tangent space of the given curve. To regularize
the height pairing globally we choose a smooth $(1,1)$-form $\mu$  on
$\pi ^{-1}(U)$ such that the restriction to each fibre $\mathcal{C}_{s}$ is positive. Using $\mu$ we define a regularized height pairing \begin{math}
  \langle \oA_{s},\oA_{s}\rangle^{'}_{\mu }
\end{math} (see Section \ref{sec:conv-integr}). In the case where $g \geq 1$ and $\mu$ is the Arakelov metric $\mu _{\Ar}$ we recover the
function $\sF$ from the string amplitude: 
\begin{displaymath}
  \sF([\sC_{s},\sigma _{1}(s),\dots,\sigma _{n}(s)])=\langle \oA_{s},\oA_{s}\rangle^{'}_{\mu _{\Ar}}.
\end{displaymath}

\smallskip

The asymptotic of the regularized height pairing is described in the
following result: 

\begin{thm}[cf. Theorem \ref{thm:8} and Corollary \ref{cor55}]\label{thm:7} Let $\underline \p=(\ps_i)_{i=1, \ldots, n}$ be external momenta satisfying the conservation law and $(\sigma_i)_{i=1, \ldots, n}$ a collection of sections $\sigma_i \colon S \to \mathcal{C}$. Put $\oA_{s}=\sum
  \p_{i}\sigma _{i}(s)$, and let $\mu $ be a smooth $(1,1)$-form on
  $\pi^{-1}(U)$ whose restriction to each curve $\mathcal{C}_{s}$ is
  positive. Assume that one of the following conditions hold: 
  \begin{enumerate}
  \item $\mu $ extends to a continuous $(1,1)$-form on $\sC$, or  
  \item the $\ps_i$ satisfy the ``on shell'' condition
    $\langle \p_{i}, \p_{i} \rangle=0$. 
  \end{enumerate}

  Then there exists a bounded function $h\colon U\to \R$ such that
  \begin{displaymath}
    \langle \oA_{s},\oA_{s}\rangle^{'}_{\mu}=2\pi
    \frac{\phi_G(\underline \p^G,\underline y)}{\psi_G(\underline y)}
    +h(s).  
  \end{displaymath}
\end{thm}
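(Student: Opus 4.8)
The plan is to bootstrap from the disjoint case, Theorem~\ref{thm:6}, by ``doubling'' the sections and carefully accounting for the diagonal self‑energy; the two hypotheses will serve exactly to control that self‑energy. Unwinding the definition of the regularized pairing, write
\begin{equation*}
\langle \oA_{s},\oA_{s}\rangle'_{\mu}=\sum_{i\ne j}\lr{\p_{i}}{\p_{j}}\, g_{\mu,s}(\sigma_{i}(s),\sigma_{j}(s))+\sum_{i}\lr{\p_{i}}{\p_{i}}\, g^{\circ}_{\mu,s}(\sigma_{i}(s)),
\end{equation*}
where $g_{\mu,s}$ is the Green function on $\mathcal{C}_{s}$ attached to $\mu$ and $g^{\circ}_{\mu,s}$ its $\mu$‑regularized diagonal value, characterised by $g_{\mu,s}(P,Q)=g^{\circ}_{\mu,s}(P)+\log|z_{P}(Q)|+\lambda_{\mu,s}(P)+o(1)$ as $Q\to P$, with $\lambda_{\mu,s}(P)=\tfrac12\log\|\partial_{z_{P}}\|^{2}_{\mu,s}(P)$ the logarithmic norm of a tangent vector. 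We may assume the $\sigma_{i}(0)$ pairwise distinct and contained in the smooth locus of $C_{0}$; after shrinking $U$ the $\sigma_{i}$ are then pairwise disjoint over $U$, so every term above is well defined.

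The first step is a reduction in $\mu$. Since $\sum_{i}\p_{i}=0$, one checks by a short computation that all contributions of the $\mu$‑dependent normalisation of the Green function cancel in $\langle \oA_{s},\oA_{s}\rangle'_{\mu}$ (because $\oA_s$ has degree zero), so that passing from $\mu$ to another admissible $\mu'$ changes it only by $-\sum_{i}\lr{\p_{i}}{\p_{i}}\bigl(\lambda_{\mu,s}(\sigma_{i}(s))-\lambda_{\mu',s}(\sigma_{i}(s))\bigr)$. Under hypothesis~(2) this is identically zero, so the regularized self‑pairing is independent of $\mu$ and we may take $\mu$ to be the restriction of a fixed Kähler form on $\mathcal{C}$, which satisfies hypothesis~(1). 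We are therefore reduced to proving the statement under~(1).

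Now introduce a second family of sections $\sigma'_{i}$, each a generic small perturbation of $\sigma_{i}$ lying in the same irreducible component $X_{v(i)}$ at $s=0$, chosen so that $\oB_{s}:=\sum_{i}\p_{i}\sigma'_{i}(s)$ has support disjoint from that of $\oA_{s}$ over a shrunken $U$. Since $\sigma'_{i}(0)$ sits on the same component as $\sigma_{i}(0)$, the divisor $\oB$ also restricts to $\underline{\p}^{G}$ on $G$, so Theorem~\ref{thm:6} gives $\langle \oA_{s},\oB_{s}\rangle=2\pi\,\phi_{G}(\underline{\p}^{G},\underline y)/\psi_{G}(\underline y)+O(1)$. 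Subtracting the expansion of $\langle \oA_{s},\oA_{s}\rangle'_{\mu}$ term by term,
\begin{multline*}
\langle \oA_{s},\oB_{s}\rangle-\langle \oA_{s},\oA_{s}\rangle'_{\mu}
=\sum_{i\ne j}\lr{\p_{i}}{\p_{j}}\bigl(g_{\mu,s}(\sigma_{i}(s),\sigma'_{j}(s))-g_{\mu,s}(\sigma_{i}(s),\sigma_{j}(s))\bigr)\\
+\sum_{i}\lr{\p_{i}}{\p_{i}}\bigl(g_{\mu,s}(\sigma_{i}(s),\sigma'_{i}(s))-g^{\circ}_{\mu,s}(\sigma_{i}(s))\bigr).
\end{multline*}
The key input is a uniform ``local triviality'' of the Green function near the fixed smooth points of $C_{0}$: although there $g_{\mu,s}$ globally grows like the length scale $\sim|\log|s_{e}||$ of the dual metric graph, on a fixed smooth neighbourhood its oscillation is $O(1)$ uniformly in $s$. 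Granting this, each difference $g_{\mu,s}(\sigma_{i}(s),\sigma'_{j}(s))-g_{\mu,s}(\sigma_{i}(s),\sigma_{j}(s))$ with $i\ne j$ is $O(1)$, and $g_{\mu,s}(\sigma_{i}(s),\sigma'_{i}(s))-g^{\circ}_{\mu,s}(\sigma_{i}(s))=\lambda_{\mu,s}(\sigma_{i}(s))+O(1)$; hence
\begin{equation*}
\langle \oA_{s},\oA_{s}\rangle'_{\mu}=2\pi\,\frac{\phi_{G}(\underline{\p}^{G},\underline y)}{\psi_{G}(\underline y)}-\sum_{i}\lr{\p_{i}}{\p_{i}}\,\lambda_{\mu,s}(\sigma_{i}(s))+O(1).
\end{equation*}
Since $\mu$ extends continuously, and non‑degenerately at the smooth points $\sigma_{i}(0)$, over $\mathcal{C}$, the terms $\lambda_{\mu,s}(\sigma_{i}(s))$ remain bounded as $s\to 0$; under hypothesis~(2) the coefficients $\lr{\p_{i}}{\p_{i}}$ vanish outright. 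In either case the middle sum is $O(1)$, and taking $h$ to be the bounded remainder completes the proof after a final shrinking of $U$.

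I expect the hard part to be the uniform local control of $g_{\mu,s}$ near the fixed smooth points as the curve degenerates (the $O(1)$‑oscillation statement and the comparison of $g_{\mu,s}$ near the diagonal with $g^{\circ}_{\mu,s}$, both uniform in $s$): since $g_{\mu,s}$ is a global object of size $\sim|\log|s_{e}||$, this is not an elementary potential‑theoretic estimate but should follow from the same nilpotent‑orbit / Hodge‑theoretic analysis that underlies Theorem~\ref{thm:6}, giving an asymptotic expansion of $g_{\mu,s}(P(s),Q(s))$ that is uniform for $P,Q$ in fixed compact subsets of the smooth locus, including up to the diagonal. This is precisely where the continuity of $\mu$ over $\mathcal{C}$ is used in case~(1); in case~(2) the preliminary $\mu$‑independence reduction is what is needed, since for a metric degenerating along the boundary such as $\mu_{\Ar}$ the term $\lambda_{\mu,s}(\sigma_{i}(s))$ would contribute a genuinely unbounded $\log|\log|s||$ correction.
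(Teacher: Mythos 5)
Your reduction of case (2) to case (1) is fine and is essentially the paper's: under the conservation law a change of $\mu$ only affects the diagonal terms, which carry the coefficients $\lr{\p_i}{\p_i}$, so on shell the regularized pairing is $\mu$-independent and one may take $\mu$ continuous on $\sC$ (this is how Corollary \ref{cor55} is deduced from Theorem \ref{thm:8}). The gap is in your proof of case (1). Everything rests on the ``uniform local triviality'' of the fibrewise Green function: that on a fixed neighbourhood of the smooth points $\sigma_i(0)$ the oscillation of $\green_{\mu_s}$ is $O(1)$ uniformly as $s\to 0$, including up to the diagonal, so that $\green_{\mu_s}(\sigma_i(s),\sigma'_j(s))-\green_{\mu_s}(\sigma_i(s),\sigma_j(s))=O(1)$ and the analogous comparison with the regularized diagonal value holds. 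You state this, correctly flag it as the hard part, and do not prove it — and it does not follow formally from Theorem \ref{thm:6}. Single differences $\green_{\mu_s}(x,y')-\green_{\mu_s}(x,y)$ with one argument fixed are not height pairings (only double differences are), so they are sensitive to the normalization attached to $\mu$ and are not controlled by Corollary \ref{cor:1}; and a direct potential-theoretic argument fails because $\green_{\mu_s}$ admits no uniform bound as $s\to0$, so gradient or Harnack-type estimates would need oscillation bounds on larger sets, which is circular. The missing lemma is a uniform boundary asymptotic for the two-point Green function, a statement of essentially the same depth as the theorem you are trying to prove, and would require redoing the Hodge-theoretic analysis for the Green function itself rather than quoting Theorem \ref{thm:6}.

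The paper's proof of Theorem \ref{thm:8} avoids this analytic input altogether. By the moving lemma one picks a rational function $f$ on $\sC$ with $\oA+\mathrm{div}(f)$ disjoint from the singular points, the markings and $\oA$; since $\mathrm{div}(f)$ restricts to zero momentum on $G$, Corollary \ref{cor:1} together with Remarks \ref{rem:1} and \ref{rem:multisec} (the latter to handle the multisections occurring in $\mathrm{div}(f)$) gives the asymptotics of $\langle\oA_s,\oA_s+\mathrm{div}(f)\rangle$. The regularized self-pairing is then expressed as $\lim_{u\to0}\bigl(\langle\oA^u_s,\oA_s\rangle-\sum_i\p_i^2\log d_{\mu}(\sigma_i^u,\sigma_i)\bigr)$ for the fibrewise translates $\sigma_i^u=\{\pi_i=u\}$, and the key trick is Example \ref{ex:heigh-div}: $\langle\oA^u,\oA\rangle=\langle\oA^u,\oA+\mathrm{div}(f)\rangle-\log|f(\oA^u)|$, where the explicit local form $f=\pi_i^{-\p_i}v_i$ gives $\log|f(\oA^u)|=-\sum_i\p_i^2\log|u|+O(1)$. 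The $\log|u|$ divergence cancels against $\log d_{\mu}(\sigma_i^u,\sigma_i)=\log|u|+O(1)$, and this cancellation is the only place continuity of $\mu$ on $\sC$ enters. Thus all the $s$-dependence of the main term sits in a genuine disjoint-support pairing, and no uniform control of $\green_{\mu_s}$ near the diagonal is needed. To salvage your route you would either have to prove your oscillation lemma (substantially more than Theorem \ref{thm:6} provides), or replace your perturbed sections $\sigma'_i$ by a rationally equivalent divisor as above, so that the error you must control becomes the explicit function $\log|f|$.
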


\smallskip

To get Theorem \ref{thm:mainintro} from Theorem~\ref{thm:7} we first
observe that the latter can be easily extended to the versal family
$\sC'\to S'$ of
genus $g$ curves with $n$ marked points. For any admissible segment
$\underline t\colon I\to U'$, we have 
\begin{align*}\label{1.3formula}
 \lim_{\alpha' \to 0} \alpha' \sF(\underline t(\alpha'))
  &= 
    \lim_{\alpha' \to 0}\left[\frac{\phi_G(\underline \p^G,(-\log
    |t_e(\alpha')|^{\alpha'})_{e})}{\psi_G((-\log
    |t_e(\alpha')|^{\alpha'})_{e})}+\alpha 'h(s)\right]\\
  &=\frac{\phi_G(\underline \p^G,\underline Y)}{\psi_G(\underline Y)},  
\end{align*}
where we have used that $\phi_G \slash \psi_G$ is homogeneous of degree one, as well as the boundedness of $h$.
\smallskip

\begin{remark} If one wants to compute the quantum field theory amplitude \eqref{feynman-int} for ``off shell'' momenta as a limit of heights in the spirit of this paper,  the surprising ``on shell'' condition in Theorem \ref{thm:mainintro} can be avoided by simply taking momenta $\underline \p^G=\underline \p_1^G=\underline \p_2^G$ and disjoint multisections $\sigma_1, \sigma_2$ which have the same intersection data with components of the curve at infinity. One can combine equation \eqref{eq:22} and the above limit calculation, noting that $\phi_G(\underline \p^G,\underline Y) = \phi_G(\underline \p_1^G,\underline \p_2^G,\underline Y)$. 
\end{remark}

The proofs of Theorems \ref{thm:6} and \ref{thm:7} are based on the Hodge theoretic
interpretation of the archimedean height. Since both sides of the
equality \eqref{eq:22} are bilinear in the
momenta, we can reduce to the case of
integer-valued divisors. Then 
$\mathfrak{A}_s$ and $\mathfrak{B}_s$ define a \textit{biextension}
mixed Hodge structure $H_{\mathfrak{B_s}, \mathfrak{A_s}}$ with graded
pieces $\Z(1), H^1(\mathcal{C}_s, \Z(1)), \Z(0)$. The moduli space of
such biextensions is the $\mathbb{C}^\times$-bundle associated to the
Poincar\'e line bundle over
$J(\mathcal{C}_s) \times \widehat{J(\mathcal{C}_s)}$, and one recovers
the archimedean height by evaluating its canonical metric at
$H_{\mathfrak{B}_s, \mathfrak{A}_s}$. As $s$ varies, the biextensions
$H_{\mathfrak{B}_s, \mathfrak{A}_s}$ fit together into an admissible variation of
mixed Hodge structures over $U$. We shall write the
period map
$$
\widetilde{\Phi} \colon \widetilde{U} \longrightarrow \H_{g}\times
\text{Row}_{g}(\C)\times \text{Col}_{g}(\C)\times \C,
$$ 
where $\widetilde{U}$ is the universal cover of $U$ and
$\mathbb{H}_{g}$ the Siegel upper half-space. If $\mathcal{P}^\times$
denotes the Poincar\'e bundle over the universal family of abelian
varieties and their duals, $\widetilde{\Phi}$ descends to the map
$\Phi \colon U\to \mathcal{P}^\times$ which sends $s$ to
$H_{\mathfrak{B}_s, \mathfrak{A}_s}$. Then (a weak version of) the
nilpotent orbit theorem 
for variations of mixed Hodge structures allows us to describe the
asymptotic of the height pairing. 
\smallskip

In order to prove Theorem~\ref{thm:7}, we need to study the effect of
the regularization process. To this end, we consider an analytic
family of sections $\sigma_{j}^u$ 
parametrized by $u$ in a small disc such that $\sigma_{j}^0 =\sigma_{j}$
and, for each $u\not =0$, the sections
$\sigma_{i}^{u}$ and $\sigma_{j}$ are 
disjoint for all $i, j$. We also choose a suitable rationally equivalent divisor $\oA+\mathrm{div}(f)$. When the
metric extends continuously to $\sC$, the difference between the
regularizations obtained using the metric $\mu$ and changing the section through the function $f$ is bounded, thus proving
the result. On the other hand, if the external momenta satisfy the ``on shell'' condition, then all divergent
terms vanish, which makes the regularization process independent of the
metric. In this way we deduce the result in the ``on shell'' case for
non-continuous metrics from the result for continuous metrics.  

\smallskip

We would like to end this introduction by mentioning that the
asymptotic of the height pairing has been previously studied from a mathematical perspective 
in~\cite{Hain, HP, HJ, Lear, pearl:sl2}. In particular, an analogue of
theorems~\ref{thm:6} and \ref{thm:7} for families of curves
over a one-dimensional base was established by Holmes and de Jong
in~\cite{HJ}. Some Hodge theoretic aspects of stable curves appear
already in the work of Hoffman~\cite{Hoffman}.  We are less familiar with the physics literature, but many of the ideas in this paper are discussed from a physics viewpoint in \cite{Vanhove}.

\smallskip

The paper is organized as follows. In Section \ref{sec:sym}, we
discuss Symanzik polynomials in an abstract setting, convenient 
for the sequel. Section~\ref{riemannsurf} is devoted to the study of the local monodromy of the analytic versal deformation. In Section \ref{sec:arch-heights-poinc}, we recall
the definition of the archimedean height pairing, as well as its
interpretation in terms of biextensions and the Poincar\'e
bundle. Section \ref{sec:nilp} contains the proof of Theorem
\ref{thm:6}, for which we need to write the
period map and use part of the nilpotent orbit theorem. Finally, in Section
\ref{sec:conv-integr} we put everything together to get the convergence
of the integrands.  

\medskip

{\bf Acknowledgments.} This project grew out of discussions
started during the Clay Mathematics Institute Summer School
\emph{Periods and motives: Feynman amplitudes in the 21st century},
held at the ICMAT in July 2014, and followed by a visit to the \'{E}NS
in January 2015. We would like to thank these institutions for their
hospitality. We are very grateful to the anonymous referee for a long
list of comments which greatly improved the
paper. O. A. thanks Amir-Kian Kashani-Poor for helpful
discussions. S. B. would like to acknowledge helpful conversations
with Piotr Tourkine and Pierre Vanhove. Most particularly, he would
like to acknowledge correspondence and conversations going back ten
years or more with Prof. Kazuya Kato; this paper may be viewed as a
kind of first step to understand his work~\cite{KU}. J. I. B. would like to thank
David Holmes and Robin de
Jong for many helpful discussions. He was
partially supported by the MINECO research projects MTM2013-42135-P
and ICMAT Severo Ochoa project SEV-2015-0554, and 
the DFG project SFB 1085 ``Higher Invariants''. Finally, J. F. would
like to thank
Rahul Pandharipande for stimulating questions when a first version of
these results was presented at his \textit{Algebraic geometry and
  moduli} seminar. He also acknowledges support from the SNFS grants
200021-150099 and 200020-162928.

\tableofcontents

\section{Symanzik Polynomials}\label{sec:sym}

In this section, we define the first and the second Symanzik polynomials in an abstract setting. We then show how to recover the usual formulas for $\psi_G$ and $\phi_G$ in the case of graphs~\cite{ELOP}. 
Throughout, if $\ok$ is a field and $E$ a finite set, we write $\ok^E=\bigl\{\,\sum_{e\in E} \kappa_ee\ |\ \kappa_e \in \ok\,\bigr\}$. 
For each $e\in E$, we denote by $e^\vee: \ok^E \to \ok$ the functional which takes the $e$-th 
coordinate of a vector.

\subsection{Abstract setting} Let $H$ be a vector space of finite dimension $h$ over a field $\ok$, 
and suppose we are given a finite set $E$ of cardinality at least $h+1$, and an embedding $\iota: H \inj \ok^E$. Abusing notation, we write $e^\vee$ as well for the composition $H \inj \ok^E \to \ok$. 
The function which sends $x \in H$ to $e^\vee(x)^2$ defines a rank one quadratic form $e^{\vee,2}$ on $H$. When needed, we denote by 
$\langle\cdot\,,\cdot\rangle_e$ the corresponding bilinear form. 

\smallskip

If we fix a basis $\gamma_1, \dots, \gamma_h$ of $H$, we can identify
the quadratic form $e^{\vee,2}$ with an $h \times h$ symmetric matrix
$M_e$ of rank one  so that, thinking of elements of $H$ as column
vectors, 
we have
\begin{equation}\label{eq:1}
 e^{\vee,2}(x) = \prescript{\mathrm t}{}{x}M_e x.
\end{equation}

\smallskip

Let $\underline{Y}=\{\ovar_e\}_{e\in E}$ be a collection of variables
indexed by $E$, and consider the matrix $M=\sum_{e\in E}\ovar_eM_e$
(associated to the quadratic form $\sum_{e\in E} \ovar_e e^{\vee,2}$).
It is an $h\times h$ symmetric matrix whose entries are linear forms
in the $\ovar_e$. The linear map $M: H \to H^\vee$ is in fact
canonical and independent of the choice of a basis of $H$.
 
\begin{defn}\label{def:1st} The first Symanzik polynomial $\psi(H,\underline \ovar)$ associated to the configuration $H\hookrightarrow \ok^E$ 
is defined as
\begin{equation*}
\psi(H,\underline \ovar)=\det(M).
\end{equation*}
\end{defn}

\begin{remark}\label{rem:ind} Note that this definition depends on the
  choice of a basis of $H$. For a different basis, $M$ is replaced by
  $\prescript{\mathrm t}{}{P}M P$, where $P$ is the $h \times h$
  invertible matrix transforming one basis into the other, so the
  determinant gets multiplied by an element of $\ok^{\times, 2}$. The
  same argument shows that when $H=L \otimes_{\Z} \ok$ for a
  sublattice $L$ of $\Z^E$ and we restrict to bases coming from $L$,
  the first Symanzik polynomial is well-defined.  
\end{remark}

Let $\ws=\ok^E/H$. For any nonzero $w\in \ws$, we define
$H_w \subseteq \ok^E$ as the $(h+1)$-dimensional subspace of vectors
in $\ok^E$ whose images in $\ws$ lie in the line 
spanned by $w$.  Choosing a vector $\omega \in \ok^E$ in the preimage
of $w$, we can extend the basis $\{\gamma_1,\dots, \gamma_h\}$ of $H$
to a basis $\{\gamma_1, \dots, \gamma_h,\omega\}$ of $H_w$. The first
Symanzik polynomial $\psi(H_w,\underline \ovar)$ with respect to this
basis of $H_w$ yields the second Symanzik.
 
\begin{lem} $\psi(H_w,\underline \ovar)$ does not depend on the choice of $\omega$.
\end{lem}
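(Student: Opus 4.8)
The plan is to show that replacing one choice of preimage $\omega$ by another, $\omega' = \omega + v$ with $v \in H$, changes the basis $\{\gamma_1, \dots, \gamma_h, \omega\}$ of $H_w$ into $\{\gamma_1, \dots, \gamma_h, \omega'\}$ by a transition matrix $P$ that is \emph{unipotent and integral in a controlled way}, so that by the argument of Remark \ref{rem:ind} the determinant $\psi(H_w, \underline Y) = \det(M_{H_w})$ is multiplied by $(\det P)^2 = 1$. Concretely, write $v = \sum_{i=1}^h c_i \gamma_i$ with $c_i \in \ok$; then $\omega' = c_1 \gamma_1 + \cdots + c_h \gamma_h + \omega$, while each $\gamma_i$ maps to itself. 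Hence
\[
P = \begin{pmatrix} I_h & c \\ 0 & 1 \end{pmatrix}, \qquad c = \prescript{\mathrm t}{}{(c_1, \dots, c_h)},
\]
which has $\det P = 1$. By Remark \ref{rem:ind}, the $(h+1)\times(h+1)$ symmetric matrix $M_{H_w}$ computed with respect to the $\omega'$-basis equals $\prescript{\mathrm t}{}{P} M_{H_w} P$ computed with the $\omega$-basis, so the two determinants agree exactly (not just up to a square).

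First I would recall precisely how $M_{H_w}$ is built: for $H_w \hookrightarrow \ok^E$ one forms $M_{H_w} = \sum_{e \in E} Y_e M_{e, H_w}$, where $M_{e, H_w}$ is the rank-one matrix of the quadratic form $x \mapsto e^\vee(x)^2$ on $H_w$ in the chosen basis. The key point is that the functional $e^\vee \colon H_w \to \ok$ is intrinsic (it is the composition $H_w \hookrightarrow \ok^E \xrightarrow{e^\vee} \ok$), so the quadratic form $e^{\vee,2}$ on $H_w$ is independent of any basis choice; only its \emph{matrix} depends on the basis, and it transforms by $N \mapsto \prescript{\mathrm t}{}{P} N P$ under a change of basis with matrix $P$. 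Summing over $e$ with coefficients $Y_e$ preserves this, so $M_{H_w}^{(\omega')} = \prescript{\mathrm t}{}{P} M_{H_w}^{(\omega)} P$, and therefore
\[
\psi(H_w, \underline Y)^{(\omega')} = \det\!\bigl(\prescript{\mathrm t}{}{P} M_{H_w}^{(\omega)} P\bigr) = (\det P)^2 \det M_{H_w}^{(\omega)} = \psi(H_w, \underline Y)^{(\omega)}.
\]

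I would then note that the same reasoning shows a little more, which is worth recording for later use: the second Symanzik polynomial depends on $w \in \ws$ only through the \emph{line} it spans up to scaling of $w$ is needed — but in fact scaling $w$ by $\lambda \in \ok^\times$ rescales $\omega$ by $\lambda$, which is the transition matrix $\mathrm{diag}(1, \dots, 1, \lambda)$, multiplying the determinant by $\lambda^2$; so strictly $\psi(H_w, \underline Y)$ depends on $w$ up to $\ok^{\times, 2}$, consistent with the indeterminacy already present in $\psi(H, \underline Y)$ itself. For the statement as phrased, however, the only claim is independence of $\omega$ for a \emph{fixed} $w$, and that is exactly the unipotent case above with $\det P = 1$.

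There is no real obstacle here: the entire content is the observation that two lifts of $w$ differ by an element of $H$, whose effect on the basis of $H_w$ is a unipotent (hence determinant-one) change of coordinates, combined with the transformation rule for the matrix of a quadratic form already spelled out in Remark \ref{rem:ind}. The only point requiring a line of care is to make sure the quadratic forms $e^{\vee,2}$ on $H_w$ really are the intrinsic restrictions of the coordinate-squared forms on $\ok^E$, so that changing the basis of $H_w$ does not secretly change the forms themselves — but this is immediate from the definition of $e^\vee$ as a composition through the fixed embedding $H_w \hookrightarrow \ok^E$.
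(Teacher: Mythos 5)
Your proof is correct and follows the same route as the paper: two lifts of $w$ differ by an element of $H$, so the change of basis of $H_w$ is unipotent with determinant one, and the Gram determinant of $\sum_e Y_e\langle\cdot,\cdot\rangle_e$ is therefore unchanged. The paper states this in one line; your version merely makes the transition matrix explicit.
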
 
 
\begin{proof}
Let $\langle\,.\,,.\, \rangle = \sum_{e\in E} \ovar_e\langle\,.\,,.\,\rangle_e$ be the bilinear form on $H_w$ associated to the quadratic form $\sum \ovar_e e^{\vee,2}$. The polynomial 
$\psi(H_w, \underline \ovar)$  is the determinant of $\langle\,.\,,.\, \rangle$ with respect to the basis $\{\gamma_1,\dots, \gamma_h,\omega\}$. 
Changing the basis of $H_w$ by adding to $\omega$ a linear combination of $\gamma_1,\dots, \gamma_h$ does not change the  determinant of 
$\langle\,.\,,.\, \rangle$, so $\psi(H_w, \underline \ovar)$ only depends on $w$ and the basis $\{\gamma_1, \ldots, \gamma_h\}$ of $H$. 
\end{proof} 
 
\begin{defn}\label{second_symanzik} The second Symanzik polynomial associated to $H$, $w\in \ws$, and the variables $\underline \ovar=\{\ovar_e\}_{e\in E}$, is the polynomial 
\begin{equation*}
\phi(H,w,\underline \ovar)=\psi(H_w, \underline \ovar).
\end{equation*}
\end{defn}

\begin{prop} The ratio $\phi(H,w, \underline \ovar) \slash \psi(H,
  \underline \ovar)$ between the first and the second Symanzik
  polynomials does not depend on the choice of a basis of $H$. 
\end{prop}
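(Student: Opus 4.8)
The plan is to track how both Symanzik polynomials transform under a change of basis of $H$ and observe that the scaling factors cancel in the ratio. Suppose we replace the basis $\{\gamma_1,\dots,\gamma_h\}$ of $H$ by another basis, and let $P\in \mathrm{GL}_h(\ok)$ be the invertible matrix relating the two. By Remark~\ref{rem:ind}, the matrix $M$ representing the quadratic form $\sum_e \ovar_e e^{\vee,2}$ on $H$ is replaced by $\prescript{\mathrm t}{}{P}MP$, so that $\psi(H,\underline\ovar)=\det(M)$ is multiplied by $\det(P)^2\in \ok^{\times,2}$.

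Next I would do the analogous computation for $\phi(H,w,\underline\ovar)=\psi(H_w,\underline\ovar)$. Having fixed a lift $\omega\in\ok^E$ of $w$, the basis $\{\gamma_1,\dots,\gamma_h\}$ of $H$ was extended to the basis $\{\gamma_1,\dots,\gamma_h,\omega\}$ of $H_w$; the Lemma just proved guarantees that the value of $\psi(H_w,\underline\ovar)$ is insensitive to the choice of $\omega$ (and in particular to modifying $\omega$ by elements of $H$). When we pass to the new basis of $H$, we may take the new basis of $H_w$ to be the old basis of $H$ transformed by $P$, together with the \emph{same} vector $\omega$; the corresponding change-of-basis matrix for $H_w$ is the block matrix $\tilde P=\begin{pmatrix}P & 0\\ 0 & 1\end{pmatrix}$, which has $\det(\tilde P)=\det(P)$. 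Hence the Gram matrix of $\sum_e\ovar_e\langle\,.\,,.\,\rangle_e$ on $H_w$ is multiplied by $\prescript{\mathrm t}{}{\tilde P}(\,\cdot\,)\tilde P$, and $\phi(H,w,\underline\ovar)$ gets multiplied by $\det(\tilde P)^2=\det(P)^2$, exactly the same factor as $\psi(H,\underline\ovar)$.

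Therefore the ratio $\phi(H,w,\underline\ovar)/\psi(H,\underline\ovar)$ is multiplied by $\det(P)^2/\det(P)^2=1$, i.e.\ it is unchanged, which is the assertion. The only mildly delicate point — and the one I would state carefully — is that the extension of the new basis of $H$ to a basis of $H_w$ can be chosen compatibly, so that the change-of-basis matrix on $H_w$ is genuinely block triangular with the identity in the $\omega$-slot; this is exactly what the preceding Lemma licenses, since any other admissible choice of lift differs by a vector of $H$ and does not affect $\psi(H_w,\underline\ovar)$. Aside from this bookkeeping, the proof is a one-line determinant computation, so I do not anticipate a real obstacle. (One could equivalently phrase the whole argument intrinsically: $M$ is the canonical map $H\to H^\vee$ and $\phi$ is the determinant of the canonical map $H_w\to H_w^\vee$ computed in a basis compatible with $0\to H\to H_w\to \ok w\to 0$, and a change of basis of $H$ acts on both determinants through $\det$ of the induced automorphism, squared.)
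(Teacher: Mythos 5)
Your proposal is correct and follows essentially the same route as the paper: the paper's proof likewise observes that a change of basis of $H$, extended to $H_w$ by keeping the lift $\omega$ fixed, multiplies both $\psi(H,\underline \ovar)$ and $\phi(H,w,\underline \ovar)=\psi(H_w,\underline \ovar)$ by the same factor in $\ok^{\times,2}$, so the ratio is unchanged. Your write-up merely makes explicit that this common factor is $\det(P)^2$, via the block matrix $\bigl(\begin{smallmatrix}P&0\\0&1\end{smallmatrix}\bigr)$, and correctly invokes the preceding Lemma to justify keeping $\omega$ in the extended basis.
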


\begin{proof} Let $\langle\,.\,,.\, \rangle$ still denote the bilinear form on $H_w$ associated to the quadratic form $\sum \ovar_e e^{\vee,2}$. Then $\psi(H, \underline \ovar)$ is the determinant, in the given basis, of the restriction of $\langle\,.\,,.\, \rangle$ to $H$. 
Changing the basis multiplies both $\psi(H, \underline \ovar)$ and $\phi(H,w, \underline \ovar)=\psi(H_w, \underline \ovar)$ by the same factor in $\ok^{\times, 2}$, from which the claim follows. 
\end{proof}

To give another formula for the second Symanzik polinomial which will be used later we
introduce the following notation.

\begin{defn}\label{def:1} Let $w \in \ws \setminus \{0\}$ and
  choose $\omega \in \K^{E}$ in the preimage of $w$. We denote by
  $W_{e}(\omega )$ the column vector with components $\langle \gamma
  _{i},\omega \rangle_{e}$. If $w'$ and $\omega '$ is another choice of
  such vectors we write
  \begin{displaymath}
    Q_{e}(\omega ,\omega ')=\langle \omega ,\omega '\rangle_{e} \quad \text{and} \quad Q_{e}(\omega)=Q_{e}(\omega ,\omega).
  \end{displaymath}
\end{defn}

\begin{prop}\
  \begin{enumerate}
  \item The first Symanzik polynomial $\psi(H,\underline \ovar)$ is
    homogeneous of degree $h=\dim H$ in the variables $\ovar_e$. 
  \item The second Symanzik is given by
    \begin{equation}
      \label{eq:4}
      \phi(H,w,\underline \ovar)=\det\Big(\sum
      \ovar_e\begin{pmatrix}M_e & W_e(\omega ) \\ 
\prescript{\mathrm t}{}{W_e(\omega )} & Q_e(\omega )\end{pmatrix}\Big).
    \end{equation}
    Moreover, it is homogeneous of degree $h+1$ in the variables
    $\ovar_e$ and is quadratic in $w \in \ws \setminus \{0\}$.  
  \end{enumerate}
\end{prop}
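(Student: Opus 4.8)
The plan is to prove the three assertions in turn, deriving everything from the definitions of $\psi$ and $\phi$ as determinants of the symmetric matrices $M=\sum_e \ovar_e M_e$ and $M_w = \sum_e \ovar_e \begin{pmatrix}M_e & W_e(\omega) \\ \prescript{\mathrm t}{}{W_e(\omega)} & Q_e(\omega)\end{pmatrix}$.

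\textbf{Homogeneity of $\psi$.} First I would observe that $M=\sum_{e\in E}\ovar_e M_e$ is an $h\times h$ matrix whose entries are linear forms in the $\ovar_e$; scaling all variables by $\lambda$ scales $M$ by $\lambda$, hence $\det M$ by $\lambda^h$. This gives part (1) immediately. The same reasoning applied to the $(h+1)\times(h+1)$ matrix $M_w$ — whose entries are again linear in the $\ovar_e$, since the off-diagonal block $W_e(\omega)$ and the corner $Q_e(\omega)$ are both linear in $\ovar_e$ once we sum over $e$ — shows $\phi$ is homogeneous of degree $h+1$.

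\textbf{Formula \eqref{eq:4}.} By Definition \ref{second_symanzik} and the lemma preceding it, $\phi(H,w,\underline\ovar)=\psi(H_w,\underline\ovar)=\det(\sum_e \ovar_e M_e')$, where $M_e'$ is the Gram matrix of the bilinear form $\langle\cdot,\cdot\rangle_e$ in the basis $\{\gamma_1,\dots,\gamma_h,\omega\}$ of $H_w$. I would then simply read off the block structure of $M_e'$: the upper-left $h\times h$ block is $M_e$ (the Gram matrix of $\langle\cdot,\cdot\rangle_e$ on $H$ in the basis $\gamma_i$), the last column has entries $\langle\gamma_i,\omega\rangle_e$, which is by Definition \ref{def:1} exactly $W_e(\omega)$, the last row is its transpose by symmetry of the bilinear form, and the bottom-right corner is $\langle\omega,\omega\rangle_e = Q_e(\omega)$. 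Summing over $e$ with weights $\ovar_e$ yields precisely the matrix in \eqref{eq:4}.

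\textbf{Quadratic dependence on $w$.} This is the step requiring the most care, because it is a statement about a determinant, not about a Gram matrix in a fixed basis. The key is that $\phi(H,w,\underline\ovar)$ depends only on $w$ (by the lemma, not on the lift $\omega$), so I may compute it using \emph{any} lift. I would first fix a linear section $\ws \setminus\{0\} \ni w \mapsto \omega(w) \in \K^E$ of the quotient $\K^E \to \ws$, so that $W_e(\omega(w))$ is linear in $w$ and $Q_e(\omega(w),\omega(w'))$ is bilinear. Then $M_w$ is a matrix whose last row and column are linear in $w$ and whose corner is quadratic in $w$; expanding $\det M_w$ by cofactors along the last row, each term is (an entry linear or quadratic in $w$) times (a minor not involving the last row, hence the corner, times a factor at most linear in $w$ from the last column), giving total degree at most two in $w$ — and there is no constant term since setting $w=0$ makes the last row and column vanish. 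The mild subtlety is checking that the global section $w\mapsto\omega(w)$ can be chosen linearly; but $\K^E\to\ws$ is a surjection of finite-dimensional vector spaces, so it admits a linear splitting, and the well-definedness lemma guarantees the answer is independent of this choice. I expect the bookkeeping in the cofactor expansion — confirming that no cubic-or-higher terms in $w$ survive — to be the only real point needing attention, and it is routine.
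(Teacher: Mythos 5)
Your proposal is correct and follows essentially the same route as the paper, whose proof simply observes that part (1) is clear from $\psi(H,\underline \ovar)=\det(\sum_e \ovar_e M_e)$ and that \eqref{eq:4} is just a reformulation of the definition of the second Symanzik polynomial via the Gram matrix in the basis $\{\gamma_1,\dots,\gamma_h,\omega\}$, from which the homogeneity and quadraticity statements follow at once. You merely make explicit the details the paper leaves implicit (the cofactor expansion and the choice of a linear splitting of $\K^E\to\ws$ to see the quadratic dependence on $w$), and these details are correct.
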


\begin{proof} The first statement is clear from
  $\psi(H,\underline \ovar)=\det(\sum Y_{e}M_{e})$. Equation \eqref{eq:4} is
  just a reformulation of the definition of the second Symanzik
  polynomial, from which the last statement follows immediately. 
  \end{proof}

 One can slightly generalize the definition of the second Symanzik polynomial. Let $\vs$ be a vector space over $\ok$ equipped with a quadratic form $q$ associated to a symmetric bilinear form $\langle\cdot\,,\cdot\rangle_q$. 
 Using it, one can make sense of the determinant on the right hand side of \eqref{eq:4} and define $\phi(H,w,\underline \ovar)$ for any nonzero element $w\in \ws\otimes_\ok \vs$.  
 Typically, for physics applications, $\ok=\R$, $\vs=\R^D$ is space-time, and $q$ is the Minkowski metric. This works as follows. 
 
 \smallskip
 
 One naturally extends
 $\langle\,.\,,.\,\rangle_e$ to a 
 bilinear pairing 
 \begin{equation*}
 \langle\,.\,,.\,\rangle_e: H \times ( \ok^E \otimes_\ok \vs) \rightarrow \vs,
 \end{equation*} and $e^{\vee,2}$ to the quadratic form $Q_e$ on $\ok^E\otimes_{\ok} \vs$ given by 
 \begin{equation*}
 Q_e(\alpha \otimes \beta) = e^{\vee,2}(\alpha) q(\beta).
 \end{equation*}
 
 \smallskip
 
 Let $w\in \ws\otimes_\ok \vs$ and consider $\omega \in \ok^E\otimes \vs$ in the preimage of $w\in\ws\otimes_\ok \vs$. Applying the bilinear pairing to $(\gamma_i, \omega)$ leads to the column vector 
 $W_e=(v_{e,1},\dotsc,v_{e,h})$ with entries $v_{e,i}\in \vs$.
 
 \smallskip
 
 Using the above extension, one can now make sense of the determinant in \eqref{eq:4} and define $\phi(H,w,\underline \ovar)$ for $w\in \ws\otimes \vs$. To explain this, 
 suppose we have a symmetric $(h+1)\times (h+1)$ matrix of the form 
\begin{equation*}
T= \begin{pmatrix} M & W\\
\prescript{\mathrm t}{}{W} &  S
\end{pmatrix}
\end{equation*}
where $M$ is an invertible $h\times h$ matrix, $W$ is a (column) vector of dimension $h$, and $S$ is a scalar. The formula $(\det M)M^{-1} = \text{adj}(M)$, where $\text{adj}(M)$ is the matrix of minors, gives 
\begin{equation*}
\frac{\det T}{\det M}= -\prescript{\mathrm t}{}{W}M^{-1}W+S.
\end{equation*}

\smallskip

Taking $W = \sum_{e\in E} \ovar_e W_e$, where $W_e$ has now entries in $\vs$, the determinant in \eqref{eq:4} can be written as
\begin{equation}\label{eq:21}
\frac{\phi(H,w,\underline \ovar)}{\psi(H, \underline \ovar )} = -\prescript{\mathrm t}{}{W}M^{-1}W+Q(\omega),
\end{equation}
where $Q(\omega) = \sum_{e\in E}\ovar_e Q_e(\omega)$, and the product $\prescript{\mathrm t}{}{W}M^{-1}W$ is interpreted via the bilinear form 
(in the sense that, developing the product as the sum of the form $\sum v_{e,i} m_{i,j} v_{e,j}$, with $v_{e,i}, v_{e,j} \in \vs$, $m_{i,j}\in \ok[\underline\ovar]$, becomes $\sum m_{i,j} \langle v_{e,i},v_{e,j}\rangle_q$).

\smallskip

The expression \eqref{eq:21} will be
  later used to relate the second Symanzik to the archimedean height.

\subsection{Graphs}
In what follows, $G$ is a connected graph with edge set $E=E(G)$ and vertex set $V=V(G)$. We will fix an orientation on the edges so we have a boundary map $\partial: \Z^E \to \Z^V,\ e \mapsto \partial^+(e)-\partial^-(e)$, 
where $\partial^+$ and $\partial^-$ denote the head and the tail of
$e$, respectively. The homology of $G$ is defined via the exact
sequence
\begin{equation}\label{eq:6}
0 \rightarrow H_1(G,\Z) \rightarrow \Z^E \xrightarrow{\partial} \Z^V
\rightarrow \Z \rightarrow 0. 
\end{equation}
Homology with coefficients in any abelian group is defined similarly. 

\smallskip

In order to apply the constructions of the previous section, we write $H=H_{1}(G,\K)$. The exact sequence \eqref{eq:6} yields an isomorphism
\begin{equation}
  \label{eq:20}
\ws=\K^{E}/H \simeq \K^{V,0}, 
\end{equation}
where $\K^{V, 0}$ consists of those $x \in \K^V$ whose coordinate sum to zero. We will use \eqref{eq:20} to identify both spaces.

\begin{defn} The first Symanzik polynomial of a connected graph $G$ is the first Symanzkik polynomial, as in Definition \ref{def:1st}, associated to the configuration $H=H_1(G, \ok )\subset \ok^E$. We will denote it by 
$$
\psi_G(\underline \ovar)=\psi(H, \underline \ovar). 
$$
\end{defn}

Since $H=H_1(G, \Z) \otimes_{\Z} \ok$, Remark~\ref{rem:ind} guarantees that $\psi_G$ is independent of the choice of an integral basis. The link between the above definition and the expression for $\psi_G$ given in the introduction is the content of Kirchhoff's matrix-tree theorem~\cite{Kir}. Recall that a subgraph $T$ of $G$ is called a spanning tree if it is connected and simply connected, and satisfies $V(T) = V(G)$. 

\begin{prop} The first Symanzik polynomial $\psi_G$ is equal to
\begin{equation*}
\psi_G (\underline \ovar) = \sum_{T\subset G}\prod_{e\not\in T}\ovar_e,
\end{equation*}
 where $T$ runs through all spanning trees of $G$. 
\end{prop}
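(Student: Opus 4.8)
The plan is to compute $\det(M)$ directly, where $M=\sum_{e\in E}Y_e M_e$ is the matrix of the quadratic form $\sum_e Y_e\, e^{\vee,2}$ on $H=H_1(G,\K)$ with respect to an integral basis $\gamma_1,\dots,\gamma_h$ coming from $H_1(G,\Z)\subset\Z^E$. By Remark~\ref{rem:ind} the polynomial is independent of this choice. First I would make the matrices explicit: if we write $\gamma_i=\sum_{e}\gamma_{i,e}\,e$, then $e^\vee(\gamma_i)=\gamma_{i,e}$, so $M_e$ is the rank-one matrix $\gamma_{\cdot,e}\,{}^{\mathrm t}\gamma_{\cdot,e}$ and the $(i,j)$-entry of $M$ is $\sum_{e\in E}Y_e\,\gamma_{i,e}\gamma_{j,e}$. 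In matrix form, letting $N$ be the $h\times|E|$ matrix with rows the $\gamma_i$ (the inclusion $H_1\hookrightarrow\Z^E$) and $\mathrm{diag}(\underline Y)$ the diagonal matrix of the variables, we have $M=N\,\mathrm{diag}(\underline Y)\,{}^{\mathrm t}N$.

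Next I would apply the Cauchy--Binet formula to this product. This gives
\[
\det(M)=\sum_{\substack{S\subseteq E\\ |S|=h}}\det(N_S)^2\prod_{e\in S}Y_e,
\]
where $N_S$ is the $h\times h$ submatrix of $N$ with columns indexed by $S$. The combinatorial heart of the argument is then the classical fact that $\det(N_S)^2\in\{0,1\}$: it equals $1$ precisely when the complement $T=E\setminus S$ is the edge set of a spanning tree of $G$, and $0$ otherwise. To see this, note that a subset $S$ of $h=|E|-|V|+1$ edges has complement of size $|V|-1$, and $\det(N_S)\ne0$ exactly when the columns of $S$ span a complement of $H_1(G,\K)$ inside $\K^E$, equivalently when the edges of $T=E\setminus S$ map to a basis of $\K^{V,0}$ under $\partial$, equivalently when $T$ is a spanning tree; moreover in that case $N_S$ is a matrix that can be shown to be totally unimodular (it is a basis-change matrix between two integral unimodular bases of $H_1$), so its determinant is $\pm1$ and its square is $1$. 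Reindexing the sum over $S$ by $T=E\setminus S$ then yields $\psi_G(\underline Y)=\sum_{T\subset G}\prod_{e\notin T}Y_e$, which is exactly the asserted formula; this is Kirchhoff's matrix-tree theorem as quoted after the definition.

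The main obstacle is the unimodularity claim $\det(N_S)=\pm1$ when $T=E\setminus S$ is a spanning tree — i.e.\ that we recover the \emph{weighted count} of spanning trees with all coefficients equal to $1$ rather than some squared integer weights. The cleanest route I would take is to observe that an integral basis $\gamma_1,\dots,\gamma_h$ of $H_1(G,\Z)$ together with the $h$ ``fundamental cycles'' associated to a fixed spanning tree $T_0$ differ by an element of $GL_h(\Z)$, and the fundamental-cycle matrix has the appropriate square submatrices equal to $\pm1$ by a direct triangular argument (ordering the non-tree edges against their fundamental cycles). Alternatively, one can invoke that $H_1(G,\Z)$ is a direct summand of $\Z^E$ (the sequence \eqref{eq:6} splits over $\Z$ since $\Z$ is free), so $N$ extends to an element of $GL_{|E|}(\Z)$ and every maximal minor lies in $\{0,\pm1\}$; then a minor is nonzero iff the corresponding complementary columns also form part of a $\Z$-basis, which happens iff those complementary edges form a spanning tree. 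Everything else — Cauchy--Binet, the dictionary between ``columns span a complement of $H_1$'' and ``complementary edges form a spanning tree'', reindexing — is routine.
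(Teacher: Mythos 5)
The paper gives no proof of this proposition: it records that the identification of $\det(M)$ with the spanning-tree sum is Kirchhoff's matrix-tree theorem and refers to the citation. So you are supplying the argument the paper delegates, and your main line — writing $M=N\,\mathrm{diag}(\underline Y)\,{}^{\mathrm t}N$ with $N$ the $h\times|E|$ matrix of an integral basis of $H_1(G,\Z)$, expanding by Cauchy--Binet, and identifying the nonvanishing minors with complements of spanning trees — is the standard and correct route, and Remark~\ref{rem:ind} indeed lets you work with an integral basis. One small slip: $\det(N_S)\neq 0$ is equivalent to the \emph{complementary} coordinate subspace $\K^{T}$, $T=E\setminus S$, being a complement of $H_1(G,\K)$ in $\K^E$ (equivalently $H_1(G,\K)\cap\K^T=0$), not to the columns indexed by $S$ spanning a complement; your next ``equivalently'' clause (the edges of $T$ map to a basis of $\K^{V,0}$ under $\partial$) is the correct statement and is the one you actually use.

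The genuine problem is your second, ``alternative'' unimodularity argument: it is false that a matrix whose rows form a basis of a direct summand of $\Z^E$ (equivalently, which extends to an element of $GL_{|E|}(\Z)$) has all maximal minors in $\{0,\pm1\}$. For instance the row $(2,3)$ spans a direct summand of $\Z^2$ and extends to $\left(\begin{smallmatrix}2&3\\1&2\end{smallmatrix}\right)\in GL_2(\Z)$, yet its $1\times1$ minors are $2$ and $3$. The unimodularity here is special to the cycle lattice and should be proved tree by tree, which is essentially your first suggestion with the quantifier placed correctly: for \emph{each} spanning tree $T$, take the fundamental-cycle basis of $H_1(G,\Z)$ attached to $T$ itself (for $e\notin T$, the unique cycle in $T\cup\{e\}$, oriented so that $e$ appears with coefficient $+1$); its matrix restricted to the columns $S=E\setminus T$ is the identity, and any other integral basis differs from it by an element of $GL_h(\Z)$, so $\det(N_S)=\pm1$. (Fixing a single $T_0$ and trying to triangularize the $T_0$-fundamental matrix against an arbitrary other tree $T$ is not a one-line triangular argument.) Alternatively, note that $\partial$ restricted to $\Z^T$ is an isomorphism onto the image of $\partial$ in \eqref{eq:6}, so $\Z^E=H_1(G,\Z)\oplus\Z^T$ integrally; hence the projection $H_1(G,\Z)\to\Z^S$ is an isomorphism of lattices and its determinant in integral bases is $\pm1$. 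With either repair, Cauchy--Binet gives exactly $\sum_{T}\prod_{e\notin T}Y_e$, and your proof is complete.
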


The second Symanzik polynomial can be described explicitly via the \emph{external momenta}, as we explain now. 
Note that, in the situations coming from physics, it also depends on the masses associated to the edges. However, in this paper we only consider the massless case. 

\smallskip

Let $\vs$ be a vector space over a field $\ok$ with a symmetric bilinear form $\langle\cdot\,,\cdot\rangle_q$, and consider $\omega \in \ok^E \otimes \vs$ 
which reduces to the vector $w \in \ws \otimes \vs$, where $H = H_1(G, \ok)$ and $\ws = \ok^E /H$. Using the isomorphism \eqref{eq:20}, we have $\ws\otimes \vs  \simeq
\ok^{V,0}\otimes \vs \simeq \vs^{V,0}$. In other words, the choice of
$w \in \ws \otimes \vs$ is equivalent to the choice of  external
momenta $\ps_v\in \vs$ satisfying
the conservation law $\sum_{v \in V} \p_v=0.$

\begin{defn} Let $G$ be a connected graph and $\underline \p = \{\ps_v\} \in \vs^{V,0}$ be external momenta. The second Symanzik polynomial of $(G, \underline \p)$ is 
\begin{equation*}
\phi_G(\underline \p, \underline \ovar)=\phi(H, \omega, \underline \ovar)
\end{equation*} for the element $\omega \in \ok^E \otimes \vs$ with $\p=\partial(\omega)$.
\end{defn}

To give an explicit description of the polynomial $\phi_G(\underline \p, \underline \ovar)$, we need to introduce some extra notation. Let $G$ be a connected graph. 
A spanning $2$-forest $F\subset G$ is a subgraph of $G$, with two connected components $F_1$ and $F_2$, satisfying $V(F) = V(G)$ 
and $H_1(F,\Z)=0$ (so each $F_i$ is a subtree of $G$). Given a collection of external momenta $\underline \p = (\ps_v)\in \vs^{V,0}$ and a spanning 2-forest, we define $\ps(F_i)= \sum_{v\in V(F_i)} \ps_v$, the total momentum entering $F_i$, and $q(F)=- \langle \ps(F_1), \ps(F_2)\rangle_q = q(\ps(F_1))$, where the last equality follows from the conservation law. Then we have the following proposition, for which we refer the reader e.g. to~\cite{Chaiken}:

\begin{prop} \label{prop:second}
If $G$ is a connected graph, then 
\[\phi_G(\underline \p, \underline \ovar) = \sum_{F \subset G} q(F)\prod_{e\notin E(F)} \ovar_{e}\,,\]
where the sum runs over all spanning $2$-forests $F$ of $G$. 
\end{prop}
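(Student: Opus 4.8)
The plan is to recognize the assertion as a form of the all-minors matrix--tree theorem (cf.~\cite{Chaiken}) and to prove it via the Cauchy--Binet formula. A preliminary step is a reduction to the scalar case $\vs=\ok$, $q(x)=x^{2}$. By the Lemma preceding Definition~\ref{second_symanzik}, the determinant in~\eqref{eq:4} is independent of the chosen lift $\omega$, so both sides of the claimed identity are quadratic forms in $\underline\p\in\vs^{V,0}\cong\ws\otimes\vs$. Writing a lift $\omega\in\ok^{E}\otimes\vs$ of $w$ as a sum of pure tensors $\sum_{k}\rho_{k}\otimes p_{k}$, with $\rho_{k}\in\ok^{E}$ and $p_{k}\in\vs$, we have $\underline\p=\sum_{k}(\partial\rho_{k})\,p_{k}$, so that $q(F)=\langle\ps(F_{1}),\ps(F_{1})\rangle_{q}$ is assembled from the pairings $\langle p_{k},p_{\rl}\rangle_{q}$ and the scalars $(\partial\rho_{k})(F_{1})$; and~\eqref{eq:21} exhibits $\phi(H,w,\underline\ovar)$ as $B(\omega,\omega)$ for a $\langle\cdot\,,\cdot\rangle_{q}$-bilinear form $B$ with $B(\rho_{k}\otimes p_{k},\rho_{\rl}\otimes p_{\rl})=\langle p_{k},p_{\rl}\rangle_{q}\cdot B^{\mathrm{sc}}(\rho_{k},\rho_{\rl})$, where $B^{\mathrm{sc}}$ denotes the polar form of the scalar second Symanzik. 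Hence, once the identity is proved for $\vs=\ok$, the general case follows by bilinearity.

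Assume then $\vs=\ok$. Fix an integral basis $\gamma_{1},\dots,\gamma_{h}$ of $H_{1}(G,\Z)\subset\Z^{E}$ and a lift $\omega\in\ok^{E}$ of $w$, so $\partial\omega=\underline\p$, and let $B$ be the $|E|\times(h+1)$ matrix whose columns are $\gamma_{1},\dots,\gamma_{h},\omega$. Unwinding~\eqref{eq:1} and~\eqref{eq:4}, we get $\phi_{G}(\underline\p,\underline\ovar)=\det\bigl(\prescript{\mathrm t}{}{B}\operatorname{diag}(\ovar_{e})_{e\in E}\,B\bigr)$, and the Cauchy--Binet formula rewrites this as
\[
\phi_{G}(\underline\p,\underline\ovar)=\sum_{S}(\det B_{S})^{2}\prod_{e\in S}\ovar_{e},
\]
the sum running over $(h+1)$-element subsets $S\subseteq E$, with $B_{S}$ the corresponding square submatrix of $B$. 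Comparing this with the target formula (and reindexing by $F=E\setminus S$), it suffices to prove that $\det B_{S}$ vanishes unless $F:=E\setminus S$ is a spanning $2$-forest, and that in that case $(\det B_{S})^{2}=\ps(F_{1})^{2}=q(F)$.

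For the latter I would Laplace-expand $\det B_{S}$ along the column $\omega$, which brings in the maximal minors of the cycle matrix $(\gamma_{1}\,|\,\cdots\,|\,\gamma_{h})$ on the $h$-element edge sets $S\setminus\{e\}$. The classical input from the matrix--tree theorem is that such a minor is $0$ unless the complementary $|V|-1$ edges form a spanning tree, in which case it is $\pm1$, with mutually coherent signs. It follows that $\det B_{S}$ is a signed sum of the scalars $\omega_{e}$ over those edges $e$ for which $F\cup\{e\}$ is a spanning tree --- which forces $F$ to be a spanning $2$-forest and $e$ to lie in the cut between its two components $F_{1},F_{2}$ --- and the discrete divergence theorem identifies this signed sum with $\pm\sum_{v\in V(F_{1})}(\partial\omega)_{v}=\pm\,\ps(F_{1})$. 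The step that I expect to require real care is exactly this sign bookkeeping: checking that the signs coming from the Laplace expansion together with the edge orientations combine to give precisely $\pm\,\ps(F_{1})$, and not some other linear combination of the $\omega_{e}$ on the cut. This is the heart of the all-minors matrix--tree theorem; one may either invoke~\cite{Chaiken} directly or carry out the orientation analysis by hand. The remaining ingredients --- the Cauchy--Binet step and the reduction to $\vs=\ok$ --- are formal.
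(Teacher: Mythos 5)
Your proposal is correct and is in substance the same route as the paper's: the paper gives no proof of Proposition \ref{prop:second}, referring instead to Chaiken's all-minors matrix--tree theorem \cite{Chaiken}, and your argument (reduction to the scalar case by bilinearity, rewriting \eqref{eq:4} as $\det(\prescript{\mathrm t}{}{B}\operatorname{diag}(\ovar_e)B)$, Cauchy--Binet, and the identification of the nonzero maximal minors with signed cut sums giving $(\det B_S)^2=q(F)$) is exactly the standard proof of that cited result. The sign-coherence step you flag is indeed the only delicate point, and handling it by invoking \cite{Chaiken}, as you allow, is precisely what the authors do.
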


\section{Degeneration of curves}\label{riemannsurf}

The aim of this section is to interpret the rank one symmetric matrices $M_e$ introduced in \eqref{eq:1} in terms of the monodromy of a degenerating family of curves~\cite{CCK, C}. For this, we fix a complex stable curve $C_0$ of arithmetic genus $g$ and dual graph $G=(V, E)$. Throughout $h$ denotes the first Betti number of $G$.  

\smallskip

Concretely, $C_0$ is a projective connected nodal curve with smooth irreducible components $X_v$ indexed by the vertices of $G$. It is obtained as a quotient of $\coprod_{v \in V} X_v$ by identifying a chosen point of $X_v$ with a chosen point of $X_w$ whenever there exists an edge connecting $v$ and $w$. Stability means that the automorphism group of $C_0$ is finite: letting $g(X_v)$ denote the geometric genus of $X_v$ and $\mathrm{val}(v)$ the valency of a vertex, this is equivalent to $2g(X_v)-2+\mathrm{val}(v)>0$ for every $v \in V$. 

\begin{prop}\label{genus} The identification map $p: \coprod_{v \in V} X_v \to C_0$ induces a canonical isomorphism
$$
H^1(G,\C) \simeq \ker\Bigl(\, H^1(C_0, \mathcal O_{C_0}) \stackrel{p^\ast}{\rightarrow} \bigoplus_{v\in V} H^1(X_v, \mathcal O_{X_v})\,\Bigr),
$$
and the arithmetic genus of $C_0$ is equal to $h+\sum_{v\in V}g(X_v)$, where $h$ is the first Betti number of $G$. Moreover, 
$$
H^1(G,\Z)\simeq \ker\Bigl(H^1(C_0,\Z) \stackrel{p^\ast}{\rightarrow} \bigoplus_{v \in V} H^1(X_v, \Z)\Bigr).$$ 
\end{prop}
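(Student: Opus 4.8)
The plan is to compute the cohomology of $C_0$ via the normalization exact sequence, which relates the sheaves on $C_0$ and on its normalization $\coprod_v X_v$. Let $\nu = p \colon \widetilde{C_0} = \coprod_{v \in V} X_v \to C_0$ be the normalization. The key exact sequence of sheaves on $C_0$ is
\begin{equation*}
0 \to \sO_{C_0} \to p_\ast \sO_{\widetilde{C_0}} \to \bigoplus_{\text{nodes}} \C \to 0,
\end{equation*}
where for each node (equivalently, each edge $e \in E$ joining $v$ and $w$) the quotient skyscraper sheaf records the difference of the two branch values. Taking the long exact sequence in cohomology and using that $p$ is finite (so $R^ip_\ast = 0$ for $i>0$ and $H^i(C_0, p_\ast \sF) = H^i(\widetilde{C_0}, \sF)$), one gets
\begin{equation*}
0 \to H^0(C_0,\sO_{C_0}) \to \bigoplus_v H^0(X_v,\sO_{X_v}) \to \C^E \to H^1(C_0,\sO_{C_0}) \to \bigoplus_v H^1(X_v,\sO_{X_v}) \to 0.
\end{equation*}
Since $C_0$ is connected and each $X_v$ is connected, the first map $\C \to \C^V$ is the diagonal, so the image of $\C^V \to \C^E$ is exactly the space of coboundaries, and the cokernel of $\bigoplus_v H^0(X_v,\sO_{X_v}) \to \C^E$ is $\C^E / \partial^{\!*}(\C^V)$, which by the defining exact sequence \eqref{eq:6} (dualized, or rather the cohomological version) is canonically $H^1(G,\C)$. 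This identifies $\ker\bigl(H^1(C_0,\sO_{C_0}) \to \bigoplus_v H^1(X_v,\sO_{X_v})\bigr)$ with $H^1(G,\C)$, giving the first isomorphism, and the genus formula follows by counting dimensions: $h^1(C_0,\sO_{C_0}) = h + \sum_v g(X_v)$.

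For the integral statement, I would run the same argument with the topological normalization sequence of sheaves on the underlying space: $0 \to \Z_{C_0} \to p_\ast \Z_{\widetilde{C_0}} \to \bigoplus_{\text{nodes}} \Z \to 0$, where I use that each node is a single point whose preimage under $p$ consists of two points (the two branches). The associated long exact sequence reads
\begin{equation*}
0 \to H^0(C_0,\Z) \to \bigoplus_v H^0(X_v,\Z) \xrightarrow{\ \delta\ } \Z^E \to H^1(C_0,\Z) \xrightarrow{p^\ast} \bigoplus_v H^1(X_v,\Z) \to \cdots,
\end{equation*}
and again $\delta$ is, up to the chosen orientation of edges, the dual boundary map $\partial^{\!*}\colon \Z^V \to \Z^E$, so $\ker(p^\ast) = \operatorname{coker}(\delta) = \Z^E/\partial^{\!*}(\Z^V) \simeq H^1(G,\Z)$, the last isomorphism again coming from \eqref{eq:6}. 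The only mild subtlety is checking that this integral identification is compatible with the $\C$-coefficient one — that is, that the $H^1(G,\Z)$ appearing here is genuinely a lattice inside the $H^1(G,\C)$ of the first part — which is immediate since both are produced by the same functorial normalization sequence, compatible under $\Z \hookrightarrow \sO_{C_0}$.

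The only real point requiring care — and the step I expect to be the main (minor) obstacle — is the canonicity and the sign/orientation bookkeeping in identifying the connecting map $\bigoplus_v H^0(X_v) \to \C^E$ (resp. $\Z^E$) with the dual boundary map of $G$: one must fix an orientation of each edge $e$, declare which branch of the node is the ``head'' $\partial^+(e)$ and which the ``tail'' $\partial^-(e)$, and verify the difference-of-branch-values map is $\partial^{\!*}$ for that convention, so that the resulting isomorphism with $H^1(G,-)$ defined via \eqref{eq:6} is the claimed canonical one (independent of orientation up to the canonical identifications). Everything else is a diagram chase and a dimension count. I would present the $\C$-coefficient case in full and then remark that the $\Z$-coefficient case is identical with $\sO$ replaced by the constant sheaf $\Z$.
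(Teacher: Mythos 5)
Your proposal is correct and follows essentially the same route as the paper: the normalization sequence $0 \to \sO_{C_0} \to p_\ast\sO_{\coprod X_v} \to \sS \to 0$, the fact that $p$ is finite so cohomology commutes with pushforward, identification of the resulting map $\C^V \to \C^E$ with the dual $\delta$ of the boundary map of \eqref{eq:6} (after fixing an orientation), hence $\ker(p^\ast) \simeq \operatorname{coker}(\delta) \simeq H^1(G,\C)$ and the genus count, and then the same argument with the constructible sequence $0 \to \Z \to p_\ast\Z \to \bigoplus_{\mathrm{nodes}}\Z \to 0$ for the integral statement. No gaps to report.
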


\begin{proof} Let us choose an orientation of the edges of $G$. Then we have an exact sequence of sheaves
\begin{equation}\label{eq:34}
0 \to \sO_{C_0} \to p_\ast\sO_{\coprod X_v} \stackrel{\varphi}{\longrightarrow} \sS \to 0,
\end{equation}
where $\mathcal S$ is a skyscraper sheaf with stalk $\C$ over each singular point of $C_0$ and the map $\varphi=(\varphi_e)_{e \in E}$ is defined as follows: if $f$ is a local section of $p_\ast\sO_{\coprod X_v}$ near the singular point corresponding to $e$, then $\varphi_e(f)=f(P_v)-f(P_w)$ where $v$ and $w$ denote the head and the tail of $e$ respectively, and $P_v \in X_v$ and $P_w \in X_w$ are the points identified to get $C_0$. Observe that, since $p$ is finite, taking cohomology commutes with $p_\ast$, so we get the exact sequence
\begin{equation*}
0 \rightarrow \C \rightarrow \C^{V} \xrightarrow{\delta} \C^{E} \rightarrow H^1(C_0, \sO_{C_0}) \stackrel{p^\ast}{\longrightarrow} \bigoplus_{v \in V} H^1(X_v, \sO_{X_v})\to 0, 
\end{equation*}
where $\delta$ is dual to the boundary map (with respect to the same orientation of the edges) in the definition \eqref{eq:6} of the graph homology. Thus, $H^1(G, \C) \simeq \mathrm{coker}(\delta)$ and the first isomorphism, as well as the expression for the arithmetic genus of $C_0$, follows.

\smallskip

The proof of the second assertion goes in the same way, up to replacing the exact sequence \eqref{eq:34} by the analogous sequence of constructible sheaves calculating Betti cohomology. 
\end{proof}

\subsection{Deformations}\label{sec-def}

We recall some basic facts about deformation theory of stable curves, for which we refer the reader to \cite{DM69, HM, Hart, Sch}. 

\smallskip

Let $C_0$ be, as before, a complex stable curve of arithmetic genus $g$. Standard results in deformation theory provide a smooth formal scheme $\widehat{S} = \text{Spf }\C[[t_1,\dotsc,t_N]]$ and a versal formal family of curves $\pi \colon \widehat{\sC} \to \widehat{S}$ with a fibre $\sC_0$ over $0\in \widehat{S}$ isomorphic to $C_0$. 
In particular, the total space $\widehat{\sC}$ is formally smooth over $\C$, and we get an identification of the tangent space $T$ to $\widehat{S}$ at $0$ with the Ext group $\Ext^1(\Omega^1_{C_0}, \sO_{C_0})$.  
Locally (for the \'etale topology) at the singular points, we have
\begin{equation}\label{loc-pre}
C_0 \simeq \Spec R, \quad R=\C[x,y]/(xy),
\end{equation}
so $\Omega^1_{C_0} \simeq Rdx\oplus Rdy/(xdy+ydx)$. In particular, since $xdy \in \Omega^1_{C_0}$ is killed by both $x$ and $y$, it follows that $\Omega^1$ has a non-trivial torsion subsheaf supported at the singular points. 
By the vanishing of the higher degree terms~\cite{DM69}, we get the following short exact sequence from 
the five term exact sequence of low degree terms in the local to global Ext-spectral sequence 
\begin{equation}\label{eq:5term}
0 \to H^1(C_0, \usHom(\Omega^1_{C_0},\sO_{C_0})) \to T \to \Gamma(C_0, \usExt^1(\Omega^1_{C_0},\sO_{C_0})) \to 0.
\end{equation}

The local $\sExt$ sheaf on the right can be easily calculated using
the local presentation \eqref{loc-pre} at the singular points: 
\begin{align*}
0 \to R &\longrightarrow Rdx\oplus Rdy \to \Omega^1_R \to 0 \\
1 &\longmapsto xdy+ydx 
\end{align*} One identifies in this way $\usExt^1(\Omega^1_{C_0},\sO_{C_0})$ with
the skyscraper sheaf consisting of one copy of $\C$ supported at each
singular point, hence
\begin{equation}\label{eq:sky}
\Gamma\bigl(C_0, \usExt^1(\Omega^1_{C_0},\sO_{C_0})\bigr) \simeq \C^E.
\end{equation}

\smallskip

In addition, $\Gamma\bigl(C_0,
\usExt^1(\Omega^1_{C_0},\sO_{C_0})\bigr)$ corresponds to smoothings of
the double points~\cite{DM69}, so the   
subspace $H^1(C_0,  \usHom(\Omega^1_{C_0},\sO_{C_0}))\subset T$
corresponds to deformations such that
only $X_v$ with the points corresponding to incident edges to
$v$ in $G$ move.  
Since $r$ points on $X_v$ of genus $g(X_v)$ have $3g(X_v)-3 + r$
moduli, we get the dimensions 
\begin{equation*}
\dim H^1(C_0,  \usHom(\Omega^1_{C_0},\sO_{C_0})) = \sum_{v\in
  V} (3g(X_v) -3 + \mathrm{val}(v)), 
\end{equation*}
\begin{align*}
\dim T &= \sum_{v\in V} (3(g(X_v) + \mathrm{val}(v)-3) +|E| \\
&=-3|V| +3|E|+ \sum_{v \in V} 3 g(X_v) \\
&= 3g-3, 
\end{align*} where the last equality follows from Proposition \ref{genus}. Note that the arithmetic genus of $C_0$ coincides with the usual genus of a smooth deformation of $C_0$, and that $3g-3$ is also the dimension of the moduli space of curves of genus $g$. 

\smallskip

For each edge $e\in E$, let  $p_e \in C_0$ be the corresponding singular point of $C_0$. Those deformations of $C_0$ which preserve the singularity at $p_e$ are given by a divisor $\widehat{D}_e \subset \widehat{S}$. Suppose that $f_e \in \sO_{\widehat{S}}$ defines $\widehat{D}_e$. Then the functional $T \to \C^E \xrightarrow{pr_e} \C$ is defined by $df_e$. Taking into account the identification \eqref{eq:sky}, this yields the surjective map $T \to \C^E$ in \eqref{eq:5term}. In the geometric picture, we have a collection of principal divisors $\widehat{D}_e\subset \widehat{S}$ indexed by the edges of $G$ which meet transversally. The subvariety cut out by these divisors is precisely the locus of equisingular deformations of $C_0$ 
which are given by moving the singular points. 

\smallskip

Similarly, if $(C_0, q_1, \ldots, q_n)$ is a complex stable curve of arithmetic genus $g$ with $n$ marked points, there exists a formal disc $\widehat{S}'$ of dimension $3g-3+n$ and a versal formal deformation $\pi \colon \widehat{\mathcal{C}}' \to \widehat{S}'$ such that the tangent space to $0 \in \widehat{S}'$ is identified with $\Ext^1(\Omega^1_{C_0}, \sO_{C_0}(-\sum_{i=1}^n q_i))$. The fibre at $0$ is isomorphic to $C_0$, and the family comes together with sections $\sigma_i \colon \widetilde{S}' \to \widetilde{\mathcal{C}}'$ such that $\sigma_i(0)=q_i$.

\subsection{Monodromy} The formal schemes given by the deformation theory can be spread out to yield an analytic deformation $\sC \to S$, where $S$ is a polydisc of dimension $3g-3$. 
In this way, the divisors lift to analytic divisors $D_e \subset S$ which are defined by the equation $\{f_e=0\}$. 

\smallskip

We fix a basepoint $s_0 \in S \setminus \bigcup_{e \in E} D_e$. 
The goal is to study the monodromy action on $H_1(C_{s_0},\Z)$. For this, we choose, for each $e\in E$, a simple loop $\ell_e\subset S \setminus \bigcup_{e \in E} D_e$ based at $s_0$ which loops around the divisor $D_e$. 
We assume that $\ell_e$ is contractible in the space $S \setminus \bigcup_{\ve\neq e}D_\ve$.  

\smallskip

The monodromy for the action of $\ell_e$ on $H_1(C_{s_0},\Z)$ is given by the {\it Picard-Lefschetz formula}: 
\begin{equation}\label{eq:42}
\beta \mapsto \beta-\langle \beta,a_e\rangle a_e,
\end{equation}
where $a_e\in H_1(C_{s_0},\Z)$ denotes the vanishing cycle associated to the double point which remains singular as one deforms along $D_e$. 
\smallskip

By a basic result in differential topology, after possibly shrinking the polydisc $S$, the inclusion $C_0 \hookrightarrow \sC$ admits a retraction $\sC \rightarrow C_0$ 
in such a way that the composition $\sC \to C_0 \to \sC$ becomes homotopic to the identity. The inclusion $C_0 \inj \sC$ is thus a homotopy equivalence, and from this, one gets the specialization map
\begin{equation*}
\mathrm{sp} \colon H_1(C_{s_0},\Z) \rightarrow H_1(\sC,\Z) \simeq H_1(C_0,\Z).
\end{equation*}

\begin{lem}The specialization map $\mathrm{sp}$ above is surjective. 
\end{lem}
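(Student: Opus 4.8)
The plan is to exhibit an explicit splitting of the specialization map on the level of cycles, using the geometry of the degenerating family. Recall that $\mathrm{sp}$ is induced by the homotopy equivalence $C_0 \hookrightarrow \sC$ together with the continuous map $C_{s_0}\hookrightarrow \sC$; concretely, a $1$-cycle on the nearby smooth fibre is pushed into the total space and then deformation-retracted onto $C_0$. So it suffices to produce, for a generating set of $H_1(C_0,\Z)$, loops on $C_{s_0}$ that retract onto them. By Proposition \ref{genus}, the homology of $C_0$ is built from two kinds of classes: those coming from the individual components $X_v$ (via $\bigoplus_v H_1(X_v,\Z)$), and the ``graph classes'' coming from $H_1(G,\Z)$, which are loops in $C_0$ that pass through several nodes.

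First I would treat the component classes. After shrinking $S$, each component $X_v$ of $C_0$ deforms together with its marked (node) points; more precisely, the retraction $\sC\to C_0$ restricts over a neighborhood of the smooth locus of $X_v$ to a family of curves specializing to $X_v$, and a symplectic basis of $H_1(X_v,\Z)$ lifts to cycles on $C_{s_0}$ lying in the part of $C_{s_0}$ that is ``near $X_v$'' (the complement of the vanishing-cycle annuli). Under the retraction these map back to the chosen basis of $H_1(X_v,\Z)$, so every class in the image of $\bigoplus_v H_1(X_v,\Z)\to H_1(C_0,\Z)$ is hit. For the graph classes, I would pick a cycle in $H_1(G,\Z)\subset \Z^E$, realize it as an alternating sum of edges forming a closed walk $e_{i_1},\dots,e_{i_k}$ in $G$, and build a loop on $C_{s_0}$ by concatenating arcs: on the fibre, near each node $p_{e}$ the two branches of $X_0$ are replaced by a small annulus (the plumbing collar), and one can travel along $X_v$ from one incident collar to the next, then across the collar to the adjacent component, and so on around the walk. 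Retracting this loop onto $C_0$ collapses each collar to the node $p_e$ and yields exactly the corresponding class in $H_1(G,\Z)\subset H_1(C_0,\Z)$.

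Since these two families of cycles together generate $H_1(C_0,\Z)$ — this is precisely the content of the extension $0\to H_1(G,\Z)\to H_1(C_0,\Z)\to \bigoplus_v H_1(X_v,\Z)$ implicit in Proposition \ref{genus}, combined with the fact that the component classes lift — surjectivity of $\mathrm{sp}$ follows. Alternatively, and perhaps more cleanly, one can argue cohomologically: the Clemens--Schmid exact sequence (or the local invariant cycle theorem) for the semistable degeneration identifies $\mathrm{coker}(\mathrm{sp})$, dually, with a subquotient of $H^1(C_0)$ that is forced to vanish because the nearby fibre is a curve and the vanishing cycles span the kernel of $\mathrm{sp}^\vee$; here the Picard--Lefschetz formula \eqref{eq:42} shows the image of $\mathrm{sp}$ is the subspace left invariant after killing the span of the $a_e$, which by a rank count has the right dimension $2g$.

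The main obstacle is making the ``collar walk'' construction precise: one has to check that the local plumbing picture near each node is compatible with the global retraction $\sC\to C_0$, so that the concatenated loop on $C_{s_0}$ genuinely retracts to the intended topological loop on $C_0$ rather than to something homologous-up-to-correction involving the vanishing cycles. This is where shrinking $S$ and choosing the retraction carefully (so that over each $D_e$-neighborhood it is the standard plumbing retraction) does the work. Once the local model is pinned down, the rest is bookkeeping with the exact sequence of Proposition \ref{genus}.
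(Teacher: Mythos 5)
Your proposal is correct and follows essentially the same route as the paper, whose own argument is only a sketch: it cites the Clemens--Schmid sequence for a formal proof and otherwise gives exactly your intuitive construction, modeling a loop in $C_0$ by segments in $C_{s_0}$ that join the vanishing cycles (your ``collar walk''), with classes from the components lifting directly. The only slip is notational: in homology $H_1(G,\Z)$ is a quotient of $H_1(C_0,\Z)$ by $\bigoplus_{v} H_1(X_v,\Z)$ (cf.\ \eqref{eq:aux}), not a subgroup as in your displayed extension, but this does not affect your generation argument.
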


One can give a formal proof based on the Clemens-Schmid exact
sequence, see e.g. \cite{Mor}. Intuitively, a loop in $H_1(C_0,\Z)$
can be broken up into segments which connect double points of the
curve. Since these double points arise by shrinking (vanishing) cycles
on $C_{s_0}$, we can model the segments by segments in $C_{s_0}$ which
connect the vanishing cycles.  Connecting all these segments together
yield a loop in $C_{s_0}$ which specializes to the given loop in
$C_0$.

\smallskip

Let $A\subset H_1(C_{s_0},\Z)$ denote the subspace spanned by the
vanishing cycles $a_e$. Observe that we have an exact sequence
\begin{equation*}
0\to A \to H_1(C_{s_0}, \Z) \xrightarrow{\mathrm{sp}} H_1(C_0, \Z) \to 0.
\end{equation*}

Define $A'=A + \mathrm{sp}^{-1}(\bigoplus_{v \in V} H_1(X_v, \Z))\subseteq H_1(C_{s_0}, \Z)$. Using Proposition \ref{genus}, we have
\begin{equation}
 \label{eq:aux}
H_1(C_{s_0}, \Z)/A' \simeq H_1(C_0, \Z) / \bigoplus_{v \in V} H_1(X_v, \Z)\simeq H_1(G, \Z).
\end{equation}

\begin{lem}\label{lem_isotropic} The subspace $A\subset
  H_1(C_{s_0},\Z)$ defined by the vanishing cycles is isotropic and
  has rank $h$. In particular, it is maximal isotropic if the genera
  of all the components $X_v$ are zero.
\end{lem}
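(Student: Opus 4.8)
The plan is to prove the two assertions in order: first that $A$ is isotropic, then that $\rk A = h$, and finally deduce the maximality statement as a bookkeeping consequence of the rank count.

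\textbf{Isotropy.} The key input is the Picard--Lefschetz formula \eqref{eq:42}: the monodromy $T_e$ around $D_e$ acts by $\beta \mapsto \beta - \langle \beta, a_e\rangle a_e$. First I would observe that the intersection pairing on $H_1(C_{s_0}, \Z)$ is monodromy invariant (it is topological), so $\langle T_e\beta, T_e\gamma\rangle = \langle \beta, \gamma\rangle$ for all $\beta, \gamma$. Expanding the left-hand side using \eqref{eq:42} and using antisymmetry of the pairing (so $\langle a_e, a_e\rangle = 0$ automatically), one gets the identity
\begin{displaymath}
-\langle \beta, a_e\rangle\langle a_e, \gamma\rangle - \langle \beta, a_e\rangle\langle a_e, \gamma\rangle + \langle \beta, a_e\rangle\langle a_e, \gamma\rangle\langle a_e, a_e\rangle = 0,
\end{displaymath}
which says nothing new by itself. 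The cleaner route is: the assumption that $\ell_e$ is contractible in $S \setminus \bigcup_{\varepsilon \neq e} D_\varepsilon$ means the loops $\ell_e$ pairwise commute up to homotopy, hence the $T_e$ commute. Commuting two Dehn twists $T_e, T_{e'}$ and comparing via Picard--Lefschetz forces $\langle a_e, a_{e'}\rangle a_{e'} $ and $\langle a_e, a_{e'}\rangle a_e$ to be compatible in a way that, since distinct vanishing cycles are not proportional, yields $\langle a_e, a_{e'}\rangle = 0$. Together with $\langle a_e, a_e\rangle = 0$ (antisymmetry), this gives that $A$, the span of the $a_e$, is isotropic.

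\textbf{Rank.} I would use the specialization exact sequence displayed just before the lemma, $0 \to A \to H_1(C_{s_0},\Z) \xrightarrow{\mathrm{sp}} H_1(C_0,\Z) \to 0$, which identifies $A$ with $\ker(\mathrm{sp})$. Hence $\rk A = \rk H_1(C_{s_0},\Z) - \rk H_1(C_0, \Z) = g - \sum_{v} g(X_v)$, where I use that $C_{s_0}$ is a smooth genus $g$ curve and that $H_1(C_0, \Z)$ has rank $\sum_v 2g(X_v)$ over $\Z$ — wait, more carefully: $\mathrm{rk}\, H_1(C_0,\Z)$ is the first Betti number of $C_0$, which by Proposition \ref{genus} (or directly) is $h + \sum_v g(X_v) \cdot$? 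No: the \emph{first Betti number} $b_1(C_0) = 2h + \sum_v 2g(X_v)$ is wrong too. Let me instead argue with ranks of the free parts: $\dim_\Q H_1(C_{s_0},\Q) = 2g$, and from the sequence $\dim_\Q A = 2g - \dim_\Q H_1(C_0,\Q)$. Using Proposition \ref{genus}, $g = h + \sum_v g(X_v)$ and $\dim_\Q H_1(C_0,\Q) = 2\sum_v g(X_v) + h$ (the $h$ coming from the graph cycles, which are \emph{not} killed in $H_1$ — only in $H^1(\mathcal{O})$ they pair up), so $\dim_\Q A = 2g - 2\sum_v g(X_v) - h = 2h + 2\sum_v g(X_v) - 2\sum_v g(X_v) - h = h$. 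So $\rk A = h$.

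\textbf{Maximality.} Since the intersection pairing on $H_1(C_{s_0},\Z)$ is a perfect symplectic (antisymmetric, unimodular) form on a rank $2g$ lattice, maximal isotropic subspaces have rank $g$. When all $g(X_v) = 0$ we have $h = g$, so the isotropic subspace $A$ of rank $h = g$ is maximal. The main obstacle I anticipate is the isotropy argument: extracting $\langle a_e, a_{e'}\rangle = 0$ purely from commutativity of the twists requires knowing the $a_{e'}$ are not scalar multiples of $a_e$ for $e \neq e'$ — which should follow because they specialize to independent classes under $\mathrm{sp}^{-1}$ of distinct graph-theoretic data, or alternatively one can avoid this by invoking that the $T_e$ together generate the image of $\pi_1$ of the punctured polydisc and the global monodromy preserves a polarization, forcing $A$ isotropic by the general theory of degenerations (the Clemens--Schmid / limit mixed Hodge structure weight filtration has $A = W_0 H_1$, which is isotropic for the polarization). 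I would present the Hodge-theoretic one-liner as the clean argument and keep the Picard--Lefschetz computation as motivation.
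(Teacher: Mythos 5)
Your proof is correct, but the isotropy half takes a genuinely different route from the paper. The paper's argument is purely geometric and one line long: for $s_0$ close to $0$ the vanishing cycles $a_e$ can be represented by pairwise disjoint circles, since they collapse onto the distinct nodes $p_e\in C_0$; hence $\langle a_e,a_{e'}\rangle=0$, and $\langle a_e,a_e\rangle=0$ by antisymmetry. You instead use that $\pi_1(U)\simeq\Z^E$ is abelian, so the monodromy transvections commute, and extract orthogonality from the Picard--Lefschetz formula \eqref{eq:42}: the commutator applied to $\beta$ equals $\langle a_e,a_{e'}\rangle\bigl(\langle\beta,a_e\rangle a_{e'}-\langle\beta,a_{e'}\rangle a_e\bigr)$, and its vanishing for all $\beta$ forces $\langle a_e,a_{e'}\rangle=0$. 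Your worry about proportional vanishing cycles is unnecessary: if $a_{e'}$ were a multiple of $a_e$, antisymmetry already gives $\langle a_e,a_{e'}\rangle=0$ (alternatively, take $\beta=a_{e'}$ in the commutator identity), so the algebraic argument is self-contained. What each approach buys: the paper's is shorter and visibly tied to the geometry of the degeneration, while yours only uses the Picard--Lefschetz formula and the abelianness of $\pi_1(U)$, so it would apply verbatim in settings where one does not want to choose disjoint geometric representatives. The rank computation and the maximality bookkeeping are essentially identical to the paper's: both rest on the specialization sequence $0\to A\to H_1(C_{s_0},\Z)\to H_1(C_0,\Z)\to 0$, the equality $\rk H_1(C_0,\Z)=h+2\sum_v g(X_v)$, and $g=h+\sum_v g(X_v)$ from Proposition \ref{genus} (the paper packages this via \eqref{eq:aux}). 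One caution: the Clemens--Schmid ``one-liner'' you propose to present as the clean argument is the weakest part of your write-up. Showing that $\operatorname{Im}(N)$ is isotropic uses $N^2=0$, but $N^2=0$ for $N=\sum_e y_eN_e$ is equivalent to the very orthogonality relations $\langle a_e,a_{e'}\rangle=0$ you are trying to prove (unless you import the monodromy theorem for curves), and identifying $A$ with $\operatorname{Im}(N)$ or with a step of the weight filtration essentially requires the rank statement as well. Keep the transvection computation (or the paper's disjointness observation) as the actual proof and relegate the Hodge-theoretic remark to motivation.
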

\begin{proof} For $s_0$ very close to $0\in S$, the vanishing cycles
  $a_e$ become disjoint since they approach different singular points
  $p_e\in C_0$.  Thus, $\langle a_e, a_{e'}\rangle = 0$. The pairing
  on $H_1$ being symplectic, we automatically have
  $\langle a_e,a_e\rangle=0$, which shows that the subspace $A$ is
  isotropic.

  To prove the claim about the dimension, note that \eqref{eq:aux}
  implies
  \begin{align*}
    h&=\rk H_1(C_0, \Z) - 2\sum_{v\in V}g(X_v)=\rk H_1(C_{s_0}, \Z) - \rk A- 2\sum_{v\in V}g(X_v)\\ &=2g(C_{s_0})-\rk A- 2\sum_{v\in V}g(X_v)= 2h -\rk A. 
  \end{align*}
  It follows that $\rk A = h$.
\end{proof}

The same reasoning as above implies that $\langle A,A'\rangle =0$,
so the symplectic pairing reduces to a pairing
$A \times \Bigl(H_1(C_{s_0}, \Z)/A'\Bigr) \rightarrow \Z$.

\smallskip

Write $N_e = \ell_e-\id$. Note that by \eqref{eq:42}, we have
$N_e(\beta)=\langle \beta,a_e\rangle a_e$ for any
$\beta \in H_1(C_{s_0}, \Z)$. Thus the image of $N_{e}$ is contained
in $A$. By Lemma~\ref{lem_isotropic}, we get
that $N_{e}$ vanishes in $A$. Thus 
$N_e^2=0$ which shows that $N_e=\log(\ell_e)$. Consider now the
composition
\begin{equation*}
H_1(C_0, \Z) \simeq H_1(C_{s_0}, \Z)/A \xrightarrow{N_e} A \simeq
(H_1(C_{s_0}, \Z)/A')^\vee \simeq H_1(G, \Z)^\vee.
\end{equation*}
Note that, by Picard-Lefschetz, all the elements in
$$ 
\mathrm{sp}^{-1}(\bigoplus_v H_1(X_v, \Z))\subset H_1(C_{s_0}, \Z)
$$ 
are in the kernel of $N_e$, so in fact the above map passes to the
quotient to give a map
\begin{equation}\label{45bis}
  H_1(G, \Z) \simeq H_1(C_{s_0}, \Z)/A' \xrightarrow{N_e} H_1(G, \Z)^\vee.
\end{equation}

\smallskip 
The following proposition provides the relation between the
monodromy and the combinatorics of the graph polynomials.

\begin{prop}The bilinear form on $H_1(G, \Z)$ given by \eqref{45bis}
  coincides with the bilinear form $\langle\cdot\,,\cdot\rangle_e$.
\end{prop}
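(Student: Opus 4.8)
The plan is to unwind all the identifications in \eqref{45bis} and match the resulting pairing, edge by edge, against the rank-one form $\langle\cdot\,,\cdot\rangle_e$ on $H_1(G,\Z)$, which by \eqref{eq:1} is $e^{\vee,2}$ for the embedding $H_1(G,\Z)\inj \Z^E$. Concretely, I would fix a class $\beta\in H_1(C_{s_0},\Z)$ and, using Picard--Lefschetz \eqref{eq:42}, compute $N_e(\beta)=\langle\beta,a_e\rangle\, a_e$. The map in \eqref{45bis} then reads: take $\gamma\in H_1(G,\Z)$, lift it to $\beta\in H_1(C_{s_0},\Z)$ (well defined modulo $A'$), apply $N_e$ to land in $A$, and then reinterpret $A\simeq (H_1(C_{s_0},\Z)/A')^\vee\simeq H_1(G,\Z)^\vee$ via the symplectic pairing. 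Pairing this against a second class $\gamma'\in H_1(G,\Z)$ lifted to $\beta'$ gives $\langle N_e(\beta),\beta'\rangle = \langle\beta,a_e\rangle\,\langle a_e,\beta'\rangle$, so the bilinear form in \eqref{45bis} is $(\gamma,\gamma')\mapsto \langle\beta,a_e\rangle\,\langle\beta',a_e\rangle$, manifestly of rank one and symmetric, with the sign absorbed in the identification $A\simeq (H_1/A')^\vee$ chosen so that $a_e\mapsto$ the functional $\langle a_e,-\rangle$.

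The heart of the matter is then to identify the linear functional $\gamma\mapsto \langle\beta,a_e\rangle$ on $H_1(G,\Z)\simeq H_1(C_{s_0},\Z)/A'$ with $e^\vee$, i.e.\ with the $e$-th coordinate functional coming from the embedding $H_1(G,\Z)\inj\Z^E$ in the exact sequence \eqref{eq:6}. This should follow from the topological picture behind the specialization map: under $\mathrm{sp}$, a $1$-cycle $\beta$ on the nearby fibre retracts to a $1$-cycle on $C_0$, and its class in $H_1(G,\Z)$ records, for each edge $e$, the signed number of times the cycle traverses the vanishing annulus around $p_e$ — which is precisely the intersection number $\langle\beta,a_e\rangle$ of $\beta$ with the vanishing cycle $a_e$ of that annulus (up to the orientation conventions fixed in the Picard--Lefschetz formula and in \eqref{eq:6}). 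I would make this precise by choosing a neighborhood of each node modeled on $xy=t_e$, whose nearby fibre is an annulus with core curve $a_e$; the composite $H_1(C_{s_0},\Z)\to H_1(G,\Z)\to \Z$ extracting the $e$-coordinate is computed by restricting to this annulus and taking the winding/intersection number, giving exactly $\langle -, a_e\rangle$.

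The main obstacle, and the place where care is needed, is bookkeeping of signs and orientations: the Picard--Lefschetz formula \eqref{eq:42} fixes an orientation of the vanishing cycle $a_e$ only up to sign (and the formula is insensitive to it, since it appears quadratically), whereas $e^\vee$ depends on the orientation of the edge $e$ chosen in \eqref{eq:6}. Since the form $\langle\cdot\,,\cdot\rangle_e = e^{\vee,2}$ is quadratic in $e^\vee$, these sign ambiguities cancel, so the identification is in fact canonical; but I would need to verify explicitly that the orientation of $a_e$ induced by the local model $xy=t_e$ (with its complex orientation) is compatible with the edge orientation used to define $\partial$ in \eqref{eq:6}, or else observe that the statement is independent of this choice precisely because both sides are rank-one \emph{quadratic} forms. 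Once the functional $\gamma\mapsto\langle\beta,a_e\rangle$ is identified with $\pm e^\vee$ up to this harmless sign, squaring gives $\langle\cdot\,,\cdot\rangle_e$, completing the proof.
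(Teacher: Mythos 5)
Your proposal is correct and follows essentially the same route as the paper: the paper's proof likewise reduces everything to the identification that the image of a cycle $b$ in $H_1(C_{s_0},\Z)/A'\simeq H_1(G,\Z)$ is the loop $\sum_e n_e e$ with $n_e=\langle b,a_e\rangle$, after which both quadratic forms evaluate to $n_e^2$ so all sign/orientation ambiguities cancel. The only difference is one of emphasis: you spell out the dualization $A\simeq (H_1(C_{s_0},\Z)/A')^\vee$ and propose to justify the key coordinate identification via the local model $xy=t_e$, a point the paper treats as implicit in its discussion of the specialization map.
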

\begin{proof} For any $b\in H_1(C_{s_0}, \Z)$, the image of
  $\mathrm{sp}(b)$ in the quotient
  $H_1(C_{s_0}, \Z)/A' \simeq H_1(G, \Z)$ can be identified with a
  loop $\gamma=\sum_e n_ee$, where $n_e= \langle b,a_e\rangle$ denotes
  the multiplicity of intersection of $b$ with the vanishing cycle
  $a_e$. The quadratic form on $H_1(C_{s_0}, \Z)$ associated to $N_e$
  sends $b$ to $\langle b, \langle b,a_e\rangle a_e\rangle =
  n_e^2$. On the other hand, the bilinear form
  $\langle\cdot\,,\cdot\rangle_e$ on $H_1(G, \Z)$ corresponding to the
  edge $e$ sends the loop $\gamma$ to $n_e^2$, from which the
  proposition follows.
\end{proof}

 Fix a 
 symplectic basis $a_1,\dotsc,a_{g},b_1,\dotsc,b_{g}$ for
 $H_1(C_{s_0}, \Z)$ such that $a_1, \dots, a_h$ form a basis of
 $A$. For any $e\in E$, we can write  
\begin{equation}\label{45a}
a_e = \sum_{i=1}^h c_{e,i}a_i.
\end{equation}
 
Let $B \subset H^1(C_{s_0}, \Z)$ be the subspace generated by $b_1,
\dots, b_h$. Note that we have isomorphisms $B \simeq A^{\vee} \simeq
H_1(G, \Z)$. Thus, we can see the monodromy operators as maps
\begin{displaymath}
  N_{e}\colon A \longrightarrow B.
\end{displaymath}
The following proposition is straightforward. 

\begin{prop}\label{rmk25} In terms of the basis $b_1,\dots, b_h$ for $B\simeq H_1(G, \Z)$, we can write $M_e=\Bigl(c_{e,i}c_{e,j}\Bigr)_{1\leq i,j\leq h}$.
\end{prop}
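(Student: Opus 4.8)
The plan is to carry out the explicit computation to which the preceding proposition reduces us, feeding in the expansion \eqref{45a} of the vanishing cycles. By that proposition, the entries of $M_e$ are the values of the bilinear form $\langle\cdot\,,\cdot\rangle_e$ on $H_1(G,\Z)$, computed in the basis obtained by transporting $b_1,\dots,b_h$ along $B\simeq A^\vee\simeq H_1(G,\Z)$, and $\langle\cdot\,,\cdot\rangle_e$ is the bilinear form attached to $N_e$ through \eqref{45bis}. So it suffices to write $N_e$ in the symplectic basis and read off the resulting Gram matrix.

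First I would match up the two descriptions of $H_1(G,\Z)$ in play: the one of \eqref{eq:aux}, namely $H_1(C_{s_0},\Z)/A'\simeq H_1(G,\Z)$, and the one $B\simeq A^\vee\simeq H_1(G,\Z)$ appearing in the statement. A rank count gives $\rk A'=2g-h=\rk A^\perp$; since $\langle A,A'\rangle=0$ forces $A'\subseteq A^\perp$, this yields $A'=A^\perp$ (over $\K$, which is all that is needed here). Non-degeneracy of the symplectic pairing between $A=\langle a_1,\dots,a_h\rangle$ and $B=\langle b_1,\dots,b_h\rangle$ gives $B\cap A^\perp=0$, so the composite $B\hookrightarrow H_1(C_{s_0},\Z)\twoheadrightarrow H_1(C_{s_0},\Z)/A'$ is an isomorphism; a routine check shows it is compatible with the duality isomorphisms above, hence is the isomorphism $B\simeq H_1(G,\Z)$ of the statement. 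Thus $M_e$ is to be computed in the basis $\bar b_1,\dots,\bar b_h$ of $H_1(C_{s_0},\Z)/A'$.

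Next I would evaluate the form. By the proof of the preceding proposition, under $H_1(G,\Z)\simeq H_1(C_{s_0},\Z)/A'$ the form $\langle\cdot\,,\cdot\rangle_e$ is the symmetric bilinear form $(\bar x,\bar y)\mapsto\langle x,a_e\rangle\langle y,a_e\rangle$ attached to $N_e(\beta)=\langle\beta,a_e\rangle a_e$ (whose quadratic form $\bar b\mapsto\langle b,a_e\rangle^2$ is the one computed there, and which determines the bilinear form since $2$ is invertible). Substituting $a_e=\sum_{k=1}^h c_{e,k}a_k$ from \eqref{45a} and using that $a_1,\dots,a_g,b_1,\dots,b_g$ is symplectic, say with $\langle a_i,b_j\rangle=\delta_{ij}$, one gets $\langle b_i,a_e\rangle=-c_{e,i}$ for $1\le i\le h$. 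Hence the $(i,j)$ entry of $M_e$ equals $\langle b_i,a_e\rangle\langle b_j,a_e\rangle=c_{e,i}c_{e,j}$, which is the claim.

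The only genuinely delicate point is keeping the bookkeeping of signs and of the various duality isomorphisms — $B\simeq A^\vee$, $A\simeq(H_1(C_{s_0},\Z)/A')^\vee$, \eqref{eq:aux}, and the Picard-Lefschetz sign in \eqref{eq:42} — mutually coherent, so that the sign conventions cancel out to give $+c_{e,i}c_{e,j}$ rather than $-c_{e,i}c_{e,j}$. This is in fact forced, since the conclusion must be consistent with $\langle\cdot\,,\cdot\rangle_e=e^{\vee,2}$ being the square of the linear functional $e^\vee$ (as built into Section~\ref{sec:sym} and confirmed by the preceding proposition). Beyond that, the argument is mechanical.
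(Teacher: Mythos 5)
Your argument is correct and is exactly the computation the paper has in mind: it declares the proposition ``straightforward'' and omits the proof, the intended content being precisely your combination of the preceding proposition with the expansion \eqref{45a} and the symplectic relations, so that the $(i,j)$ entry is $\langle b_i,a_e\rangle\langle b_j,a_e\rangle=(\pm c_{e,i})(\pm c_{e,j})=c_{e,i}c_{e,j}$. Your extra care with the identification $B\simeq H_1(C_{s_0},\Z)/A'\simeq H_1(G,\Z)$ and with the sign cancellation only makes explicit what the paper leaves implicit.
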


We will denote by $A_{0}$ (respectively $B_{0}$) the subspace of
$H^1(C_{s_0}, \Z)$ generated by $a_{1},\dots ,a_{g}$ (respectively
$b_{1},\dots,b_{g}$). The $A_{0}$ is a maximal isotropic subspace
with $A\subset A_{0}\subset A'$.

\section{Archimedean heights and the Poincar\'e bundle}
\label{sec:arch-heights-poinc}

In this section, we recall the definition of the archimedean height pairing between degree zero divisors with disjoint support on a smooth projective curve, as well as its interpretation in terms of biextensions and the Poincar\'e bundle. Throughout, given a set $\Sigma$ and a ring $R$, we denote by $(\bigoplus_{\Sigma} R )^0$ the set of elements $(r_s) \in \bigoplus_{\Sigma} R$ with $\sum_s r_s =0$. 

\subsection{Archimedean heights}
\label{sec:archimedean-heights}
Let $C$ be a smooth projective curve over the field of complex numbers and $\Sigma \subset C$ a finite set of points in $C$, which we also think of as a reduced effective divisor. The inclusion $j \colon C \setminus \Sigma \hookrightarrow C$ yields an exact sequence of mixed Hodge structures:
\begin{equation}\label{r-mix}
0 \to H^1(C, \Z(1)) \stackrel{j^\ast}{\longrightarrow} H^1(C \setminus \Sigma, \Z(1)) \longrightarrow (\bigoplus_{\Sigma} \Z )^0 \to 0.
\end{equation}

\begin{lem}\label{lemm:1} The exact sequence of real mixed Hodge
  structures obtained from \eqref{r-mix} by tensoring with $\R$ is
  canonically split.  
\end{lem}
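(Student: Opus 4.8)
The plan is to pin down the splitting explicitly and then deduce canonicity from a uniqueness statement. First observe what kind of extension this is. On $H^{1}(C\setminus\Sigma,\R(1))$ the weight filtration has $W_{-1}=H^{1}(C,\R(1))$, pure of weight $-1$, and $\gr^{W}_{0}=(\bigoplus_{\Sigma}\R)^{0}$, pure of type $(0,0)$ (it is the kernel of the trace map on $H^{2}_{\Sigma}(C,\R(1))\cong\bigoplus_{\Sigma}\R(0)$). So after $\otimes\,\R$ the sequence \eqref{r-mix} is an extension, in the category $\mathrm{MHS}_{\R}$ of real mixed Hodge structures, of a type-$(0,0)$ structure by a pure weight-$(-1)$ structure, and a splitting is the same datum as a sub-Hodge structure $L\subseteq H^{1}(C\setminus\Sigma,\R(1))$ — necessarily pure of type $(0,0)$ — mapping isomorphically onto the quotient. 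Any two such $L$ differ by an element of $\Hom_{\mathrm{MHS}_{\R}}\big((\bigoplus_{\Sigma}\R)^{0},H^{1}(C,\R(1))\big)$, which vanishes because morphisms of mixed Hodge structures are strict for the weight filtration and the two structures are pure of distinct weights. Hence a splitting, if it exists, is unique, and then automatically functorial in $(C,\Sigma)$, i.e.\ canonical; so it only remains to construct one.

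For existence I would write $V=H^{1}(C\setminus\Sigma,\C(1))$, let $F^{\bullet}$ be its Hodge filtration and $W_{\bullet,\C}$ its (complexified) weight filtration, and set
\[
 L_{\C}:=F^{0}V\cap\overline{F^{0}V}\subseteq V .
\]
This subspace is stable under complex conjugation, hence of the form $L_{\C}=L_{\R}\otimes_{\R}\C$ for a real subspace $L_{\R}\subseteq H^{1}(C\setminus\Sigma,\R(1))$; I would take $L:=L_{\R}$. It is of type $(0,0)$: one has $L_{\C}\subseteq F^{0}V$ by construction, while $F^{1}V=F^{2}H^{1}(C\setminus\Sigma,\C)=0$ because $C\setminus\Sigma$ is a curve, so $F^{1}L_{\C}=0$. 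That $L$ maps isomorphically onto $\gr^{W}_{0}$ is a dimension count resting on strictness of $F^{\bullet}$ with respect to $W_{\bullet}$ on the mixed Hodge structure $V$: strictness identifies $F^{0}V\cap W_{-1,\C}$ and $\overline{F^{0}V}\cap W_{-1,\C}$ with the two (complex conjugate, half-dimensional, transverse) Hodge pieces of the pure weight-$(-1)$ structure $H^{1}(C,\C(1))$, so $L_{\C}\cap W_{-1,\C}=0$; on the other hand $F^{0}V$ and $\overline{F^{0}V}$ each surject onto $\gr^{W}_{0}V$ (which equals its own $F^{0}$), forcing $F^{0}V+\overline{F^{0}V}=V$ and hence $\dim_{\C}L_{\C}=\dim_{\C}\gr^{W}_{0}V$. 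Combining the two gives $L_{\C}\xrightarrow{\ \sim\ }\gr^{W}_{0}V$, so $L$ is the desired splitting.

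Conceptually, what makes all of this go through is the vanishing of $\Ext^{1}$ in $\mathrm{MHS}_{\R}$ between the graded pieces: for $H$ pure of weight $-1$, the standard description of extension groups gives $\Ext^{1}_{\mathrm{MHS}_{\R}}(\R(0),H)=H_{\C}/(F^{0}H_{\C}+H_{\R})$, and this vanishes because in weight $-1$ one has $H_{\C}=F^{0}H_{\C}\oplus\overline{F^{0}H_{\C}}$ while $H_{\R}$ meets $F^{0}H_{\C}$ trivially and therefore maps isomorphically onto $H_{\C}/F^{0}H_{\C}$. I do not expect a genuine obstacle — the only substantive point is this two-line Hodge-theoretic computation, or equivalently the transversality statement used in the previous paragraph. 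It is, however, worth stressing that the lemma is false before tensoring with $\R$ (over $\Q$ the relevant $\Ext^{1}_{\mathrm{MHS}_{\Q}}(\Q(0),H)$ is a nonzero real torus, which is precisely where Jacobians enter the picture), so the statement is a purely archimedean phenomenon: the canonical splitting over $\R$ of a weight filtration with weights only $-1$ and $0$.
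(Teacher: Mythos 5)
Your proposal is correct and is essentially the paper's own argument: both produce the splitting from the subspace $F^0\cap\overline{F^0}$ of the complexification, using that this subspace meets the pure weight $-1$ part trivially and surjects onto the weight $0$ quotient. The only cosmetic differences are that the paper first reduces to an extension with quotient $\R(0)$ while you work with the whole sequence, and you spell out the uniqueness/dimension-count details explicitly.
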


\begin{proof} It suffices to show that an extension of real mixed
  Hodge structures of the form $0 \to H \to E \xrightarrow{\alpha}
  \R(0) \to 0$, where $H$ is pure of weight $-1$, is canonically
  split. For this, we consider the subspace $M=F^0 E_{\C} \cap
  \overline{F^0 E_{\C}}$ of $E$. The map $\alpha$ induces a surjection
  $M \to \R$ with kernel $M \cap H$. Since $H$ has weight $-1$, this
  intersection is empty. We thus get an isomorphism whose inverse
  map provides the splitting. 
\end{proof}

Recall that the Hodge filtration on $H^1(C \setminus \Sigma, \C)$
comes from the exact sequence of sheaves  
\begin{equation}\label{poin-res}
0 \to \Omega^1_{C} \xrightarrow{}  \Omega^1_{C}(\log \Sigma)
\xrightarrow{\Res_{\Sigma }} \bigoplus_{\Sigma} \underline{\C} \to 0,  
\end{equation} 
were $\Res_{\Sigma }=\sum_{p\in \Sigma }\Res_{p}$, and $\Res_{\Sigma}$ is defined, for a local section $\omega$, by
\begin{displaymath}
  \Res_{p}(\omega )=\frac{1}{2\pi i}\int_{\gamma _{p,\varepsilon }}\omega ,
\end{displaymath}
for a small negatively oriented circle $\gamma _{p,\varepsilon }$
around the point $p$. We take $\gamma _{p,\varepsilon }$ negatively
oriented because we want it to be part of the bounday of the
complement of a small disk around $p$ and not of the disc itself. 

\smallskip

From the exact sequence \eqref{poin-res} we deduce that $F^0 H^1(C
\setminus \Sigma, 
\C(1))=H^0(C, \Omega^1_C(\log \Sigma))$. Combining this information
with Lemma~\ref{lemm:1}, we get a canonical map
\begin{align*}
(\bigoplus_{\Sigma} \R )^0 &\longrightarrow H^0(C, \Omega^1_{C}(\log
\Sigma))\cap H^1(C \setminus \Sigma,
\R(1)) \\
\oD &\longmapsto \omega_{\oD,\R}.
\end{align*}

\begin{remark}\label{omega-uniq} Concretely, $\omega_{\oD,\R}$ can be
  understood as follows. By \eqref{poin-res}, the condition
  $\textrm{Res}_{\Sigma} \omega_{\oD}=\oD$ determines a logarithmic
  differential $\omega_{\oD}$
  only up to addition of elements in $H^0(C, \Omega^1_C)$. To fix it
  uniquely, we require that $\int_\gamma \omega_{\oD,\R} \in \R(1)$ for
  every real valued cycle $\gamma$ in $C \setminus \Sigma$.  Note that $\omega
  _{\oD,\R}$ is an ``admissible integral'' in the sense of
  \cite[Def. 3.3.5]{Hain}.
\end{remark}

Let $\mathfrak A$ be a degree zero $\R$-divisor on $C$ with support
$\Sigma$ and let $\omega _{\oA,\R}$ the form just defined. Given
another degree zero $\R$-divisor $\mathfrak B$ with 
disjoint support, we can find a real-valued $1$-chain
$\gamma_{\mathfrak{B}}$ on $C \setminus \Sigma$ such that $\mathfrak B
= \partial \gamma_{\mathfrak{B}}$.

\begin{defn} The archimedean height pairing between $\oA$ and
  $\oB$ is the real number 
\begin{equation}\label{height-def}
\langle \oA, \oB
  \rangle=\mathrm{Re}\left(\int_{\gamma_{\oB}} \omega_{\oA,\R}\right).
\end{equation}
\end{defn}

Note that, since $\omega_{\oA, \R}$ is an $\R(1)$-class, modifying
$\gamma_{\mathfrak B}$ by an element of $H_1(C \setminus \Sigma,\R)$
does not change the real part of the integral. Therefore the above
definition is independent of the choice of $\gamma_{\mathfrak
  B}$. Though not apparent from \eqref{height-def}, the archimedean
height pairing is symmetric.

\begin{ex}\label{ex:heigh-div} When the divisor $\mathfrak{A}$ is of
  the form $\mathrm{div}(f)$ for a rational function $f$ on $C$, the
  differential $\omega_{\mathfrak{A}, \R}$ is nothing else than
  $-\frac{df}{f}$, hence 
\begin{equation}
\langle \mathfrak{A}, \mathrm{div}(f) \rangle=\mathrm{Re} \left(\int_{\gamma_{\oB}} -d\log |f| \right)
=-\log |f(\mathfrak{B})|. 
\end{equation} 
\end{ex}

\smallskip

Finally, consider the case of divisors with values in space-time
$\R^D$ with a given Minkowski metric. Tensoring with $\R^D$ and using
the Minkowski metric, the archimedean height pairing extends to a
pairing between 
degree zero $\R^D$-valued divisors with disjoint support.

\subsection{Biextensions}\label{sec:biext} 

Let $\mathfrak A$ and $\mathfrak B$ be integer-valued degree zero divisors with disjoint supports $|\mathfrak A|$ and $|\mathfrak B|$ on $C$. In this paragraph, we recall how to attach to $\mathfrak A$ and $\mathfrak B$ a mixed Hodge structure $M$ with weights $-2, -1, 0$ and graded pieces
$$
\gr_{-2}^W M=\Z(1), \quad \gr^W_{-1}M=H^1(C, \Z(1)), \quad \gr_0^WM=\Z(0).
$$ Such mixed Hodge structures are called \textit{biextensions}. The standard reference is Section 3 of Hain's paper \cite{Hain}. 

\smallskip

We first observe that pulling back the exact sequence \eqref{r-mix} by the map $\Z \rightarrow \left(\bigoplus_{|\mathfrak A|} \Z\right)^0$ which sends $1$ to the divisor $\mathfrak{A}$, we get an extension 
$$
H_{\mathfrak{A}} \in \Ext^1_{\mathrm{MHS}}
\Bigl(\,\Z(0),H^1(C,\Z(1))\,\Bigr)
$$ which fits into a diagram 
\begin{equation}\label{ext-diag}
\xymatrix{ 0 \ar[r] & H^1(C, \Z(1))\ar[r] \ar@{=}[d] & H^1(C\setminus |\mathfrak A|, \Z(1)) \ar[r] & \left(\bigoplus_{|\mathfrak{A}|} \Z\right)^0 \ar[r] & 0\\ 0 \ar[r] & H^1(C, \Z(1))\ar[r] & H_{\mathfrak{A}} \ar[r] \ar[u]^{\gamma} & \Z(0) \ar[u] \ar[r] & 0.
}
\end{equation}
Abusing notation, $H_{\mathfrak{A}}$ is also denoted by $H^1(C\setminus \mathfrak A, \Z(1))$.

\smallskip

Similarly, from the cohomology of $C$ relative to $|\mathfrak B|$ we obtain an exact sequence of mixed Hodge structures 
\begin{equation}\label{relative}
0\rightarrow \mathrm{coker}\Bigl( \Z \to \bigoplus_{|\mathfrak B|} \Z\Bigr)\rightarrow H^1(C, |\mathfrak B|;\Z) \rightarrow H^1(C, \Z) \to 0.
\end{equation}
Pushing forward \eqref{relative} by the map $\mathrm{coker}\Bigl(\,\Z \to \bigoplus_{|\mathfrak B|}\Z\,\Bigr) \to \Z$ given by the coefficients of $\mathfrak B$ and tensoring by $\Z(1)$, we get an element 
$$
H^1(C, \mathfrak B;\Z(1)) \in \Ext^1_{MHS}\Bigl(H^1(C, \Z(1))\,,\,\Z(1) \Bigr).
$$

\begin{remark} \rm Applying $\Hom_{MHS}(-, \Z(1))$ to this extension and using Poincar\'e duality we get $H_{\mathfrak{B}}$. 

\end{remark}

\smallskip

Since $\mathfrak{A}$ and $\mathfrak{B}$ have disjoint support, replacing $C$ by $C \setminus | \mathfrak{A} |$ in \eqref{relative} and proceeding as before yields another extension 
\begin{equation}\label{ext3}
0 \to \Z(1) \to E \to H^1(C \setminus |\mathfrak{A} |, \Z(1)) \to 0. 
\end{equation}

\begin{defn} The biextension mixed Hodge structure associated to $\mathfrak{A}$ and $\mathfrak{B}$ is the pullback of the extension \eqref{ext3} by the map $\gamma$ in \eqref{ext-diag}. It will be denoted either by $H_{\mathfrak{B}, \mathfrak{A}}$ or by $H^1(C \setminus \mathfrak A, \mathfrak B; \Z(1))$. 
\end{defn}

By construction, $H_{\mathfrak{B}, \mathfrak{A}}$ fits into the diagram  
\begin{equation}\label{eq:70}
\xymatrix{ & & 0 \ar[d] & 0 \ar[d] & \\ 0 \ar[r] & \Z(1) \ar@{=}[d] \ar[r] & H^1(C,\mathfrak{B};\Z(1)) \ar[d] \ar[r] & H^1(C, \Z(1)) \ar[d] \ar[r]& 0\\
0 \ar[r] & \Z(1) \ar[r] & H_{\mathfrak{B}, \mathfrak{A}} \ar[d] \ar[r] & H_{\mathfrak{A}}  \ar[d] \ar[r]& 0 \\
& & \Z(0) \ar@{=}[r]\ar[d] & \Z(0) \ar[d] & \\
& & 0 & 0 &
}
\end{equation}

In particular, the weight filtration is given by 
\begin{equation*}
0=W_{-3} \subset W_{-2}=\Z(1) \subset W_{-1}=H^1(C, \mathfrak{B}; \Z(1)) \subset W_0=H_{\mathfrak{B}, \mathfrak{A}}
\end{equation*} and hence satisfies 
$$
\gr_{-2}^WH_{\mathfrak{B}, \mathfrak{A}}=\Z(1), \quad \gr_{-1}^WH_{\mathfrak{B}, \mathfrak{A}}=H^1(C, \Z(1)), \quad \gr_0^WH_{\mathfrak{B}, \mathfrak{A}}=\Z(0). 
$$

\begin{remark}\
  \begin{enumerate}
  \item By Poincar\'e duality, the biextension $H^1(C \setminus
    \mathfrak{A}, \mathfrak{B}; \Z(1))$ is isomomorphic to the
    biextension $H_1(C \setminus
    \mathfrak{B}, \mathfrak{A}; \Z)$
    which is constructed in the same way, but using homology.
    \smallskip
      \item Going from integral to real coefficients,
    the same construction yields a \textit{real biextension} which
    will be denoted by 
    $$
    H_1(C \setminus \mathfrak{B}, \mathfrak{A}; \R). 
    $$
    It has graded quotients $\R(1)$, $H_1(C, \R)$ and $\R(0)$.
  \end{enumerate}
\end{remark}

\smallskip

\begin{lem}\label{lem:R} The set of isomorphism classes of real
  biextensions with graded quotients $\R(0), H_1(C, \R)$ and $\R(1)$
  is canonically isomorphic to $\R=\C/\R(1)$. Moreover, if we denote
  by $\eta $ the composition of the change of coefficients from $\Z$
  to $\R$ with this isomorphism, then for every pair $\oA$,
  $\oB$ of integer-valued degree zero divisors on $C$ with
  disjoint support the following equality holds
  \begin{displaymath}
    \langle \oA,\oB\rangle = \eta (H_{\oB,\oA}).  
  \end{displaymath}
\end{lem}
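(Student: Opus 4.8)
The plan is to identify explicitly the moduli of real biextensions of $\R(0)$ by $H_1(C,\R)$ by $\R(1)$, and then match the resulting parameter with the height pairing via the integral formula \eqref{height-def}. First I would analyze the extension data. A real biextension $H$ with the prescribed graded pieces carries two subquotient extension classes: the ``lower'' extension $0\to\R(1)\to W_{-1}\to H_1(C,\R)\to 0$ and the ``upper'' extension $0\to H_1(C,\R)\to H/W_{-2}\to\R(0)\to 0$. By Lemma~\ref{lemm:1} (applied on the homology side, or dually) any extension of $\R(0)$ by a pure weight $-1$ structure is canonically split over $\R$, and likewise for $\R(1)$; concretely $\Ext^1_{\mathrm{MHS}_\R}(\R(0),H_1(C,\R))$ vanishes because $J(C)\otimes\R$ has no torsion-free quotient obstruction — more precisely the relevant $\Ext^1$ in the category of \emph{real} mixed Hodge structures is computed by $H_{1,\C}/(F^0 + H_{1,\R})$, which is zero when the weight is odd and the Hodge structure is of weight $-1$ and pure, by the same argument as in Lemma~\ref{lemm:1}. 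Hence both subquotient extensions are trivial, and the only remaining invariant of $H$ is the position of a lift of the weight filtration's splitting relative to the Hodge filtration $F^\bullet H_\C$. This invariant lives in a torsor under $\Hom(\R(0),\R(1))\otimes_\R\C \big/ (\text{integral part}) = \C/\R(1) = \R$, giving the canonical isomorphism with $\R$.

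Next I would make the isomorphism concrete and functorial. A real biextension is determined, up to isomorphism, by choosing a retraction $r\colon H\to\R(1)$ of the inclusion $W_{-2}\hookrightarrow H$ compatible with the Hodge filtration $F^0H_\C$ on the one hand, and a section $s\colon\R(0)\to H$ landing in $F^0\overline{F^0}$ on the other (these exist and are unique by the splitting lemma); the number attached to $H$ is then $r(s(1))\in\C/\R(1)=\R$, which one checks is independent of the auxiliary choices because any two sections/retractions differ by a map through the pure weight $-1$ piece, and such maps are killed after projecting to $\R(1)$ modulo $F^0$. This makes $\eta$ well-defined and clearly natural in $C$ and in the extension data.

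Finally I would verify the compatibility $\langle\oA,\oB\rangle=\eta(H_{\oB,\oA})$ by unwinding the definitions. The biextension $H_{\oB,\oA}=H_1(C\setminus\oB,\oA;\R)$ has a topological generator $\gamma_{\oB}$ lifting the class $1\in\R(0)$, namely a real $1$-chain on $C\setminus|\oA|$ with $\partial\gamma_{\oB}=\oB$; dually, the differential $\omega_{\oA,\R}\in F^0H^1(C\setminus|\oA|,\R(1))$ with $\Res\,\omega_{\oA,\R}=\oA$ furnishes the Hodge-theoretic retraction onto $\R(1)$ via the pairing class $\big[\int_{(\,\cdot\,)}\omega_{\oA,\R}\big]$. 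Pairing the section against the retraction yields precisely $\int_{\gamma_{\oB}}\omega_{\oA,\R}$, and the normalization in Remark~\ref{omega-uniq} (that $\omega_{\oA,\R}$ has purely imaginary periods, i.e. lies in $\R(1)$) guarantees that the ambiguity in $\gamma_{\oB}$ contributes only elements of $\R(1)$, so that the class in $\C/\R(1)=\R$ is exactly $\mathrm{Re}\!\left(\int_{\gamma_{\oB}}\omega_{\oA,\R}\right)=\langle\oA,\oB\rangle$. I expect the main obstacle to be bookkeeping: matching the Poincar\'e-duality identifications between the cohomological description of $H_{\oB,\oA}$ in \eqref{eq:70} and the homological description $H_1(C\setminus\oB,\oA;\R)$ with the correct Tate twists and signs, so that the pairing of section with retraction comes out as the integral $\int_{\gamma_{\oB}}\omega_{\oA,\R}$ on the nose rather than up to a sign or a factor of $2\pi i$; everything else is a direct consequence of the splitting in Lemma~\ref{lemm:1} and the definitions in Section~\ref{sec:biext}.
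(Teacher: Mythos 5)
The paper's own ``proof'' of Lemma~\ref{lem:R} is just a citation of \cite{Hain} (Cor.~3.2.9 for the classification, Prop.~3.3.7 for the comparison with the integral \eqref{height-def}), so a direct unwinding along your lines is a genuinely different route; your first and third paragraphs are on the right track. However, the concrete recipe in your second paragraph fails. Lemma~\ref{lemm:1} only applies when the kernel of the extension is \emph{pure of weight $-1$}; for $0\to W_{-1}H\to H\to\R(0)\to 0$ the kernel also contains the weight $-2$ piece $\R(1)$, and a section of $H\to\R(0)$ landing in $F^0H_\C\cap\overline{F^0H_\C}$ exists \emph{precisely when the biextension class vanishes}. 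Indeed $F^0$ and $\overline{F^0}$ each have dimension $g+1$ inside the $(2g+2)$-dimensional $H_\C$, so their intersection can be zero; in the toy case $g=0$, writing $H_\R=\R a\oplus\R b$ with $W_{-2,\R}=\R b$ and $F^0=\C(a+\lambda b)$, one has $F^0\cap\overline{F^0}\neq 0$ iff $\lambda\in\R$ iff the extension of $\R(0)$ by $\R(1)$ splits. So the nonexistence of your section $s$ is exactly what the invariant measures, and it cannot be used to define it. Worse, if $s(1)\in F^0H_\C$ and the retraction $r$ is required to be compatible with the Hodge filtration, then $r(s(1))\in F^0\C(1)=0$, so the number $r(s(1))$ you propose would be identically zero; the ``ambiguity is killed modulo $F^0$'' argument also does not parse, since $F^0\C(1)=0$.

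The repair is standard and is what your last paragraph implicitly uses. Apply Lemma~\ref{lemm:1} to the quotient $H/W_{-2}$ (kernel $H_1(C,\R)$, pure of weight $-1$) to obtain a canonical real section $\sigma\colon\R(0)\to H/W_{-2}$, and dually to $W_{-1}H$ to split off the $\R(1)$. Over $\sigma(1)$ there is a \emph{unique} lift $x_F\in F^0H_\C$ (unique because $F^0\cap W_{-2,\C}=F^0\C(1)=0$) and a real lift $x_\R\in H_\R$, well defined up to $\R(1)$; the invariant is $x_\R-x_F\in\C(1)/\R(1)\cong\R$, and the torsor argument of your first paragraph then yields the canonical bijection with $\R$ (this is \cite[Cor.~3.2.9]{Hain}). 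With this corrected recipe your computation for $H_{\oB,\oA}$ goes through as you sketch: $\gamma_{\oB}$ provides the real lift, the form $\omega_{\oA,\R}$ of Remark~\ref{omega-uniq} cuts out the $F^0$-lift, its $\R(1)$-normalization makes the ambiguity in $\gamma_{\oB}$ harmless, and one lands on $\mathrm{Re}\int_{\gamma_{\oB}}\omega_{\oA,\R}=\langle\oA,\oB\rangle$, modulo the sign and Tate-twist bookkeeping you already flag, which is exactly the content of \cite[Prop.~3.3.7]{Hain}.
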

\begin{proof}
  The first statement is \cite[Cor. 3.2.9]{Hain} and the second
  is \cite[Prop. 3.3.7]{Hain}. Note that, in this reference, the
  height pairing is defined as the class of the biextension while we
  have defined it as an integral. The content of 
  the \cite[Prop. 3.3.7]{Hain} is that both definitions agree. 
\end{proof}

\subsection{The Poincar\'e bundle} For what follows, it will be more
convenient to reformulate the height pairing in terms of Poincar\'e
bundles. We first recall the construction for a single compact complex
torus $T=V/\Lambda$, where $V$ is a finite dimensional $\C$-vector
space and $\Lambda \subset V$ a cocompact lattice.
By definition, the dual torus $\widehat{T}$ is the quotient $\widehat
T=\widehat{V}/\widehat{\Lambda}$ of the $\C$-vector space
$\widehat{V}= \Hom_{\overline{\C}}(V,\C)$ of $\C$-antilinear
functionals on $V$ by the dual lattice $\widehat{\Lambda}= \{\phi \in
\widehat{V}\ |\ \mathrm{Im}(\phi(\Lambda)) \subset \Z\}$. Observe that
a functional $\phi \in \widehat V$ is uniquely  
determined by its imaginary part $\eta = \mathrm{Im}(\phi) \colon V \rightarrow \R$, thanks to the formula $\phi(v) = \eta(-iv) + i \eta(v)$. 

\smallskip

For $\phi \in \widehat T$, denote by $L_\phi$ the $\C^\times$-bundle on $T$ associated to the  representation of the fundamental group
\begin{equation}\label{cocP}
\pi_1(T)=\Lambda \subset V \xrightarrow{\mathrm{Im}(\phi)} \C \xrightarrow{\exp(2\pi i\cdot)} \C^\times.
\end{equation}

\smallskip

A Poincar\'e bundle $\sP^\times$ is a $\C^\times$-bundle on $T\times
\widehat{T}$, which is uniquely characterized up to isomorphisms by
the following two properties:
\begin{itemize}
 \item[$(i)$] The restriction $\sP^\times|_{\{0\}\times \widehat{T}}$ is trivial. 
 \item[$(ii)$] The restriction $\sP^\times|_{T\times \{\phi\}}$ is $L_\phi$.
\end{itemize} Moreover, if $\sP^\times_1$ and $\sP^\times_2$ are two
$\C^\times$-bundles satisfying conditions (i) and (ii) and we choose
trivializations $\sP^\times_i|_{\{(0,0)\}} \simeq \C^{\times}$, then
there is a unique isomorphism $\sP_1^\times \simeq \sP_2^\times$
compatible with the trivializations. A Poincar\'e bundle $\sP^\times$
together with a trivialization $\sP^\times|_{\{(0,0)\}} \simeq
\C^{\times}$ is called a \textit{rigidified} Poincar\'e bundle.

\smallskip

More generally, if $T\to X$ is a holomorphic family of principally
polarized abelian varieties, the dual abelian varieties fit together into a holomorphic family $\widehat
T\to X$. A
Poincar\'e bundle on the product $\pi \colon
T \times_X \widehat T\to X$ is
a $\C^{\times}$-bundle $\sP^{\times}$ such that 
\begin{itemize}
\item[(i)] The restriction of
$\sP^{\times}$ to each fibre is a Poincar\'e bundle. 
\item[(ii)] The restriction to the zero section $s_{0}\colon X\to
T \times_X \widehat T$ is trivial. 
\end{itemize}
A rigidification of $\sP^{\times} $ is an isomorphism
\begin{displaymath}
  s_{0}^{\ast}\sP^{\times}\simeq \sO^{\times}_{X}.
\end{displaymath}

\smallskip
 To extend the Poincar\'e bundle to the space $\sA_g$ of all
 principally polarized abelian varieties of dimension $g$, first
 recall the construction of $\sA_g$. The Siegel domain is by
 definition  
\begin{equation*}
\H_g = \{\text{ $g\times g$ complex symmetric matrix } \Omega\,\,|\
\rm{Im}(\Omega)>0\,\}.
\end{equation*}
The group $\mathrm{Sp}_{2g}(\R)$ acts on $\H_g$ by 
$$\begin{pmatrix}A & B \\ C & D\end{pmatrix}\Omega = (A\Omega+B)(C\Omega+D)^{-1}. 
$$
The quotient $\sA_g= \mathrm{Sp}_{2g}(\Z)\,\backslash\, \H_g$ is the
Siegel moduli space parametrizing principally polarized abelian
varieties of dimension $g$. As a complex manifold, this quotient is
not smooth due to the existence of elliptic fixed points, but it is a
smooth Deligne-Mumford stack and as such is the fine moduli space of
principally polarized abelian varieties.

\smallskip

Denote by $\mathrm{Row}_g(\C)\simeq \C^g$ and $\mathrm{Col}_g(\C)\simeq \C^g$ the $g$-dimensional vector space of row and column matrices, and 
let \begin{equation*}
\widetilde{X}= \H_g \times \mathrm{Row}_g(\C) \times \mathrm{Col}_g(\C) \times \C.
\end{equation*}
Define the group $\widetilde G$ by  
 \begin{align*}
  \widetilde{G} = \Big\{\begin{pmatrix} 1 & \lambda_1 & \lambda_2 & \alpha \\0 & A & B & \mu_1 \\ 0 & C & D & \mu_2 \\ 0 & 0 & 0 & 1\end{pmatrix}\,\,\Big |\,\, \lambda_i \in \text{Row}_g(\R),\ \mu_j \in 
\text{Col}_g(\R), \alpha \in \C, \nonumber \\ \begin{pmatrix}A & B \\ C & D\end{pmatrix}\in \mathrm{Sp}_{2g}(\R)\Big\}.
 \end{align*}

The space $\widetilde X$ is a homogeneous space for the group $\widetilde G$ with respect to the action given by 
\ml{action:Omega}{\begin{pmatrix} 1 & 0 & 0 & 0 \\0 & A & B & 0 \\ 0 & C & D & 0 \\ 0 & 0 & 0 & 1\end{pmatrix}(\Omega,W,Z,\rho) =((A\Omega+B)(C\Omega+D)^{-1}, \\ W(C\Omega+D)^{-1},\prescript{\mathrm t}{}{(C\Omega+D)}^{-1}Z,\rho-W\prescript{\mathrm t}{}{C}\prescript{\mathrm t}{}{(C\Omega+D)}^{-1}Z),}
\begin{equation}
\label{W}
\begin{pmatrix} 1 & \lambda_1 & \lambda_2 & 0 \\0 & \id_g & 0 & 0 \\ 0 & 0 & \id_g & 0 \\ 0 & 0 & 0 & 1\end{pmatrix}(\Omega,W,Z,\rho) = 
(\Omega, W+\lambda_1\Omega+\lambda_2,Z,\rho+\lambda_1Z), 
\end{equation}
\begin{equation}
\label{action:Z}
\begin{pmatrix} 1 & 0 & 0 & 0 \\0 & \id_g & 0 & \mu_1 \\ 0 & 0 & \id_g & \mu_2 \\ 0 & 0 & 0 & 1\end{pmatrix}(\Omega,W,Z,\rho) = 
(\Omega,W,Z+\mu_1-\Omega\mu_2,\rho-W\mu_2),
\end{equation}
\begin{equation}
\label{action:alpha}\begin{pmatrix} 1 & 0 & 0 & \alpha \\0 & \id_g & 0 &0 \\ 0 & 0 & \id_g &0 \\ 0 & 0 & 0 & 1\end{pmatrix}(\Omega,W,Z,\rho) = 
(\Omega,W,Z,\rho+\alpha). 
\end{equation} 

\smallskip

Denote by $\widetilde{G}(\Z)\subset \widetilde{G}$ the subgroup consisting of those matrices with entries in $\Z$. The matrices in \eqref{action:alpha} form a normal subgroup $N$ of $\widetilde{G}$, so we can take the quotient $G=\widetilde{G}/N$ and consider $G(\Z)=\widetilde{G}(\Z)/N(\Z).$ The following result gives a characterization of the Poincar\'e bundle.

\begin{thm} \begin{enumerate}
\item[]
\smallskip
\item The quotient
\begin{equation*}
\sE_{g}= G(\Z)\,\backslash\,\Bigl(\,\H_g\times \mathrm{Row}_g(\C)\times \mathrm{Col}_g(\C)\,\Bigr)
\end{equation*}
is isomorphic to the universal family of abelian varieties and their
duals over the fine moduli stack $\sA_g$. 
\smallskip
\item Under the previous isomorphism, the quotient
\begin{equation}
 \label{invP}
 \sP_{g}^{\times}=\widetilde{G}(\Z)\,\backslash\,\Bigl(\,\H_g\times
 \mathrm{Row}_g(\C)\times \mathrm{Col}_g(\C)\times \C\,\Bigr) 
\end{equation}
is a Poincar\'e bundle over $\sE_{g}$. Moreover, there is a canonical
isomorphism
\begin{displaymath}
  (\mathrm{Sp}_{2g}(\Z)\times N(\Z))\,\backslash\,\Bigl(\,\H_g\times
 \{(0,0)\}\times \C\,\Bigr)=\sA_{g}\times \C^{\times}. 
\end{displaymath}
that rigidifies $\sP^{\times}_{g}$.
\end{enumerate}
\end{thm}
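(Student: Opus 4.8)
The plan is to verify the three assertions in turn by making the group actions \eqref{action:Omega}--\eqref{action:alpha} explicit and matching them against the standard uniformizations of the universal abelian variety, its dual, and the Poincar\'e bundle. First I would treat part (1): fixing $\Omega \in \H_g$, the lattice $\Lambda_\Omega = \Z^g \Omega + \Z^g \subset \C^g$ (thought of as row vectors) defines the ppav $T_\Omega = \C^g/\Lambda_\Omega$, and the dual is uniformized by column vectors modulo $\Z^g + \Omega \Z^g$ via the pairing built from $\mathrm{Im}$. The point is that the action of the ``Jacobi'' part of $G$ given by \eqref{W} on $W$ (resp. \eqref{action:Z} on $Z$) is exactly translation by the period lattice of $T_\Omega$ (resp. $\widehat{T}_\Omega$), while the $\mathrm{Sp}_{2g}(\Z)$-part \eqref{action:Omega} implements the change-of-basis isomorphisms $T_{\gamma\Omega}\simeq T_\Omega$. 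So the quotient $\sE_g$ is fibered over $\sA_g = \mathrm{Sp}_{2g}(\Z)\backslash \H_g$ with fibre over $[\Omega]$ equal to $T_\Omega \times \widehat{T}_\Omega$; one checks this fibrewise description glues to the universal family, using that $\sA_g$ is a fine moduli stack so that the universal object is characterized by its fibres together with the descent data, which is precisely what the $G(\Z)$-quotient records.

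Next, for part (2), I would exhibit the line bundle structure: the projection $\widetilde X \to \H_g \times \mathrm{Row}_g(\C)\times \mathrm{Col}_g(\C)$ forgetting $\rho$ is $\widetilde G$-equivariant for the $G$-action downstairs, and the normal subgroup $N$ acts on the $\rho$-fibre $\C$ by translation \eqref{action:alpha}; hence $\widetilde G(\Z)\backslash \widetilde X \to G(\Z)\backslash(\H_g\times \mathrm{Row}\times\mathrm{Col})$ is a $\C$-bundle, and removing the zero section gives the $\C^\times$-bundle \eqref{invP}. To identify it with a Poincar\'e bundle I need to check the two defining properties fibrewise. Fixing $\Omega$, the cocycle describing $\sP_g^\times|_{T_\Omega\times\widehat T_\Omega}$ in the $(W,Z)$-variables is read off from the $\rho$-components of \eqref{W} and \eqref{action:Z}: translating $W$ by $\lambda_1\Omega+\lambda_2$ multiplies the fibre coordinate by $\exp(2\pi i\, \lambda_1 Z)$, and translating $Z$ by $\mu_1 - \Omega\mu_2$ multiplies it by $\exp(-2\pi i\, W\mu_2)$ (up to the conventions of $\exp(2\pi i \cdot)$ implicit in passing from $\Z$-valued to $\C^\times$-valued cocycles). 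Comparing with \eqref{cocP}, the restriction to $T_\Omega\times\{\phi\}$ for a fixed $\phi$ (i.e. fixed $Z$) is exactly $L_\phi$, giving property (ii); restricting to $\{0\}\times\widehat T_\Omega$ (i.e. $W=0$) kills the first cocycle and the second becomes trivial after the allowed coboundary, giving property (i). Since a Poincar\'e bundle is unique up to isomorphism once these hold, $\sP_g^\times$ is a Poincar\'e bundle over $\sE_g$. The rigidification is the content of the displayed identity: restricting \eqref{invP} to $W=Z=0$, the $\mathrm{Sp}_{2g}(\R)$-part of the action on $\rho$ in \eqref{action:Omega} degenerates (the term $-W\,{}^{\mathrm t}C\,{}^{\mathrm t}(C\Omega+D)^{-1}Z$ vanishes), so only $N(\Z)$ acts nontrivially on the $\C$-factor, and $N(\Z)\backslash \C = \C/\Z \cong \C^\times$ via $\exp(2\pi i\cdot)$; thus $s_0^\ast \sP_g^\times \simeq \sA_g\times\C^\times$, which is the required trivialization along the zero section.

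The main obstacle I anticipate is purely bookkeeping: getting the normalizations consistent across the whole argument, in particular whether the entries $\lambda_i,\mu_j$ of $\widetilde G(\Z)$ should be read as elements of $\Z$ generating the full period lattice or whether factors of $2\pi i$ intervene, and whether the antilinear-functional description of $\widehat T$ from the single-torus discussion matches the ``column vector'' description used for $\mathrm{Col}_g(\C)$. I would resolve this by pinning down one dictionary at the start — identifying $\phi\in\widehat V$ with its imaginary part $\eta = \mathrm{Im}(\phi)$ as on the previous page, and writing $\eta$ in coordinates as $v\mapsto \mathrm{Im}(v\,{}^{\mathrm t}\bar{?})$-type pairing with a column vector — and then verifying that the $\mathrm{Sp}_{2g}(\R)$-action \eqref{action:Omega} on $(W,Z)$ is precisely the one induced on $\C^g\times(\C^g)^\vee$ by the natural action on $H_1$ and its dual; granting the proposition relating monodromy to the pairing established in Section~\ref{riemannsurf} is not needed here, but the same symplectic-linear-algebra computations recur. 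The remaining verifications — that the forgetful map is a $\C$-bundle, that the two Poincar\'e axioms hold, and that the $W=Z=0$ locus rigidifies — are then short once the dictionary is fixed. A clean way to organize the write-up is to first do everything over a point $\Omega\in\H_g$ (single ppav), invoking the uniqueness already proved for a single torus, and only afterwards observe that all constructions are $\widetilde G$-equivariant, so they descend and glue over $\sA_g$.
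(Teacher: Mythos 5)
Your route is the same as the paper's: identify the fibre of $G(\Z)\backslash(\H_g\times \mathrm{Row}_g(\C)\times\mathrm{Col}_g(\C))$ over $[\Omega]$ with $T_\Omega\times\widehat T_\Omega$ using the dictionary between column vectors $Z=\mu_1-\Omega\mu_2$ and antilinear functionals via $\eta=\mathrm{Im}(\phi)$ (the paper does exactly this, citing Birkenhake--Lange for the globalization to the universal family), and then verify the two Poincar\'e conditions fibrewise from the $\rho$-components of the actions, with the rigidification coming from the locus $W=Z=0$. Two bookkeeping points in your part (2) need repair, and one of them is where the real computation lives. First, $\widetilde G(\Z)\backslash\widetilde X$ is not ``a $\C$-bundle with the zero section removed'': the subgroup $N(\Z)$ acts on the fibre coordinate $\rho$ by integer translations, so the quotient fibre is already $\C/\Z\simeq\C^{\times}$ via $\exp(2\pi i\rho)$ (you use precisely this later for the rigidification, so the earlier phrasing is just inconsistent). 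Second, your placement of the coboundary is backwards. At $W=0$ the $\rho$-component of \eqref{action:Z} is trivial on the nose, so property (i) needs no coboundary at all. By contrast, at fixed $Z=Z_0$ the cocycle read off from \eqref{W} is $a((\lambda_1,\lambda_2),W)=\exp(2\pi i\,\lambda_1 Z_0)$, and this is \emph{not} equal to the cocycle $\exp\bigl(2\pi i\,\mathrm{Im}(\phi_0(\lambda_1\Omega_0+\lambda_2))\bigr)$ prescribed by \eqref{cocP}: one has $\lambda_1 Z_0=\lambda_1\mu_1-\lambda_1\Omega_0\mu_2$, which differs from $\lambda_1\mu_1+\lambda_2\mu_2$. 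The identification of the restriction with $L_{\phi_0}$ therefore requires exhibiting an explicit coboundary; the paper uses $\psi(W)=\exp\bigl(2\pi i\,W\,\mathrm{Im}(\Omega_0)^{-1}\mathrm{Im}(Z_0)\bigr)$ and checks that the modified cocycle $b=a\,\psi(W+\lambda_1\Omega_0+\lambda_2)^{-1}\psi(W)$ is the one in \eqref{cocP}. Writing that the restriction ``is exactly $L_\phi$'' skips this step, which is the main content of the verification of property (ii); with that computation inserted (and the $\C^\times$-fibre description corrected), your proposal coincides with the paper's proof.
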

\begin{proof}
  This result is classical. See for instance \cite[\S 8.7]{AbV} for
  the construction of the universal family of abelian varieties.
  We start by sketching the construction of the isomorphism claimed in the first
  statement. Let $T$ be an element of $\sA_g$ which is the image of
  the element $\Omega \in \H_{g}$. Denote by $\omega_1, \dots,
  \omega_g$ the rows of $\Omega$, and let $e_1,\dots, e_g $ be the
  standard basis of $\C^g$, so that the lattice 
$\Lambda_{\Omega }$ is generated by $\omega_k, e_j$. Then the
corresponding abelian variety is $T=\C^g/\Lambda_{\Omega }$. 
  We
identify $\C^{g}$ with $\mathrm{Row}_g(\C)$ and $\Lambda $ with
$\mathrm{Row}(\mathbb Z^g)\oplus \mathrm{Row}(\mathbb Z^g)$ using the
above basis, so the inclusion $\Lambda \inj \C^{g}$ is given by
$(\lambda_1, \lambda_2)\mapsto \lambda_1 \Omega + \lambda_2$. When we
want to distinguish between an abstract vector $v\in \C^{g}$ and its
image in $\mathrm{Row}_g(\C)$ we will denote the latter by
$W_{v}$. 
   
By \eqref{W}, the action of $\widetilde G$ identifies $W\in
\mathrm{Row}_g(\C)$ with $W + \lambda_1 \Omega + \lambda_2$, for
$\lambda_1, \lambda_2 \in \mathrm{Row}(\mathbb Z^g)$. Thus, the image
of $W$ in the 
quotient 
$\widetilde G\backslash \widetilde X$ varies in $T= \C^g/\Lambda$.

The action of $\widetilde G$ identifies $Z \in \mathrm{Col}(\C^g)$
with  $Z+\mu_1-\Omega\mu_2$.
We verify as follows that the class of $Z$ in $\widetilde G \backslash
\widetilde X$ varies in the dual $\widehat T$ of $T$. If $\eta$ denotes, as before, the imaginary part of $\phi \in \widehat{\C^g}$, we have
$\phi(v) = \eta(-iv) + i \eta(v)$ for all $v$.
First, we identify $\widehat{\C^g} = \Hom_{\overline \C}(\C^g,
\C)$ with $\mathrm{Col}(\C^g)$, via the identification $\phi \in
\widehat{\C^g} \mapsto Z_\phi = \mu_1 - \Omega \mu_2,$ where $\mu_1 =
\mathrm{Col}\bigl(\eta(\omega_1), \dots, \eta(\omega_g)\bigr)$ and
$\mu_2 = \mathrm{Col}\bigl(\eta(e_1), \dots, \eta(e_g)\bigr)$. Under
this identification, the pairing between $\phi $ and $v$ is given by 
\begin{equation*}
\phi (v)=-\overline{W_{v}}\mathrm{Im}(\Omega)^{-1}Z_{\phi },
\end{equation*}
while its imaginary part is 
\begin{equation*}
\eta(v)=\mathrm{Im}(\phi (v))=\lambda _{1}\mu _{1}+\lambda _{2}\mu
_{2}.
\end{equation*}
Therefore, an element $\phi _{0}\in \widehat {\C^{g}}$ belongs to
$\widehat \Lambda$  if and only if $\eta_0 = \mathrm{Im}(\phi_0) \in
\Hom_{\Z}(\Lambda, \Z)$, which amounts to ask that the two associated vectors  
 $\mu_{1}$ and $\mu_2$ have integer coordinates. 
 We get $\widehat{\C^g} /\widehat \Lambda \simeq \C^g/\{\mu_1
 - \Omega \mu_2 \,|\, \mu_1, \mu_2 \in \Z^g\}$. In this way, we have verified
 that the fibre of $\sE_{g}\to \sA_{g}$ over a point $T\in \sA_{g}$ is
 identified with $T\times \widehat T$. This
 identification can be extended to an isomorphism of $\sE_{g}$ with
 the universal family of abelian varieties and their duals. 
 
 \smallskip
 
 Denote by $\sL$ the $\C^{\times}$ bundle obtained as the quotient
 \eqref{invP}. 
 If we restrict the actions \eqref{action:Omega}  and \eqref{action:Z}
 to the points
 of the form $(\Omega,0,Z,\rho)$ we see that the restriction of $\sL$
 to the set $W=0$ (which is the zero section in the abelian variety)
 is trivial. Thus we obtain the first condition that
 characterizes the Poincar\'e bundle. We now fix $\Omega_0\in \H_g$ and
 $Z_0\in\mathrm{Col}(\C^g)$. Denote $X_{0}$ the subvariety of
 $\widetilde X$ of equations $\Omega =\Omega _{0}$, $Z=Z_{0}$ and
 $\phi _{0}\in \widehat{\C^{g}}$ corresponding to $Z_{0}$. That is,
 \begin{equation*}
   \phi_{0} (v)=-\overline{W_{v}}\mathrm{Im}(\Omega_{0})^{-1}Z_{0}.
 \end{equation*}
 The
 restriction of the action 
 \eqref{W} to $X_{0}$ reads  
 \begin{displaymath}
 (\lambda_1,\lambda_2)(\Omega_0,W,Z_0,\rho)=
 (\Omega_0,W+\lambda_1\Omega_0+\lambda_2,\rho+\lambda_1Z_0).
 \end{displaymath}
 Hence the restriction of $\sL$ to the abelian variety covered by $X_{0}$
 is the $\C^{\times}$ bundle determined by the cocycle
 $$
 a((\lambda_1,\lambda_2),W)=\exp(2\pi i \lambda_1Z_0)
 $$
 Consider the holomorphic function $\psi\colon X_{0}\to \C$ given by
 $\psi (W)=\exp(2\pi i W\mathrm{Im}(\Omega _{0})^{-1}\mathrm{Im}(Z_0))$.
 The cocycle
 \begin{displaymath}
   b((\lambda_1,\lambda_2),W)=a((\lambda_1,\lambda_2),W)\psi
   (W+\lambda_1\Omega_0+\lambda_2)^{-1} \psi (W)
 \end{displaymath}
is equivalent to $a$ and hence defines an isomorphic bundle. Computing
this cocycle we obtain
\begin{displaymath}
  b((\lambda_1,\lambda_2),W)=\exp(2\pi i \mathrm{Im}(\phi_{0}(\lambda
  _{1}\Omega +\lambda _{2})).
\end{displaymath}
By \eqref{cocP} this cocycle determines the line bundle $L_{\phi
  _{0}}$, so $\sL$ satisfies also the second condition that
determines the Poincar\'e bundle. In consequence, we have seen that the
restriction of $\sP^{\times}_{g}$ to each fibre of $\sE_{g}\to
\sA_{g}$ is a Poincar\'e bundle. The stated rigidification shows in
particular that the restriction to the zero section is trivial
implying the statement.
\end{proof}

\begin{thm} \label{thm:2} Let $\sP^{\times}$ be a rigidified
  Poincar\'e bundle. Then
  there is a unique metric on $\sP^{\times}$ whose curvature is
  translation invariant and that, under the rigidification, satisfies
  $\|1\|=1$. Moreover, for the Poincar\'e bundle $\sP_{g}$ over the universal
  family $\sE_{p}$ this metric is given, for an element
  $(\Omega,W,Z,\rho)$ in $\widetilde{X}$, by  
\begin{equation}
  \label{eq:64} 
  \log||(\Omega,W,Z,\rho)|| = 
  \Big(-2\pi\mathrm{Im}(\rho)+
  2\pi\mathrm{Im}(W)
  \bigl(\mathrm{Im}(\Omega)\bigr)^{-1}\mathrm{Im}(Z)\Big). 
\end{equation}
\end{thm}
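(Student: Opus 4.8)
The plan is to prove the uniqueness statement first and then exhibit the metric explicitly, which simultaneously yields existence. (A metric on the $\C^\times$-bundle $\sP^\times$ means a Hermitian metric on the associated line bundle, and ``curvature'' refers to the curvature of its Chern connection.) For uniqueness, suppose $h_1$ and $h_2$ both have translation-invariant curvature and satisfy $\|1\|=1$ under the rigidification. Their quotient is a positive smooth function $f$ on the base of $\sP^\times$, and the difference of the two curvature forms equals $\partial\bar\partial\log f$ up to a fixed nonzero constant. By hypothesis this $(1,1)$-form restricts to a translation-invariant form on each fibre $T\times\widehat T$; since it is also $d$-exact and a translation-invariant form on a compact complex torus is harmonic while an exact harmonic form vanishes, it is zero on every fibre. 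Hence $\log f$ is pluriharmonic, and therefore (on a compact K\"ahler manifold) constant, on each fibre. As $f$ is then constant along the fibres of the family of abelian varieties, and the zero section, along which the rigidification forces $f=1$, meets every such fibre, we conclude $f\equiv 1$, i.e. $h_1=h_2$.

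For existence, one first reduces to the universal family: a general rigidified Poincar\'e bundle is, by the moduli property of $\sA_g$ and by the uniqueness just proved, canonically the pullback of $\sP_g$ along the classifying map, and pullback preserves both translation invariance of the curvature and the normalization at the zero section. It therefore suffices to check that \eqref{eq:64} defines a metric on $\sP_g^\times$ with the stated properties. Set $F(\Omega,W,Z,\rho)=\exp\bigl(-2\pi\Im(\rho)+2\pi\Im(W)(\Im\Omega)^{-1}\Im(Z)\bigr)$ on $\widetilde X$. First one checks that $F$ is $\widetilde G(\Z)$-invariant, so that it descends to a function on $\sP_g^\times=\widetilde G(\Z)\backslash\widetilde X$: invariance under the integral translations \eqref{W} and \eqref{action:Z} and under the integral shifts \eqref{action:alpha} is a short direct computation, using only that the parameters $\lambda_i,\mu_j$ are real, so that $\Im(\cdot)$ distributes and the resulting cross-terms cancel. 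The substantial point is invariance under $\mathrm{Sp}_{2g}(\Z)$ acting by \eqref{action:Omega}: here one uses the standard transformation rule $\Im\bigl((A\Omega+B)(C\Omega+D)^{-1}\bigr)=\prescript{\mathrm t}{}{(C\Omega+D)}^{-1}\,\Im(\Omega)\,\overline{(C\Omega+D)}^{-1}$ together with the fact that the twist $\rho\mapsto\rho-W\,\prescript{\mathrm t}{}{C}\,\prescript{\mathrm t}{}{(C\Omega+D)}^{-1}Z$ is exactly what absorbs the non-real terms produced when $\Im(W)$, $(\Im\Omega)^{-1}$ and $\Im(Z)$ are transported. Next, since $F(\Omega,W,Z,\rho+\alpha)=|e^{2\pi i\alpha}|\,F(\Omega,W,Z,\rho)$, the function $F$ is $\C^\times$-equivariant and so defines a genuine metric on the $\C^\times$-bundle $\sP_g^\times$. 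Evaluating on the zero section $W=Z=0$ at $\rho=0$ gives $\log F=0$, i.e. $\|1\|=1$ under the rigidification. Finally, trivializing $\sP_g^\times$ over a simply connected open set by the holomorphic section $\rho=0$, one has $\log\|\cdot\|=2\pi\Im(W)(\Im\Omega)^{-1}\Im(Z)$, so the curvature is, up to a constant, $\partial\bar\partial\bigl(2\pi\Im(W)(\Im\Omega)^{-1}\Im(Z)\bigr)$; restricted to a fibre $\Omega=\mathrm{const}$ this has constant coefficients in the flat coordinates $W,Z$, hence is translation-invariant (it is in fact the standard representative of $c_1$ of the Poincar\'e bundle). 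By uniqueness this coincides with the metric in the statement.

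The hard part will be the $\mathrm{Sp}_{2g}(\Z)$-equivariance in the second step: none of $\Im(W)$, $(\Im\Omega)^{-1}$, $\Im(Z)$ transforms under \eqref{action:Omega} by a real-linear rule, so one must see their imaginary parts recombine, once the $\rho$-twist is included, into an invariant expression. Equivalently, \eqref{eq:64} restricts on each fibre to the classical Appell--Humbert metric of the Poincar\'e bundle on $T\times\widehat T$, and this computation is precisely the assertion that these fibrewise canonical metrics patch, through the $\rho$-coordinate, into a single global metric on $\sP_g^\times$.
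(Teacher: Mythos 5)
Your proposal is correct and takes essentially the same route as the paper: exhibit the explicit formula \eqref{eq:64}, descend it to $\sP^{\times}_{g}$ by invariance under the group action, check the normalization along the zero section, and reduce uniqueness to the classical fact that translation-invariant curvature fixes the metric on each fibre up to a constant which the rigidification determines (you supply the standard $\partial\bar\partial$/maximum-principle proof of this fact, which the paper merely quotes as well known). The only differences are cosmetic: the paper deduces translation-invariance of the curvature from invariance of \eqref{eq:64} under the larger real group $\widetilde{G}_{\R}$ rather than by your direct constant-coefficient computation on the fibres, and the $\mathrm{Sp}_{2g}(\Z)$-invariance you flag as ``the hard part'' is exactly what the paper dismisses as a straightforward verification; it does go through, using $\Im(\Omega')=\prescript{\mathrm t}{}{(C\Omega+D)}^{-1}\,\Im(\Omega)\,\overline{(C\Omega+D)}^{-1}$ together with $(C\Omega+D)^{-1}C=\prescript{\mathrm t}{}{C}\,\prescript{\mathrm t}{}{(C\Omega+D)}^{-1}$.
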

\begin{proof}
It is well known that the invariance of the curvature form fixes the
metric up to a multiplicative constant on each fibre but the
compatibility with the rigidification fixes this constant.

Consider the space $\widetilde X'=\H_g\times \mathrm{Row}_g(\C)\times
\mathrm{Col}_g(\C)\times \C^{\times}$ and the map $\widetilde X\to
\widetilde X'$
that sends $\rho $ to $s=\exp(2\pi i \rho )$. Then $\widetilde X'$ is
a trivial $\C^{\times}$-bundle over $\H_g\times \mathrm{Row}_g(\C)\times
\mathrm{Col}_g(\C)$. The formula \eqref{eq:64} determines a hermitian
metric on this trivial bundle given by
\begin{equation}\label{metric}
  \|(\Omega,W,Z,s)\|^{2}=|s|^{2}\exp(4\pi\mathrm{Im}(W)
  \bigl(\mathrm{Im}(\Omega)\bigr)^{-1}\mathrm{Im}(Z)).
\end{equation}

Let $\widetilde G_{\R} \subset \widetilde G$ be the subgroup
consisting of matrices with $\alpha \in \R$. The fact that
\eqref{metric}  
induces a metric in the Poincar\'e bundle whose curvature is invariant
under translation follows from the invariance of the function
\eqref{eq:64} under the action of 
$\widetilde G_{\R}$, which is a straightforward verification.  
\end{proof}

\subsection{The Poincar\'e bundle and the archimedean
  height}
\label{sec:biext-arch-heights}

The interest for us on the Poincar\'e bundle is consequence of the
relation between biextensions and the Poincar\'e bundle due to Hain
\cite{Hain}.

\smallskip

Let $H$ be a principally polarized pure Hodge structure of weight $-1$ and type
$\{(-1,0),(0,-1)\}$ and $T_{H}$ the corresponding principally
polarized abelian variety. Let $\sB(H,\Z)$ be the set of isomorphism
classes of
biextensions of $H$. That is, the isomorphism classes of mixed Hodge
structures $E$  of weights $-2$, $-1$ and $0$ with
\begin{displaymath}
  \Gr^{W}_{-2}(E)=\Z(1),\quad \Gr^{W}_{-1}(E)=H,\quad \Gr^{W}_{0}(E)=\Z(0).
\end{displaymath}
There are natural maps
\begin{displaymath}
  \begin{matrix}
    \sB(H,\Z)&\longrightarrow & \Ext^{1}(\Z(0),H)=T_{H}\\
    E&\longmapsto& E/W_{-2}E.
  \end{matrix}
\end{displaymath}
and
\begin{displaymath}
  \begin{matrix}
    \sB(H,\Z)&\longrightarrow & \Ext^{1}(H,\Z(1))=\widehat T_{H}\\
    E&\longmapsto& W_{-1}E.
  \end{matrix}
\end{displaymath}
Thus we obtain a map $\sB(H,\Z)\to T_{H}\times \widehat T_{H}$. 
The canonical isomorphism $\Ext^{1}(\Z(0),\Z(1))=\C^{\times}$ induces a
structure of $\C^{\times}$-bundle on $\sB(H,\Z)$ and a rigidification
of $\sB(H,\Z)$.

\smallskip
 
Let $\sB(H,\R)$ be the set of isomorphism classes of real
biextensions. By Lemma \ref{lem:R} we can identify $\sB(H,\R)$ with
$\R$. We have already denoted by $\eta \colon \sB(H,\Z)\to \R$. 

\begin{thm}[Hain \cite{Hain}]\label{thm:1} The bundle $\sB(H,\Z)$ is
  a rigidified Poincar\'e bundle and the invariant metric is given by
  \begin{displaymath}
    \log \|E \|=\eta(E).
  \end{displaymath}
\end{thm}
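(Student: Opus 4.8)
\textbf{Proof plan for Theorem \ref{thm:1}.}

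The plan is to establish the two assertions of the theorem in sequence, reducing each to a concrete computation on the cover $\widetilde{X}$ together with the explicit description of biextensions in terms of period data. First I would make the link between an abstract biextension $E$ and a point of $\widetilde{X}$ completely explicit. Given $E\in\sB(H,\Z)$ with $T_{H}=\C^{g}/\Lambda_{\Omega}$, choose a period matrix $\Omega$ for $H$, an integral basis adapted to the weight filtration, and bases of the graded pieces compatible with the rigidification $\Ext^{1}(\Z(0),\Z(1))=\C^{\times}$. The extension data of $E$ are then encoded by: the class of $E/W_{-2}E$ in $\Ext^{1}(\Z(0),H)=T_{H}$, which under the uniformization $\widetilde{X}\to\sE_{g}$ is represented by a row vector $W\in\mathrm{Row}_{g}(\C)$; the class of $W_{-1}E$ in $\Ext^{1}(H,\Z(1))=\widehat{T}_{H}$, represented by a column vector $Z\in\mathrm{Col}_{g}(\C)$; and the ``total'' extension class, which is an element $\rho\in\C$ well-defined modulo the action of $N(\Z)$ together with the lattice translations in \eqref{W} and \eqref{action:Z}. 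This is precisely the data $(\Omega,W,Z,\rho)$, and the ambiguities in the choices are exactly the action of $\widetilde{G}(\Z)$, so one obtains the map $\sB(H,\Z)\to\sP^{\times}_{g}$. Functoriality of the $\Ext$ groups under the changes of basis shows that this map is a morphism of $\C^{\times}$-bundles and that it respects the rigidifications (both sides are rigidified by $\Ext^{1}(\Z(0),\Z(1))=\C^{\times}$). Since $\sP^{\times}_{g}$ was shown in the previous theorem to be a rigidified Poincar\'e bundle over $\sE_{g}$, this proves the first assertion.

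For the second assertion I would argue as follows. By Theorem \ref{thm:2} there is a \emph{unique} metric on a rigidified Poincar\'e bundle with translation-invariant curvature compatible with the rigidification, and on $\sP^{\times}_{g}$ it is given in coordinates by \eqref{eq:64}. So it suffices to check that the function $E\mapsto\eta(E)$ agrees with $\log\|\cdot\|$ under the identification of the first part, i.e. that for $E$ corresponding to $(\Omega,W,Z,\rho)$ one has
\begin{displaymath}
  \eta(E)=-2\pi\,\mathrm{Im}(\rho)+2\pi\,\mathrm{Im}(W)\bigl(\mathrm{Im}(\Omega)\bigr)^{-1}\mathrm{Im}(Z).
\end{displaymath}
Both sides are, by Lemma \ref{lem:R}, functions on $\sB(H,\R)\cong\R$; more precisely, fixing the two sub-extension classes $W$ and $Z$, each side is an affine function of $\rho\in\C/\R(1)$, and by \eqref{action:alpha} the coordinate $\rho$ is itself an affine coordinate on that $\C^{\times}$-torsor. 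Hence it is enough to match (i) the \emph{slope} in $\rho$ and (ii) the value at one base point. For (i): when $W=Z=0$ the biextension $E$ is a direct sum $H\oplus(\text{extension of }\Z(0)\text{ by }\Z(1))$, and $\eta$ of such a biextension is, by the definition of $\eta$ through Lemma \ref{lem:R} and the normalization $\Ext^{1}(\Z(0),\Z(1))=\C^{\times}=\C/\R(1)$, exactly $-2\pi\,\mathrm{Im}(\rho)$; this also pins down the base point, giving $0$ when $\rho=0$. For (ii), the mixed term: deforming $W$ and $Z$ away from $0$ changes $E$ by altering how the Hodge filtration of $E_{\C}$ sits relative to the integral structure, and the real splitting of Lemma \ref{lemm:1} (the subspace $M=F^{0}E_{\C}\cap\overline{F^{0}E_{\C}}$) moves accordingly; computing the $\R(1)$-component of the period of the resulting real biextension against $F^{0}$ produces exactly the bilinear expression $2\pi\,\mathrm{Im}(W)(\mathrm{Im}\,\Omega)^{-1}\mathrm{Im}(Z)$, which is the same correction term appearing in \eqref{action:Omega} and \eqref{metric}. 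The cleanest way to organize this is to observe that both $\eta$ and $\log\|\cdot\|$ are invariant under the real group $\widetilde{G}_{\R}$ (for $\log\|\cdot\|$ this was noted in the proof of Theorem \ref{thm:2}; for $\eta$ it follows from the functoriality in Lemma \ref{lem:R}), so it suffices to compare them on a single $\widetilde{G}_{\R}$-orbit, i.e. essentially at one point, which reduces everything to the rank-one computation just described.

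The main obstacle I expect is the bookkeeping in matching normalizations: the sign and the factor of $2\pi$ in \eqref{eq:64}, the negative orientation convention for the residue circles in \eqref{poin-res}, the identification $\Ext^{1}(\Z(0),\Z(1))=\C^{\times}$ versus $\C/\R(1)$, and the precise form of the pairing $\phi(v)=-\overline{W_{v}}\,\mathrm{Im}(\Omega)^{-1}Z_{\phi}$ used in the previous theorem all have to be threaded together so that the mixed term comes out with the correct sign. None of these steps is conceptually deep once the dictionary between $(\Omega,W,Z,\rho)$ and the extension data of $E$ is fixed, but getting a consistent sign convention throughout is where the real care is needed. I would handle this by committing at the outset to the conventions of \cite[\S3]{Hain}, in which exactly this computation is carried out, and then simply transcribing the metric formula into the coordinates of \eqref{invP}; since the uniqueness in Theorem \ref{thm:2} leaves no freedom, agreement on a single fibre together with $\widetilde{G}_{\R}$-invariance forces agreement everywhere.
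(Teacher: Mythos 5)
The paper offers no proof of Theorem \ref{thm:1}: the statement is quoted directly from Hain \cite{Hain} (\S 3 of that paper), and the text only records its consequences in Proposition \ref{height-pm}. So there is nothing in the paper to match your plan against line by line; judged on its own, your reconstruction is sound and in fact mirrors Hain's argument while making better use of what the paper has already set up. The first half --- encoding a biextension $E$ by period data $(\Omega,W,Z,\rho)$, checking that the ambiguities are exactly the action of $\widetilde G(\Z)$, and transporting the Poincar\'e-bundle property and rigidification from the explicit quotient \eqref{invP} --- is precisely the dictionary the paper itself uses later when writing the period map \eqref{eq:51}; the real content you would still need to supply is that this matching of extension classes is compatible with the holomorphic structure and the $\C^{\times}$-action, not merely a bijection on points. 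The second half is where your organization is genuinely efficient: by the uniqueness in Theorem \ref{thm:2} and the invariance of both $\log\|\cdot\|$ and $\eta$ under the real translations \eqref{W}, \eqref{action:Z}, \eqref{action:alpha}, the metric identity reduces to the slice $W=Z=0$, where $E$ splits off a Tate extension of class $\exp(2\pi i\rho)$ and $\eta(E)=-2\pi\,\mathrm{Im}(\rho)$ follows from Lemma \ref{lem:R}; since every real-translation orbit in a fibre meets that slice, the separate ``mixed term'' computation you sketch beforehand is redundant. The one point you should make explicit is the $\widetilde G_{\R}$-invariance of $\eta$: it holds because a real base change carries the integral biextension attached to $(\Omega,W,Z,\rho)$ to one whose real form is isomorphic, and $\eta$ factors through real biextensions by Lemma \ref{lem:R}. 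Hain's own treatment computes the metric directly from the canonical real splitting (as in Lemma \ref{lemm:1}); your route instead buys formula \eqref{eq:64} from Theorem \ref{thm:2}, at the cost of the sign and normalization bookkeeping you rightly flag as the delicate part.
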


Using now the relation between the height pairing and the
biextensions we can relate the height pairing and the Poincar\'e
bundle. Summing up Theorem \ref{thm:1} and Lemma \ref{lem:R} we deduce:

\begin{prop}\label{height-pm} Let $C$ be a smooth projective curve
  over $\C$ and $\oA$,
  $\oB$ integer-valued degree zero divisors on $C$ with disjoint
  support. The following three quantities coincide:
\begin{enumerate}
\item[(a)] $\langle\,\mathfrak A\,,\mathfrak B\,\rangle$,
\item[(b)] $\log ||H_{\mathfrak{B}, \mathfrak{A}}||$,
\item[(c)] $\eta(H_{\mathfrak{B}, \mathfrak{A}})$.
\end{enumerate}
\end{prop}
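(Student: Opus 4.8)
The plan is to deduce the statement by simply combining Lemma~\ref{lem:R} with Hain's Theorem~\ref{thm:1}, since essentially all the conceptual work has been done in the preceding subsections. First I would observe that the biextension $H_{\mathfrak{B},\mathfrak{A}}$ constructed in Section~\ref{sec:biext} is, by its very definition, a mixed Hodge structure with graded pieces $\Z(1)$, $H^1(C,\Z(1))$, $\Z(0)$, hence represents a point of $\sB(H,\Z)$ for $H=H^1(C,\Z(1))$. The Hodge structure $H$ is pure of weight $-1$ and type $\{(-1,0),(0,-1)\}$, and the cup product on $H^1(C)$ endows the associated complex torus $T_H$ with a principal polarization; therefore Theorems~\ref{thm:2} and~\ref{thm:1} apply to it.

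Then I would spell out the two equalities separately. The identity (b)$=$(c), that is $\log\|H_{\mathfrak{B},\mathfrak{A}}\|=\eta(H_{\mathfrak{B},\mathfrak{A}})$, is precisely the content of Theorem~\ref{thm:1}: the Poincar\'e bundle $\sB(H,\Z)$ is rigidified, and for the unique invariant metric compatible with the rigidification (Theorem~\ref{thm:2}) the norm of a biextension class equals its image under the real-coefficient comparison $\eta\colon \sB(H,\Z)\to\sB(H,\R)\simeq\R$. The identity (a)$=$(c), namely $\langle\mathfrak A,\mathfrak B\rangle=\eta(H_{\mathfrak{B},\mathfrak{A}})$, is exactly the second assertion of Lemma~\ref{lem:R}, which itself rests on \cite[Cor.~3.2.9, Prop.~3.3.7]{Hain} together with the fact, established there, that Hain's biextension-class definition of the height agrees with the integral \eqref{height-def} used here. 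Chaining the two equalities yields (a)$=$(b)$=$(c).

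The only genuine care required is bookkeeping: one must verify that the map $\eta$ appearing in Theorem~\ref{thm:1} is literally the same $\eta\colon\sB(H,\Z)\to\R$ as in Lemma~\ref{lem:R} (it is, by the definition recalled just before Theorem~\ref{thm:1}), and that the norm $\|H_{\mathfrak{B},\mathfrak{A}}\|$ in part~(b) refers to the rigidified invariant metric of Theorem~\ref{thm:2}; since any two invariant metrics compatible with a fixed rigidification coincide, this is harmless. A secondary, slightly more tedious point is the compatibility of duality conventions, because $H_{\mathfrak{B},\mathfrak{A}}$ may equivalently be presented as $H_1(C\setminus\mathfrak B,\mathfrak A;\Z)$, so one should check that the twist identifying $H^1(C,\Z(1))$ with the dual of $H_1(C,\Z)$ and the orientation of the residue circles in \eqref{poin-res} match those under which Hain's formula is stated. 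I expect this sign-and-twist check to be the main obstacle; every other step is purely formal.
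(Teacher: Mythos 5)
Your proposal matches the paper's argument exactly: the paper derives Proposition~\ref{height-pm} precisely by combining Lemma~\ref{lem:R} (giving (a)$=$(c)) with Hain's Theorem~\ref{thm:1} (giving (b)$=$(c)), just as you do. Your additional remarks on checking that $\eta$ and the rigidified invariant metric are the same objects in both statements are sensible bookkeeping but do not change the route.
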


\begin{ex} Let $C=\P^1$ and consider the divisors $\mathfrak{A}=z_1-z_2$ and $\mathfrak{B}=z_3-z_4$, where $z_i$ are four distinct points of $\P^1$. Then: 
$$
\langle \mathfrak{A}, \mathfrak{B} \rangle=\mathrm{Re} \int_{z_4}^{z_3} \left(\frac{1}{z-z_1}-\frac{1}{z-z_2}\right)dz=\log \big|\frac{(z_3-z_1)(z_4-z_2)}{(z_3-z_2)(z_4-z_1)}\big|. 
$$ Observe that the term inside the absolute value is nothing else than the cross-ratio of the points $z_i$. Since $H^1(\P^1, \Z(1))=0$, the biextension associated to $\mathfrak{A}$ and $\mathfrak{B}$ is in this case simply the extension 
$$
0 \to \Z(1) \to H^1(\P^1 \setminus \{ z_1, z_2 \}, \{ z_3, z_4 \}; \Z(1)) \to \Z(0) \to 0. 
$$
\end{ex}

\section{The asymptotic of the height pairing}\label{sec:nilp}

The goal of this section is to prove Theorem \ref{thm:6} from the introduction, which relates the asymptotic of the height pairing between degree zero divisors with disjoint support, as a family of smooth curves degenerates to a stable curve $C_0$, to the ratio of the first and the second Symanzik polynomials of the dual graph of $C_0$.

\subsection{Asymptotic of the height pairing}

Let $\Delta$ be a small open disc around $0 \in \C$, and write
$\Delta^*=\Delta \setminus \{0\}$ and $S=\Delta^{3g-3}$. Consider the
versal analytic deformation $\pi \colon \sC \to S$ from Section
\ref{riemannsurf} of the stable curve $C_0$. The stability of $C_0$
implies in particular that we deal with curves of genus $g\ge 2$ and
that the dual graph $C_0$ has $\le 3g-3$ edges. These assumptions can
be removed if one works systematically with moduli of stable marked
curves.

Recall that the fibres are smooth outside a normal crossing divisor
$D=\bigcup_{e \in E} D_e \subset S$, with irreducible components
indexed by the set of singular points of $C_0$. We denote by $U$ the
complement of $D$ in $S$ 
and we identify it with $U=(\Delta^*)^{E}\times
\Delta^{3g-3-|E|}$. The universal cover is then 
\begin{equation}\label{eq:universal-cover}
\widetilde{U} = \mathbb H^E \times \Delta^{3g-3-|E|} \longrightarrow U, 
\end{equation} where the map is induced by $z_e \mapsto \exp(2\pi i
z_e)$ in the first factors and identity on the second factors.

\smallskip

We assume moreover that  we are given two collections
$$
\sigma_1=\{ \sigma_{\rl, 1}\}_{\rl=1, \ldots, n}, \quad \sigma_2=\{\sigma_{\rl, 2}\}_{\rl=1, \ldots, n}
$$
of sections $\sigma_{\rl, i} \colon S \to \mathcal{C}$ of $\pi$. Since $\sC$ is regular over $\C$, the sections cannot pass through double points of $C_0$. Thus, for each $\rl$, 
$\sigma_{\rl,i}(S)\cap C_0$ lies in a unique irreducible component $X_{v_\rl}$ of $C_0$, corresponding to a vertex $v_\rl$ of $G$. We assume further that the sections $\sigma_{\rl,1}$ and $\sigma_{\rl,2}$ are distinct on $C_0$. It
follows, possibly after shrinking $S$, that ${\sigma_1}$ and $\sigma_2$ are disjoint as well.

\smallskip

 Let $\underline \p_1 = \{\p_{\rl,1}\}_{\rl=1}^n \in (\R^D)^{n,0}$ and 
 $\underline \p_2 = \{\p_{\rl,2}\}_{\rl=1}^{n}\in (\R^D)^{n,0}$ be two collections of external momenta satisfying 
 the conservation law. 
 We label the marked points $\sigma_{\rl,i}$ with $\p_{\rl,i} \in \R^{D}$, and we write $\underline \p_1^G=(\p_{v, 1}^G)$ and $\p_2^G=(\p_{v, 2}^G)$ for the \textit{restriction} of $\underline \p_1$ and $\underline \p_2$ to $G$. By definition, for each vertex 
$v$ of $G$, the vector  $\p_{v,i}^G$ is the sum of all $\p_{\rl,i}$ with $v_\rl = v$. 
\smallskip

For any $s \in S$, let $\mathfrak A_s $ and $\mathfrak B_{s}$ denote the $\R^{D}$-valued degree zero
divisors on $C_s$ 
$$
\mathfrak A_s = \sum_{\rl=1}^n \ps_{\rl,1}\sigma_{\rl,1}(s), \qquad \mathfrak B_s = \sum_{\rl=1}^n \p_{\rl,2}\sigma_{\rl,2}(s).
$$ Recall that in Section
 \ref{sec:archimedean-heights} we have extended the usual archimedean height pairing to $\R^{D}$-valued degree zero divisors by means of the given Minkowski bilinear form.  
We thus get a function 
\begin{equation*}
U \longrightarrow \R, \quad s \longmapsto \langle \mathfrak{A}_s, \mathfrak{B}_s \rangle. 
\end{equation*}

\begin{defn} An \textit{admissible segment in $S$} is a continuous map
  $$
  \underline t: I=(0,\varepsilon) \rightarrow U=(\Delta^*)^{E}\times
  \Delta^{3g-3-|E|}
  $$ from an open interval of positive length $\varepsilon$, which
  satisfies the following:
    \begin{enumerate}
   \item[(i)] letting $(t_e)_{e \in E}$ denote the coordinates in the factor $(\Delta^\ast)^E$, the limit $\lim_{\alpha'\rightarrow 0}
     |t_e(\alpha')|^{\alpha'}$ exists and belongs to $(0, 1)$ for all $e \in E$; 
     
     \vspace{1mm}
    
    \item[(ii)] the segment can be extended to a continuous map $\underline t : [0,\varepsilon)
      \rightarrow S$. 
  \end{enumerate}
\end{defn}
Note that it follows from property (i) that $t_e(0) =0$ for all
$e\in E$. 

\begin{ex} \
\begin{enumerate}
\item Given tuples of real numbers $(x_e)_{e \in E}$ and $(Y_e)_{e \in E}$ with $Y_e>0$ for all $e \in E$, we define a map $\underline{z} \colon (0, 1) \to \mathbb{H}^E \times \{0\} \subset \widetilde U$ by $z_e(\alpha')=x_e+i \frac{Y_e}{2\pi \alpha'}$. Projecting to $U$ by the universal cover \eqref{eq:universal-cover}, we get an admissible segment $\underline{t} \colon (0, 1) \to U$ for which $|t_e(\alpha')|^{\alpha'}=\exp(-Y_e)$. \newline\noindent

\item Let $\Delta_\delta^*$ be a punctured disc of radius $\delta$ centered at the origin, and $\gamma: \Delta_\delta^* \to U$ be an analytic map which can be analytically extended to $\widetilde{\gamma}: \Delta_\delta \to S$. Let $\epsilon = - \frac{1}{\log\delta}$ and let $\pi: \Delta_\delta^* \to (0,\epsilon)$ be the map $\pi(t) = - \frac{1}{\log |t|}$. Then for any continuous section $\eta$ of $\pi$, the composition $\gamma\circ\eta: (0,\epsilon) \to U$ is an admissible segment. 
\end{enumerate}
\end{ex}

\smallskip

Our first result describes the asymptotic behavior of the height pairing $\langle \mathfrak A_s,
  \mathfrak B_s \rangle$ as the smooth curves $C_s$ degenerate to $C_0$ through an admissible segment. 

\begin{thm}\label{thm:main} For any admissible segment $\underline t \colon I \rightarrow U$ the following asymptotic estimate holds
\begin{equation}\label{asshp}
\lim_{\alpha'\to 0} \alpha'\langle \mathfrak A_{\underline t(\alpha
    ')},
  \mathfrak B_{\underline t(\alpha ')}\rangle =\frac{\phi_G(\underline \p_1^G,
  \underline \p_2^G,\underline Y)}{\psi_G(\underline Y)
}, 
\end{equation}
where, for each edge $e\in E$, we define
$$
Y_e = - \lim_{\alpha'\to 0}
\log|t_e(\alpha')|^{\alpha'}>0,
$$ and $\psi_G$ and $\phi_G$ denote the first
and second Symanzik polynomials of $G$.
\end{thm}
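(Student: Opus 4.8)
The plan is to reduce the computation of the height pairing to an explicit evaluation of the canonical metric on the Poincaré bundle, and then to extract its leading-order asymptotics via the period map and the nilpotent orbit theorem. By Proposition \ref{height-pm}, for integer-valued divisors $\oA_s$, $\oB_s$ with disjoint support we have $\langle \oA_s, \oB_s \rangle = \log\|H_{\oB_s,\oA_s}\|$, where $H_{\oB_s,\oA_s}$ is the biextension mixed Hodge structure, viewed as a point of the Poincaré bundle $\sP^\times_g$ over $\sE_g$, and the norm is the invariant metric of Theorem \ref{thm:2}. Since both sides of \eqref{asshp} are bilinear in the momenta $(\underline\p_1,\underline\p_2)$ — the right side by Proposition \ref{prop:second} and the left by construction of the archimedean pairing — it suffices to treat the integer-valued case and then extend by bilinearity coupled with the Minkowski form. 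So I would first record this reduction, then concentrate on a single biextension.

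The heart of the matter is to write down the period map $\widetilde\Phi\colon \widetilde U \to \H_g \times \mathrm{Row}_g(\C)\times \mathrm{Col}_g(\C)\times \C$ attached to the variation $s\mapsto H_{\oB_s,\oA_s}$ over $U$, and then to plug its components $(\Omega(z),W(z),Z(z),\rho(z))$ into formula \eqref{eq:64}. The key structural input is the local monodromy analysis of Section \ref{riemannsurf}: around $D_e$ the monodromy logarithm $N_e$ on $H_1(C_{s_0},\Z)$ is the Picard–Lefschetz operator $\beta\mapsto \langle\beta,a_e\rangle a_e$, and by Proposition \ref{rmk25}, in the basis adapted to the vanishing cycles, $N_e$ is represented by the rank-one matrix $M_e = (c_{e,i}c_{e,j})$. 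Consequently the $\Omega$-component of the period map has the nilpotent-orbit form $\Omega(z) = \sum_{e\in E} z_e M_e + (\text{holomorphic, bounded})$ on $\widetilde U$; in particular $\mathrm{Im}(\Omega(z)) = \sum_e \mathrm{Im}(z_e)M_e + O(1) = \frac{1}{2\pi}\sum_e(-\log|s_e|)M_e + O(1)$, i.e. $\mathrm{Im}(\Omega) = \sum_e y_e M_e + O(1)$ in the notation $y_e = -\tfrac{1}{2\pi}\log|s_e|$. Along an admissible segment with $\alpha' y_e \to \tfrac{1}{2\pi}Y_e$, the dominant term of $\mathrm{Im}(\Omega)$ scales like $1/\alpha'$, so the $O(1)$ correction is negligible after multiplying by $\alpha'$. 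The same nilpotent-orbit description applies to the $W$- and $Z$-components (the images of $\oA_s$ and $\oB_s$ in the abelian variety and its dual): each is, up to bounded terms, a $\Z$-linear combination of the vanishing-cycle data, so that $\mathrm{Im}(W) = \sum_e y_e W_e(\text{of }\oA) + O(1)$ and $\mathrm{Im}(Z) = \sum_e y_e W_e(\text{of }\oB) + O(1)$, with the column vectors $W_e(\cdot)$ exactly the ones appearing in Definition \ref{def:1}. The $\rho$-component carries the extension-of-extensions data; by admissibility of the variation it is again affine-linear in $z$ modulo bounded terms, with linear part a quadratic expression $\sum_e z_e Q_e$ in the vanishing-cycle pairings, giving $\mathrm{Im}(\rho) = \sum_e y_e Q_e(\omega) + O(1)$.

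Substituting all of this into \eqref{eq:64} gives, after multiplying by $\alpha'$ and passing to the limit along the admissible segment,
\begin{equation*}
\lim_{\alpha'\to 0}\alpha'\langle\oA_{\underline t(\alpha')},\oB_{\underline t(\alpha')}\rangle
= -2\pi\Bigl(\sum_e \tfrac{Y_e}{2\pi}Q_e(\omega)\Bigr)
+ 2\pi\Bigl(\sum_e \tfrac{Y_e}{2\pi}W_e\Bigr)\Bigl(\sum_e \tfrac{Y_e}{2\pi}M_e\Bigr)^{-1}\Bigl(\sum_e \tfrac{Y_e}{2\pi}W_e\Bigr),
\end{equation*}
where I have used that the quadratic-in-$1/\alpha'$ terms in $W$ and $Z$ together with the $M^{-1}$ of order $\alpha'$ produce a finite limit, while cross terms involving a bounded factor and only one factor of $1/\alpha'$ vanish in the limit. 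Now one recognizes the right-hand side: by the matrix identity $\det T/\det M = -{}^{\mathrm t}WM^{-1}W + S$ used to derive \eqref{eq:21}, with $M = \sum_e Y_e M_e$, $W = \sum_e Y_e W_e$ and $S = \sum_e Y_e Q_e(\omega)$, the above expression equals exactly $-\bigl(-{}^{\mathrm t}WM^{-1}W + S\bigr)$ evaluated with the right signs — more precisely, after carefully matching the sign conventions in \eqref{eq:21} and the fact that $q(F) = -\langle\p_{F_1},\p_{F_2}\rangle$, it equals $\phi_G(\underline\p_1^G,\underline\p_2^G,\underline Y)/\psi_G(\underline Y)$ via Propositions \ref{prop:second} and \ref{rmk25}. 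This is the claimed formula \eqref{asshp}.

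The main obstacle — and where the nilpotent orbit theorem genuinely enters rather than just the monodromy computation — is justifying that the ``$+O(1)$'' corrections in all four components of the period map are honestly bounded on $U$ (after shrinking), and, more delicately, that the rank-degenerate asymptotics of $\mathrm{Im}(\Omega)$ (note $M_e$ has rank one, so $\sum_e y_e M_e$ is far from the full-rank regime) still allow one to invert $\mathrm{Im}(\Omega)$ and control ${}^{\mathrm t}(\mathrm{Im}\,W)(\mathrm{Im}\,\Omega)^{-1}(\mathrm{Im}\,Z)$ uniformly. This requires the (weak form of the) nilpotent orbit theorem for admissible variations of mixed Hodge structures — specifically the comparison of the period map with its nilpotent orbit up to bounded error in the Hodge norm — applied to the biextension variation over $U$, together with the observation that $\mathrm{Im}(W)$ and $\mathrm{Im}(Z)$ lie in the span of the $W_e$, which is exactly the kernel-compatible direction for $\sum_e y_e M_e$, so that the relevant quadratic form degenerates onto the quotient $H_1(G)$ where $\sum_e y_e M_e$ is the honest Symanzik form $\sum_e y_e \langle\cdot,\cdot\rangle_e$. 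The rest is the routine algebra of Section \ref{sec:sym} applied with $\underline Y$ in place of $\underline\ovar$, plus the homogeneity of $\phi_G/\psi_G$ to see that only the ratios $Y_e$ matter.
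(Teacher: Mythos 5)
Your overall strategy is exactly the paper's: reduce by bilinearity to integer-valued divisors, identify $\langle \oA_s,\oB_s\rangle$ with $\log\|H_{\oB_s,\oA_s}\|$ via Proposition \ref{height-pm}, compute the monodromy logarithms $N_e$ from Picard--Lefschetz, use the nilpotent orbit theorem to write the period map as $\exp(\sum_e z_eN_e)$ applied to a map that stays bounded on $S$, substitute into the metric formula \eqref{eq:64}, and match the outcome with $\phi_G/\psi_G$ through \eqref{eq:21} and homogeneity. The gap is in the step you yourself single out as the main obstacle: you never actually establish the asymptotics of the quadratic term $\Im(W)\,(\Im\Omega)^{-1}\,\Im(Z)$, and the justification you sketch is incorrect as stated. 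It is not true that $\Im(W)$ and $\Im(Z)$ lie in the span of the vanishing-cycle directions: the nilpotent orbit description gives $\Im(W)=\Im(W_0)+\sum_e y_e\,\underline{\ps}_2\widetilde W_e$ and similarly for $Z$ and $\Omega$, where the ``constant'' pieces $\Im(W_0),\Im(Z_0),\Im(\Omega_0)$ are merely bounded (with $\Im(\Omega_0)$ positive definite) and need not be compatible with the graph directions; only the growing parts are supported on the $h$ coordinates where $\sum_e y_eM_e$ lives. Likewise, dismissing the cross terms because they ``involve only one factor of $1/\alpha'$'' is not automatic: $(\Im\Omega)^{-1}$ is of size $O(\alpha')$ only on the graph block and of size $O(1)$ on the complementary block, and the blocks of the inverse mix, so a term like $\bigl(\sum_e y_e\underline{\ps}_2\widetilde W_e\bigr)(\Im\Omega)^{-1}\Im(Z_0)$ is not obviously $o(1/\alpha')$ without an argument (e.g.\ Cauchy--Schwarz for the positive form $(\Im\Omega)^{-1}$ combined with $(\Im\Omega)^{-1}\le(\Im\Omega_0)^{-1}$, or a Schur-complement analysis on the graph block).

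This is precisely the content of Theorem \ref{thm:4}, which in the paper is not a formal consequence of the nilpotent orbit theorem: after a polarization trick reducing to $W_0={}^{\mathrm t}Z_0$ and the factorization $\widetilde Z_e\,{}^{\mathrm t}\underline{\ps}_1=\widetilde M_e v$ (so the growing part of $Z$ lies in the image of the growing part of $\Im\Omega$), the relevant expression is recognized as a normlike function and the bounded-difference estimate is imported from \cite[Theorem 3.2~(1)]{BdJH}. That stronger statement (Corollary \ref{cor:1}: the error is \emph{bounded}, not just $o(1/\alpha')$) is moreover what gets reused in the proof of Theorem \ref{thm:8}. To complete your argument you must either supply the degenerate-inversion estimate yourself (for the limit statement alone, a Schur complement on the graph block together with Cauchy--Schwarz does suffice, since $\sum_e Y_eM_e$ is invertible there) or quote the normlike-function result as the paper does; as written, the decisive inequality is asserted rather than proved. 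Finally, your displayed limit has an overall sign slip (compare Proposition \ref{prop:3}: $\underline{\ps}_{2}\Gamma_{e}\,{}^{\mathrm t}\underline{\ps}_{1}=-Q_{e}(\omega_1,\omega_2)$ and $Z_e\,{}^{\mathrm t}\underline{\ps}_1=-W_e(\omega_1)$), which you flag but should be carried out explicitly.
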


\smallskip

 Before proving the theorem, we need to consider the period map
 obtained from the variation of the biextension mixed Hodge structures given by the divisors $\mathfrak A_s$ and $\mathfrak B_s$. This is what we do next.

\subsection{The period map and its monodromy}

Throughout this section we assume that the divisors $\mathfrak A_s$ and $\mathfrak
B_s$ are integer-valued, that is, $\ps_{\rl,i}\in \Z$ for all $\rl,i$. We see the $\underline \ps_{i}$ as row vectors. 

\smallskip

The family of mixed Hodge structures $H_{\oB_{s},\oA_{s}}$ fit together into an admissible variation of mixed Hodge structures (see
\cite{SZ:vmh} for the definition). This can be seen as follows. Using
the theory of mixed Hodge modules \cite{Saito1}, \cite{Saito2} one can
form a mixed Hodge module $H_{\oB_{s},\oA_{s}}$. Since the relative
homology $H_{1}(C_{s}\setminus \oA_{s},\oB_{s};\Z)$ is a local system,
then $H_{\oB_{s},\oA_{s}}$ is an admissible variation of mixed Hodge
structures. See \cite{Asakura} for a survey of mixed Hodge modules
with all the needed properties.

\smallskip

In what follows we give a description of the period map of the
variation of $H_{\oB_{s},\oA_{s}}$ and its monodromy. Since the period
map is only well 
defined up to the action of the group $\widetilde{G}(\Z)$ we have to
make some choices. We start by stating explicitely all the choices.
We fix basepoints $s_0 \in 
U$ and $\tilde
s_0 \in \widetilde U$ lying above $s_0$, and a symplectic  
basis 
\begin{equation*}
a_1,\dotsc,a_{g},b_1,\dotsc,b_{g} \in H_1(C_{s_0},\Z) =
A_{0}\oplus B_{0}.
\end{equation*} 
such that the space of vanishing cycles $A$ is generated by
$a_1,\dots, a_h\in A$, and $b_1, \dots, b_h$ generate $H_1(C_{s_0},
\Z)/A' \simeq H_1(G, \Z)$ as in \eqref{eq:aux}.

\smallskip

For $i=1,2$, we write $\Sigma _{i,s}=\{\sigma _{1,i}(s),\dots ,\sigma
_{n,i}(s)\}$, $\Sigma_{s} =\Sigma _{1, s}\cup \Sigma _{2, s}$ and
$\Sigma _{i}=\bigcup_{s}\Sigma _{i,s}$. We 
lift the classes $a_{j}$ and $b_{j}$, $j=1,\dots ,g$ to elements of
$H_{1}(C_{s_{0}}\setminus \Sigma _{s_{0}},\Z)$ by choosing loops that
do not meet the points in $\Sigma _{s_{0}}$. We will denote these new classes also by $a_{j}$ and $b_{j}$.

\smallskip

Since the cohomology groups 
$H_{1}(C_{s}\setminus \Sigma _{s},\Z)$ form a local system, we can
spread out this symplectic basis to a basis 
\begin{equation*}
a_{1,\tilde s}, \dots, a_{g,\tilde s}, b_{1,\tilde s}, \dots,
b_{g,\tilde s} 
\end{equation*}
of $H_{1}(C_{s}\setminus \Sigma _{s},\Z)$, for any $s\in U$ and $\tilde s\in
\widetilde U$ over it. 
If there is no risk of confusion, we drop $\tilde s$, and simply use
$a_i$ and $b_i$ for these elements. Note also that, 
since $A_{0}$ is isotropic and contains the subspace of vanishing
cycles, the Picard-Lefschetz formula \eqref{eq:42} implies that
the elements $a_{i,\tilde s}$ only depend on $s$ and not on $\tilde
s$. Thus we will also denote them by $a_{i,s}$.

\smallskip

By the admissibility of $H_{\oB,\oA}$ we know that
$H_{\oB,\oA}\otimes_{\C}\sO_{U}$ can be extended to a
holomorphic vector bundle over $S$ and that  
$F^{0}W_{-1}H_{\oB,\oA}$ can be extended to a coherent subsheaf of it.
From this we deduce the existence of a collection of $1$-forms
$\{\omega_{i}\}_{i=1, \ldots, 
  g}$ on $\pi ^{-1}(U)\subset \sC$ such that, for each $s\in
U$, the forms $\{\omega_{i, s}\coloneqq
\omega_{i}|_{C_{s}}\}_{i=1, \ldots, g}$ 
are a  basis  of the
holomorphic differentials on $C_s$ and
\begin{equation} \label{eq:7}
 \int_{a_{i, s}}
\omega_{j, s}=\delta_{i, j}. 
\end{equation}
Then the classical period matrix for the
family of curves $\sC$ is
$(\,\int_{b_{i, s}}\omega_{j,s}\,)$. 

\smallskip

We choose an integer valued 1-chain
$\gamma_{\oB_{s_0}}$ on $C_{s_0} \setminus \Sigma _{1,s_{0}}$
having $\oB_{s_{0}}$ as boundary. By adding a linear combination of
the $b_{j}$ if needed, we can assume that
\begin{equation}
  \label{eq:5}
  \langle a_{i},\gamma
_{\oB_{s_{0}}}\rangle=0.
\end{equation}
The chain $\gamma_{\oB_{s_0}}$ determines a class 
$$
[\gamma_{\oB_{s_0}}]\in H_{1}(C_{s_0} \setminus
\Sigma _{1,s_{0}},\Sigma _{2,s_{0}},\Z)
$$ 
that we can spread to classes $\gamma_{\oB_{\tilde s}}$ as before. 

\smallskip

Invoking again the admissibility of $H_{\oB,\oA}$, we can find 
a 1-form $\omega_{\oA}$ on $\pi
^{-1}(U)\setminus \Sigma _{1}$ such that each restriction
$\omega _{\oA,s}\coloneqq \omega_{\oA}|_{C_{s}} $ is a holomorphic
form of the third kind, with residue $\oA_s$ and normalized in such a
way that
\begin{equation}\label{eq:8}
  \int _{a_{i},s}\omega _{\oA,s}=0,\quad i=1,\dots,g.
\end{equation}
Note that this last condition is easily achieved by adding to
$\omega_{\oA}$ a suitable linear combination of the $\omega_{i}$.   

\begin{prop} The \textit{period map} of the variation of mixed Hodge
  structures $H_{\mathfrak{B}_s, \mathfrak{A}_s}$ is given by
\begin{align}
\widetilde\Phi \colon \widetilde U &\longrightarrow \mathbb
H_{g}\times \mathrm{Row}_{g}(\C) \times \mathrm{Col}_{g}(\C) \times \C \nonumber \\
\tilde{s} &\longmapsto \Bigl(
\,\bigl(\,\int_{b_{i, \tilde s}}\omega_{j,s}\,\bigr)_{i,j}\,,
\, \bigl(\,\int_{\gamma_{\mathfrak B, \tilde s}}\omega_{j,s}\,\bigr)_j\,,
\,\bigl(\,\int_{b_{i,\tilde s}}\omega_{\mathfrak A,s}\,\bigr)_i\,,
\,\int_{\gamma_{\mathfrak B, \tilde s}}\omega_{\mathfrak A,s}\,\Bigr)
\label{eq:51}.
\end{align}
\end{prop}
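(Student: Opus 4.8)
The plan is to compute the period map of the variation $H_{\oB_s,\oA_s}$ by fixing a flat $\Z$-frame over $\widetilde U$ adapted to the weight filtration and then reading off the position of $F^{0}$ with respect to it. Recall from Section~\ref{sec:arch-heights-poinc} that, once a frame adapted to $W_{\bullet}$ and to the graded data $\Z(1)$, $H^1(C_s,\Z(1))$, $\Z(0)$ is chosen, the classifying space of biextension mixed Hodge structures is precisely the homogeneous space $\widetilde X=\H_{g}\times\mathrm{Row}_{g}(\C)\times\mathrm{Col}_{g}(\C)\times\C$ (this is exactly what the Poincar\'e-bundle description $\sP_{g}^{\times}=\widetilde G(\Z)\backslash\widetilde X$ together with Hain's theorem encode). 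Hence the period map is a holomorphic map $\widetilde U\to\widetilde X$, well defined once the choices of symplectic basis, of $\gamma_{\oB}$, and of the normalizations~\eqref{eq:5},~\eqref{eq:7},~\eqref{eq:8} have been made, and the content of the proposition is the identification of its four blocks with the stated matrices of integrals.

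I would proceed in three steps. \emph{Step 1 (the flat frame).} Over $\widetilde U$, the $\Z$-structure of $H_{1}(C_s\setminus\oA_s,\oB_s;\Z)$ — against which the forms in $F^{0}$ are integrated — has a frame whose weight-$(-2)$ part $\Z(1)$ is generated by a small positively oriented loop $\delta$ (a $2\pi i$-generator), whose weight-$(-1)$ graded piece $H_{1}(C_s,\Z)$ carries the spread-out symplectic basis $a_{1,s},\dots,a_{g,s},b_{1,\tilde s},\dots,b_{g,\tilde s}$, and whose weight-$0$ graded piece $\Z(0)$ is generated by the class of the spread-out relative chain $\gamma_{\oB_{\tilde s}}$, which has boundary $\oB_s$. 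Choosing the loops representing the $b_i$ and the chain $\gamma_{\oB}$ to avoid $\Sigma_s$ and using~\eqref{eq:5} makes these lifts unambiguous and compatible with the inclusions entering the construction of the biextension in Section~\ref{sec:arch-heights-poinc}. \emph{Step 2 (the Hodge filtration).} From the residue sequence~\eqref{poin-res} and the diagram~\eqref{eq:70} one sees that $F^{0}W_{-1}H_{\oB_s,\oA_s}\otimes\C$ is spanned by the holomorphic differentials $\omega_{1,s},\dots,\omega_{g,s}$ (up to the $\C(1)$-twist), while $\omega_{\oA,s}$ — a differential of the third kind with $\Res\,\omega_{\oA,s}=\oA_s$ — represents the distinguished $F^{0}$-lift of the generator of $\Gr^{W}_{0}=\Z(0)$; together they span $F^{0}H_{\oB_s,\oA_s}\otimes\C$, a space of dimension $g+1$.

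\emph{Step 3 (the period matrix).} Pairing this basis of $F^{0}$ against the frame of Step~1 produces the period matrix; the normalizations $\int_{a_{i,s}}\omega_{j,s}=\delta_{ij}$ from~\eqref{eq:7} and $\int_{a_{i,s}}\omega_{\oA,s}=0$ from~\eqref{eq:8}, together with $\int_{\delta}\omega_{j,s}=\int_{\delta}\omega_{\oA,s}=0$ (as $\delta$ avoids $|\oA_s|$), collapse it to the standard block form whose remaining free entries are exactly $\bigl(\int_{b_{i,\tilde s}}\omega_{j,s}\bigr)_{i,j}$, $\bigl(\int_{\gamma_{\oB_{\tilde s}}}\omega_{j,s}\bigr)_{j}$, $\bigl(\int_{b_{i,\tilde s}}\omega_{\oA,s}\bigr)_{i}$ and $\int_{\gamma_{\oB_{\tilde s}}}\omega_{\oA,s}$. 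That this quadruple lands in $\widetilde X$ then follows from the Riemann bilinear relations (symmetry and positive-definiteness of the imaginary part of the first block), the other three blocks lying in $\mathrm{Row}_{g}(\C)$, $\mathrm{Col}_{g}(\C)$ and $\C$ for free.

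The step requiring the most care is the precise bookkeeping in Steps~1 and~2: identifying $\omega_{\oA,s}$ as \emph{the} $F^{0}$-lift of the $\Z(0)$-generator of the biextension — that is, carrying the relative-cohomology and logarithmic-residue data of~\eqref{poin-res}--\eqref{eq:70} through the $\C(1)$-twist so that the normalization $\Res\,\omega_{\oA,s}=\oA_s$ genuinely matches the integral structure — and, symmetrically, checking that $[\gamma_{\oB_{\tilde s}}]$ is the correct $\Z(0)$-generator on the homological side with $\delta$ the matching $\Z(1)$-generator. Once these identifications are pinned down, reading off the period matrix is routine; the behaviour of $\widetilde\Phi$ under the monodromy, and its descent to the map $\Phi\colon U\to\sP_{g}^{\times}$ of Section~\ref{sec:arch-heights-poinc}, are taken up separately in the remainder of this subsection.
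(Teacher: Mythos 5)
Your overall strategy is the same as the paper's: fix the integral frame of the (homological) biextension given by the $\Z(1)$-generator, the spread-out symplectic basis $a_{i},b_{i}$ and the relative chain $\gamma_{\oB}$, observe that $F^{0}$ is spanned (through the de Rham--Betti duality) by $\omega_{1},\dots,\omega_{g},\omega_{\oA}$, and read the four blocks of \eqref{eq:51} off the period matrix using the normalizations \eqref{eq:7} and \eqref{eq:8}. However, there is a concrete error in Step~3, and it sits exactly at the point you yourself flag as delicate. You assert $\int_{\delta}\omega_{\oA,s}=0$ ``as $\delta$ avoids $|\oA_{s}|$''. This is inconsistent with $\delta$ generating the weight-$(-2)$ piece: that piece of $H_{1}(C_{s}\setminus \oA_{s},\oB_{s};\Z)$ is generated precisely by small loops \emph{encircling} points of $|\oA_{s}|$ (a loop that does not wind around $|\oA_{s}|$ is trivial in $\Gr^{W}_{-2}$), and since $\Res \omega_{\oA,s}=\oA_{s}$, the residue theorem gives $\int_{\delta}\omega_{\oA,s}=\pm 2\pi i$ (times the relevant momentum), not $0$. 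In the paper this is encoded in the statement that the boundary of a small disc around $\sigma_{\rl,1}$ maps to $\ps_{\rl,1}\alpha_{\oA}$, so that $\omega_{\oA}$ is, up to the $\Z(1)$-twist, the functional dual to $\alpha_{\oA}$. (Also, $\int_{\delta}\omega_{j,s}=0$ is true, but because $\omega_{j,s}$ is holomorphic across the encircled point, not because $\delta$ ``avoids'' the support.)

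This is not a harmless slip, because the non-vanishing of $\int_{\delta}\omega_{\oA}$ is what gives the last two blocks their meaning. To express $[b_{i}]$ and $[\gamma_{\oB}]$ modulo $F^{0}$ in the basis $[\delta],[a_{1}],\dots,[a_{g}]$, one solves for the coefficient along $[\delta]$ by pairing with $\omega_{\oA}$; with your claimed vanishing, the $g+1$ functionals $\omega_{1},\dots,\omega_{g},\omega_{\oA}$ would annihilate the weight-$(-2)$ line and hence be degenerate on $H_{\C}/F^{0}$, contradicting the perfectness of the pairing, and your ``collapse to standard block form'' could not determine the $\mathrm{Col}_{g}(\C)$-entry $\bigl(\int_{b_{i}}\omega_{\oA}\bigr)_{i}$ nor the $\C$-entry $\int_{\gamma_{\oB}}\omega_{\oA}$ as coordinates of the biextension class. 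The fix is local: replace the assertion by $\int_{\delta}\omega_{\oA,s}=2\pi i$ (with the orientation convention of \eqref{poin-res}), i.e.\ keep track of the residue pairing between the $\Z(1)$-generator and $\omega_{\oA}$; the rest of your argument (identification of $F^{0}$, the normalizations, and the Riemann bilinear relations placing $\Omega$ in $\H_{g}$) then goes through and coincides with the paper's proof.
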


\begin{proof} We drop the index $s$ and work pointwise.    
Recall the definition of the biextension mixed Hodge structure
$H_{\mathfrak B, \mathfrak A}$ from Section
\ref{sec:biext}. 
The integral part $H_{\mathfrak B, \mathfrak A}$ has a basis given by
$\alpha_{\oA} ,a_{1},\dots,a_{g},b_{1},\dots,b_{g},\gamma
_{\oB}$, where $\alpha _{\oA}$ is the generator of $\Q(1)\subset
W_{-2}H_{\mathfrak B, \mathfrak A,\Q}$  determined by the divisor $\oA$. This
means that, if $\delta _{\rl} $ is a small negatively oriented disc
centered at $\sigma_{\rl,1}$, then the image of  $\partial \delta
_{\rl} $ in $H_{\mathfrak B, \mathfrak A,\Q}$ is $\ps_{\rl,1} \alpha _{\oA}$.

 The
quotient $H_{\mathfrak B, \mathfrak A,\C}/F^{0}$ has a basis given by the classes
\begin{equation}
  \label{eq:12}
  [\alpha_{\oA}] ,[a_{1}],\dots,[a_{g}]
\end{equation}
The class of the
biextension $H_{\mathfrak B, \mathfrak A}$ is given by the expression of 
the classes $[b_{1}],\dots,[b_{g}],[\gamma_{\oB}]$ is the basis
\eqref{eq:12}: 
\begin{displaymath}
  \begin{pmatrix}
    W & \rho \\ \Omega  & Z
  \end{pmatrix}
\end{displaymath}
with $\Omega \in \H_{g}$, $W\in \mathrm{Row}_{g}(\C)$, $Z\in
\mathrm{Col}_{g}(\C)$ and $\rho \in \C$. Given a path $\gamma $
representing a class in $H_{1}(C\setminus \oB,\oA;\Z)$, then, by the choice
of the forms $\omega  _{j}$ and $\omega _{\oA}$, the
expression of the class $[\gamma ]$ is the basis \eqref{eq:12} is given by
\begin{displaymath}
   \Big( \int_{\gamma }\omega _{\oA}, \int_{\gamma }\omega
   _{1},\dots,\int_{\gamma }\omega_{g} \Big),
\end{displaymath}
which proves the proposition. 
\end{proof}

We next describe the action of  the logarithm of monodromy maps $N_e$
on the entries in \eqref{eq:51}, for $e\in E$.  To this end we observe
first that, since the 
sections $\sigma _{l,i}$ do not meet the double points of $C_{0}$, the
vanishing cycles $a_{e}\in H_{1}(C_{s_{0}},\Z)$ can be lifted
canonically to cycles in $H_{1}(C_{s_{0}}\setminus \Sigma
_{s_{0}},\Z)$. These cycles will also be denoted by $a_{e}$. In this
group we can write 
\begin{equation} \label{eq:2}
  a_{e}=\sum_{i}c_{e,i} a_{i}+\sum _{l} d_{e,l,1} \gamma _{l,1} +
  \sum _{l} d_{e,l,2} \gamma _{l,2},
\end{equation}
where $\gamma _{l,i}$ is a small negatively oriented loop around
$\sigma _{l,i}(s_{0})$. By the choice of the basis $\{a_{i},b_{i}\}$
the coefficients  $c_{e,i}$ are zero for $i>h$. 

\smallskip

By the Picard-Lefschetz formula \eqref{eq:42}, the assumption
\eqref{eq:5} and the formula \eqref{eq:2}, we deduce that
\begin{align}
  N_{e}(b_{i})
  &=-\langle b_{i},a_{e}\rangle a_{e} = c_{e,i}a_{e},\label{eq:10}\\
  N_{e}(\gamma _{\oB_{s_{0}}})
  &=-\langle \gamma_{\oB_{s_{0}}},a_{e}\rangle a_{e} =
    -a_{e}\sum_{l}\ps_{l,2}d_{e,l,2}.\label{eq:11}
\end{align}
Since the forms $\omega _{j}$ and $\omega _{\oA}$ are defined
globally, they are invariant under monodromy. The integral of these
forms with respect to the vanishing cycles is computed using 
\eqref{eq:2}, \eqref{eq:8} and \eqref{eq:7}:
\begin{equation}\label{eq:9}
  \int_{a_{e}}\omega _{j}=c_{e,j},\quad \int_{a_{e}}\omega
  _{\oA_{s_{0}}}=\sum_{l} \ps_{l,1}d_{e,l,1}.
\end{equation}

Applying \eqref{eq:10}, \eqref{eq:11} and \eqref{eq:9} we deduce 
\begin{align*}
  N_e(\int_{b_i}\omega_{j,s_{0}})
  &= -\langle b_i,a_e\rangle\int_{a_e}\omega_{j,s} =
    c_{e,i}c_{e,j}\\
  N_e(\int_{\gamma_{\oB_{s_{0}}}}\omega_{j,s_{0}})  
  &= -\langle \gamma_{\oB_{s_{0}}},a_e\rangle\int_{a_e}\omega_{j,s}=
    -c_{e,j}\sum_{l}\ps_{l,2}d_{e,l,2}\\ 
  N_e(\int_{b_i}\omega _{\oA_{s_{0}}})
  &=-\langle b_i,a_e\rangle\int_{a_e}\omega_{\oA_{s_{0}}}=
    c_{e,i}\sum_{l}\ps_{l,1}d_{e,l,1},\\ 
  N_e(\int_{\gamma_{\oB_{s_{0}}}}\omega _{\oA_{s_{0}}})
  &= -\langle \gamma_{\oB_{s_{0}}},a_e\rangle
    \int_{a_e}\omega_{\oA_{s_{0}}}=
    -\Big(\sum_{l}\ps_{l,1}d_{e,l,1}\Big)
    \Big(\sum_{k}\ps_{k,2}d_{e,k,2}\Big).
  \end{align*}
We introduce the matrices $\widetilde M_{e}$, $\widetilde W_{e}$,
$\widetilde Z_{e}$ and
$\Gamma _{e}$ given by
\begin{gather*}
  (\widetilde M_{e})_{i,j}=c_{e,i}c_{e,j},\qquad
  (\widetilde W_{e})_{l,j}=-c_{e,j}d_{e,l,2},\\ (\widetilde
  Z_{e})_{i,l}=c_{e,i}d_{e,l,1},\qquad  
  (\Gamma _{k,l})=-d_{e,k,2}d_{e,l,1}.
\end{gather*}
Then the logarithm of the monodromy is given by the element of the Lie
algebra of $\widetilde G$
\begin{equation}\label{eq:14}
N_{e}=
  \begin{pmatrix} 0 & 0 & \underline{\ps}_{2}\widetilde W_{e} &
    \underline{\ps}_{2}\Gamma _{e} {^{t}\underline{\ps}_{1}}  \\0 &
    0 & \widetilde M_{e} & \widetilde Z_{e} {^{t}\underline{\ps}_{1}}\\
    0 & 0 & 0 & 0 \\ 0 & 0 & 0 & 0\end{pmatrix}. 
\end{equation}
Note that all the entries of this matrix are integers. 

\smallskip

By Proposition~\ref{rmk25}, the matrix $\widetilde M_{e}$ is the
$h\times h$ matrix $M_{e}$ from Section \ref{sec:sym}  
filled with zeros to a $g \times g$
matrix. Similarly, the matrix $\widetilde W_{e}$ (resp. $\widetilde{Z}_{e}$) is the extension with
zeros of a matrix $W_{e}$ (resp. $Z_e$) that has only $h$ columns (resp. rows). 

\smallskip

The choice of the path $\gamma _{\oB}$ determines a preimage $\omega
_{2}$ of the vector $\p_{2}^{G}$ in $\Z^{E}$ by counting the number of
times (with sign) that $\gamma _{\oB}$ crosses the vanishing cycle
$a_{e}$. Similarly, the form $\omega _{\oA}$ determines a preimage
$\omega _{1}$ of $\p_{1}^{G}$ in $\C^{E}$ with $e$-th component given
by 
\begin{displaymath}
  \int_{a_{e}}\omega _{\oA}.
\end{displaymath}
Recall the definitions of $W_{e}(\omega )$ and $Q_{e}(\omega
_{1},\omega _{2})$ given in Definition \ref{def:1}. 

\begin{prop}\label{prop:3}
  The following equalities hold:
  \begin{displaymath}
    Z_{e} {^{t}\underline{\ps}_{1}}=-W_{e}(\omega _{1}),\qquad
    \underline{\ps}_{2}W_{e} = \,{^{t}}W_{e}(\omega _{2}),
    \qquad 
    \underline{\ps}_{2}\Gamma _{e}
    {^{t}\underline{\ps}_{1}}=-\,Q_{e}(\omega _{1},\omega _{2}). 
  \end{displaymath}
\end{prop}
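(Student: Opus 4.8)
The plan is to unwind both sides of each of the three equalities into the integers $c_{e,i}$, $d_{e,l,1}$, $d_{e,l,2}$ of \eqref{eq:2}; once two coordinate computations are in place, each identity collapses to a one-line manipulation, and the only delicate point is to make the signs land where claimed.

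First I would record the $e$-th coordinate of the two preimages. By the very definition of $\omega_{1}\in\C^{E}$, its $e$-th coordinate is $e^{\vee}(\omega_{1})=\int_{a_{e}}\omega_{\oA}$, and this equals $\sum_{l}\ps_{l,1}d_{e,l,1}$ by \eqref{eq:9}; likewise, by the definition of $\omega_{2}\in\Z^{E}$ as the edge vector counting intersections of $\gamma_{\oB}$ with the vanishing cycles, its $e$-th coordinate is $e^{\vee}(\omega_{2})=\langle\gamma_{\oB},a_{e}\rangle=\sum_{l}\ps_{l,2}d_{e,l,2}$ by \eqref{eq:11}. Second I would compute the edge functional on the basis $\gamma_{i}=b_{i}$ of $H=H_{1}(G,\K)$ entering Definition~\ref{def:1}: under the identification $B\simeq H_{1}(G,\Z)\subset\Z^{E}$ from Proposition~\ref{rmk25} the $e$-th coordinate of $b_{i}$ is the intersection number $\langle b_{i},a_{e}\rangle$ (this is the explicit form of the quotient map $H_{1}(C_{s_{0}},\Z)/A'\simeq H_{1}(G,\Z)$ used in the proof that the monodromy form \eqref{45bis} agrees with $\langle\cdot\,,\cdot\rangle_{e}$), and comparing \eqref{eq:10} with the Picard--Lefschetz formula \eqref{eq:42} gives $\langle b_{i},a_{e}\rangle=-c_{e,i}$. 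Hence $e^{\vee}(\gamma_{i})=-c_{e,i}$, whose square recovers $M_{e}=(c_{e,i}c_{e,j})$, consistently with Proposition~\ref{rmk25}.

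Granting these, the three identities are immediate. From the formula for $\widetilde Z_{e}$ preceding \eqref{eq:14} I get $(Z_{e}\,{}^{t}\underline{\ps}_{1})_{i}=c_{e,i}\sum_{l}\ps_{l,1}d_{e,l,1}=c_{e,i}\,e^{\vee}(\omega_{1})$, while the $i$-th entry of $W_{e}(\omega_{1})$ is $\langle\gamma_{i},\omega_{1}\rangle_{e}=e^{\vee}(\gamma_{i})\,e^{\vee}(\omega_{1})=-c_{e,i}\,e^{\vee}(\omega_{1})$; so $Z_{e}\,{}^{t}\underline{\ps}_{1}=-W_{e}(\omega_{1})$. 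Similarly $(\underline{\ps}_{2}W_{e})_{j}=\sum_{l}\ps_{l,2}(-c_{e,j}d_{e,l,2})=-c_{e,j}\,e^{\vee}(\omega_{2})=\langle\gamma_{j},\omega_{2}\rangle_{e}$, which is the statement $\underline{\ps}_{2}W_{e}={}^{t}W_{e}(\omega_{2})$; and finally $\underline{\ps}_{2}\Gamma_{e}\,{}^{t}\underline{\ps}_{1}=-\bigl(\sum_{k}\ps_{k,2}d_{e,k,2}\bigr)\bigl(\sum_{l}\ps_{l,1}d_{e,l,1}\bigr)=-\,e^{\vee}(\omega_{1})\,e^{\vee}(\omega_{2})=-\langle\omega_{1},\omega_{2}\rangle_{e}=-Q_{e}(\omega_{1},\omega_{2})$. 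For $\vs$-valued momenta one runs the same computation bilinearly, reading every product $e^{\vee}(\cdot)\,e^{\vee}(\cdot)$ through $\langle\cdot\,,\cdot\rangle_{q}$ exactly as in the extension of $\langle\cdot\,,\cdot\rangle_{e}$ to $\K^{E}\otimes\vs$ recalled after \eqref{eq:21}.

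The hard part is not conceptual but consists in keeping every orientation convention aligned — those for the loops $\gamma_{l,i}$, for the intersection pairing, and for the residue normalization of $\omega_{\oA}$, all already fixed in the passage from \eqref{eq:2} to \eqref{eq:14} — so that $e^{\vee}(\gamma_{i})=-c_{e,i}$ holds with precisely this sign, which is what places the two minus signs on $Z_{e}\,{}^{t}\underline{\ps}_{1}$ and on $\underline{\ps}_{2}\Gamma_{e}\,{}^{t}\underline{\ps}_{1}$ and nowhere else. (It is worth noting, though not needed for these three identities, that $\omega_{1}$ and $\omega_{2}$ are genuine preimages of $\ps_{1}^{G}$ and $\ps_{2}^{G}$, for instance $\omega_{2}$ being the flow through the nodes of $C_{0}$ of the specialization of the chain $\gamma_{\oB}$, so that the $Q_{e}$ and $W_{e}(\cdot)$ appearing here are the ones from Section~\ref{sec:sym}.)
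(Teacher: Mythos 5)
Your proposal is correct and matches the paper's own argument: the paper likewise evaluates $W_e(\omega_1)$, $W_e(\omega_2)$ and $Q_e(\omega_1,\omega_2)$ by pairing against the vanishing cycle $a_e$ (equivalently, taking $e$-th coordinates), using \eqref{eq:9}--\eqref{eq:11} to express everything in the integers $c_{e,i}$, $d_{e,l,1}$, $d_{e,l,2}$, with the same sign bookkeeping $\langle b_i,a_e\rangle=-c_{e,i}$ that you single out. The only cosmetic difference is that you first record the coordinates $e^\vee(\omega_1)$, $e^\vee(\omega_2)$, $e^\vee(b_i)$ and then multiply, while the paper multiplies the intersection numbers directly; the content is identical.
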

\begin{proof}
  The
  $j$-th component of $W_{e}(\omega _{2})$ is given by
  \begin{displaymath}
    W_{e}(\omega _{2})_{j}=\langle b_{j},\gamma _{\oB}\rangle_{e}=
    \langle  b_{j},a_e\rangle\langle\gamma _{\oB}, a_e\rangle
    =-c_{e,j}\sum_{l}\ps_{l,2}d_{e,l,2}=(\underline{\ps}_{2}
    W_{e})_{j}. 
  \end{displaymath}
  The
  $i$-th component of $W_{e}(\omega _{1})$ is given by
  \begin{displaymath}
    W_{e}(\omega _{1})_{i}=\langle b_{i},a_e\rangle\int_{a_e}\omega _{\oA}
    =-c_{e,i}\sum_{l} \ps_{l,1}d_{e,l,1}=-(\widetilde Z_{e}
    {^{t}\underline{\ps}_{1}})_{i}. 
  \end{displaymath}
  Finally
  \begin{displaymath}
    Q_{e}(\omega _{1},\omega _{2})=
    \langle\gamma _{\oB}, a_e\rangle \int_{a_e}\omega
    _{\oA}=\sum_{k,l}
    \ps_{k,2}d_{e,k,2}d_{e,l,1}\ps_{l,1}=-\underline{\ps}_{2}\Gamma
    _{e} 
    {^{t}\underline{\ps}_{1}}.
  \end{displaymath}
\end{proof}

\smallskip

Since the monodromy is given by an element of $\widetilde G(\Z)$, the
map $\widetilde{\Phi}$ descends to $U$, making the following diagram
commutative:  
\begin{equation}
\label{diag-phi}
\begin{CD} 
\widetilde{U} @>\widetilde\Phi>> 
\H_{g}\times \text{Row}_{g}(\C)\times \text{Col}_{g}(\C)\times \C \\
@VVV @VVV \\
U @>\Phi>> 
\widetilde{G}(\Z)\backslash\Big(\H_{g}\times \text{Row}_{g}(\C)\times \text{Col}_{g}(\C)\times \C\Big).
\end{CD}
\end{equation}

Clearly the definition of the map $\widetilde \Phi$ can be extended to
the  case when $D=1$ and the divisors are real-valued. But in
this case the monodromy will not be integral valued and the map
$\widetilde \Psi $ will not descend to $U$. Thus there is no
analogue to the diagram 
\eqref{diag-phi}. Finally we extend to the case of $\R^{D}$-valued
divisors simply working componentwise. That is, when the divisors
$\underline \ps_{1}$ and $\underline \ps_{2}$ have values in $\R^{D}$
we define a period map 
\begin{displaymath}
  \widetilde \Psi \colon \widetilde U \longrightarrow 
(\H_{g}\times \text{Row}_{g}(\C)\times \text{Col}_{g}(\C)\times
\C )^{D}. 
\end{displaymath}

\subsection{Asymptotic of the period map  and proof of
  Theorem~\ref{thm:main}}
\label{sec:asympt-peri-map}

The proof of Theorem~\ref{thm:main} is based on the Nilpotent Orbit
Theorem. 
We refer to the paper by Schmidt~\cite{S} and
Cattani-Kaplan-Schmidt~\cite{CKS} for  
the the case of variations of polarized
pure Hodge structures.  
We need the more general case of a variation of mixed Hodge
structures~\cite{KNU, P}.  We actually only need a small part of the
Nilpotent Orbit Theorem that can be found in \cite[Section
6]{pearl:higgs}.

\smallskip

Back to the case of integral valued divisors, consider the
diagram \eqref{diag-phi}. The action of the fundamental group $\Z^E$
of $U$ is unipotent, and we write $N_e$ for the
logarithm of the generator $1_e \in \Z^E$. These operators are given
explicitly in \eqref{eq:14}. To ease notation we write
\begin{displaymath}
  \widetilde X=\H_{g}\times
\mathrm{Row}_{g}(\C)\times \mathrm{Col}_{g}(\C)\times \C.
\end{displaymath}
The untwisted period map
\begin{equation}
  \label{eq:18}
\widetilde\Psi(z)=\exp(-\sum_E z_eN_e)\widetilde\Phi(z)  
\end{equation}
takes values in a ``compact dual'' $\check{\mathcal M}$ which is
(essentially) a flag
variety which parametrizes filtrations $F^*\mathbb C^{g+2}$ which
satisfy the conditions to be the Hodge filtration on a biextension of
genus $g$. The space $\check{\mathcal M}$ contains $\widetilde X$ as
an open subset. It is called the compact dual by analogy with the
theory of semisimple Lie groups although in general is not compact.
Since the map $\widetilde \Psi$ is invariant  under the transformation
$z_e \mapsto z_e+1$, it descends to a map $\Psi: U \to
\check{\mathcal{M}}$.  

\smallskip

 We separate the variables corresponding to the edges as $
 s_E$, and write any point $s$ of $U$ as $s =
 s_E \times s_{E^c}$. The coordinates in the
 universal cover $\widetilde U$ 
 will be denoted by $z_{e}$. The projection $\widetilde U\to U$ is
 given in these coordinates by
 \begin{equation}
   \label{eq:16}
   s_{e}=
   \begin{cases}
     \exp(2\pi i z_{e}),&\text{ for }e\in E,\\
     z_{e},&\text{ for }e\not \in E.
   \end{cases}
 \end{equation}

The following result is the part of the Nilpotent Orbit Theorem that
we need. A proof of it for admissible variations of mixed Hodge
structures can be found in \cite[Section 6]{pearl:higgs}. Recall that
$\Delta $ is a disk of small radius and we denote $S=\Delta ^{3g-3}$.
 
\begin{thm}\label{thm:3}
After shrinking the radius of $\Delta $ if necessary, the map $\Psi$ extends
to a holomorphic map
\begin{displaymath}
  \Psi: S \longrightarrow \check{\mathcal{M}}. 
\end{displaymath}
Moreover, there exists a constant $h_{0}$ such that, if for all $e\in
E$, $\Im(z_{e})\ge h_{0}$, then  
\begin{displaymath}
  \exp(\sum_{e\in E} z_eN_e)\Psi(s)\in \widetilde X.
\end{displaymath}
\end{thm}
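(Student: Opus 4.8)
The plan is to obtain this as a special case of the several-variable Nilpotent Orbit Theorem for admissible variations of mixed Hodge structures, in the form recorded in \cite[Section 6]{pearl:higgs}; the work lies in matching the hypotheses. First I would collect the structural facts already established: the variation $s\mapsto H_{\oB_s,\oA_s}$ is admissible over $U=(\Delta^*)^E\times\Delta^{3g-3-|E|}$ (it underlies the mixed Hodge module attached to the local system $H_1(C_s\setminus\oA_s,\oB_s;\Z)$), its local monodromy around each $D_e$ is unipotent --- by \eqref{eq:14} each $N_e$ lies in the Lie algebra of $\widetilde G$ and is nilpotent, indeed $N_eN_{e'}=0$ for all $e,e'$ directly from the block form, so the $N_e$ commute automatically --- and the untwisted period map $\widetilde\Psi(z)=\exp(-\sum_E z_eN_e)\widetilde\Phi(z)$ of \eqref{eq:18} takes values in the compact dual $\check{\mathcal M}$ and is invariant under $z_e\mapsto z_e+1$ for $e\in E$, hence descends to $\Psi\colon U\to\check{\mathcal M}$ as in \eqref{diag-phi}.

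For the first assertion I would prove the holomorphic extension of $\Psi$ across $D$ by separating the pure from the extension directions of $\check{\mathcal M}$. The space $\check{\mathcal M}$ fibers over the compact dual $\check{\H}_g$ of the Siegel domain --- a Lagrangian Grassmannian, hence a smooth projective variety --- with fibers isomorphic to $\mathrm{Row}_g(\C)\times\mathrm{Col}_g(\C)\times\C$, since the Hodge filtrations on a biextension lying over a fixed point of $\check{\H}_g$ form such an affine space. The induced map $U\to\check{\H}_g$ is the untwisted period map of the pure weight $-1$ part $\gr^W_{-1}H_{\oB,\oA}=H^1(C_s,\Z(1))$, which extends holomorphically over $D$ to a map $S\to\check{\H}_g$ by the (pure, several-variable) Nilpotent Orbit Theorem \cite{S, CKS}, using properness of $\check{\H}_g$. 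For the remaining coordinates of $\Psi$ --- the components of $\widetilde\Psi$ along the $\mathrm{Row}_g(\C)$, $\mathrm{Col}_g(\C)$ and $\C$ factors --- one invokes the admissibility of $H_{\oB,\oA}$: the relative monodromy weight filtration of $\sum_E N_e$, and of each partial sum, exists, and the norm estimates for admissible variations \cite{KNU, P} bound these untwisted coordinates as $\min_{e\in E}\Im(z_e)\to\infty$; since they are moreover invariant under $z_e\mapsto z_e+1$, Riemann's removable singularity theorem gives the holomorphic extension. Shrinking the radius of $\Delta$ if necessary, one obtains $\Psi\colon S\to\check{\mathcal M}$.

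For the second assertion, note that for $s\in U$ the point $\exp(\sum_E z_eN_e)\Psi(s)$ is just $\widetilde\Phi$ evaluated on a lift of $s$, so it lies in $\widetilde X$ by construction; the real content is that this persists for $s$ on the boundary divisor $D$, where $\exp(\sum_E z_eN_e)\Psi(s)$ is the nilpotent orbit attached to the limiting mixed Hodge structure. This is the second half of the Nilpotent Orbit Theorem: the orbit approximates the period map with an error decaying as $\min_{e\in E}\Im(z_e)\to\infty$, so it inherits membership in the open subset $\widetilde X\subset\check{\mathcal M}$ once all $\Im(z_e)\ge h_0$, and a single $h_0$ suffices over all of $S$ because there are only finitely many boundary strata. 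The step I expect to be the main obstacle is the boundedness of the extension-class coordinates of $\Psi$ after untwisting: unlike the pure part, this genuinely uses the mixed (admissible) refinement --- control of the relative weight filtration and the accompanying $\mathrm{SL}_2$-orbit estimates --- which is precisely what \cite[Section 6]{pearl:higgs} supplies.
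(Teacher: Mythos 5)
Your overall route is the paper's own: the paper does not prove Theorem \ref{thm:3} internally but, having established in the preceding subsections that the variation is admissible (via mixed Hodge modules) and having computed the unipotent logarithms $N_e$ in \eqref{eq:14}, it simply quotes the nilpotent orbit theorem for admissible variations of mixed Hodge structure from \cite[Section 6]{pearl:higgs}. Your hypothesis-matching (admissibility, nilpotency and commutativity of the $N_e$ --- indeed $N_eN_{e'}=0$ from the block form --- and the descent of the untwisted map \eqref{eq:18} to $U$) is exactly that reduction. The further sketch of how the extension itself would be proved (pure part over the compact dual of $\H_g$ via \cite{S,CKS}, extension data via boundedness and Riemann extension) goes beyond what the paper attempts and is plausible in outline, though the appeal to the $SL(2)$-orbit estimates of \cite{KNU} is not really the relevant input: local boundedness of the untwisted extension coordinates near $D$ is essentially the assertion being proved, and it is precisely what \cite[Section 6]{pearl:higgs} supplies.

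The one step that would fail as stated is your treatment of the second assertion. You claim that for $s\in U$ the membership $\exp(\sum_e z_eN_e)\Psi(s)\in\widetilde X$ holds ``by construction'', so that the only content is at points of $D$. This is true only when $(z_e)$ is a lift of $s$. In the theorem --- and in the way it is used immediately afterwards, where $\Psi_0(s)=\exp(\sum_e ih_0N_e)\Psi(s)$ is formed with the fixed value $z_e=ih_0$ for every $s\in S$ and then fed into Proposition \ref{prop:2} --- the variables $(z_e)$ are decoupled from $s$. For an interior point $s$ whose lift satisfies $y_e^{\mathrm{lift}}=\Im(z_e^{\mathrm{lift}})\gg h_0$ one has $\exp(\sum_e ih_0N_e)\Psi(s)=\exp\bigl(\sum_e(ih_0-z_e^{\mathrm{lift}})N_e\bigr)\widetilde\Phi(z^{\mathrm{lift}})$, whose $\H_g$-component has imaginary part $\Im\Omega(z^{\mathrm{lift}})-\sum_e(y_e^{\mathrm{lift}}-h_0)\widetilde M_e$, with $\Omega(z^{\mathrm{lift}})$ the first entry of $\widetilde\Phi(z^{\mathrm{lift}})$; its positivity is not automatic, since one subtracts a positive semidefinite matrix. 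So the interior/boundary dichotomy misses exactly the cases the paper needs, and similarly the uniformity of $h_0$ over $S$ does not follow merely from the finiteness of the boundary strata. Both points are part of the uniform statement established in \cite[Section 6]{pearl:higgs}, so your final appeal to that reference still closes the argument; only the claim that the interior case is trivial should be dropped.
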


\smallskip

We now write $\Psi _0(s) = \exp(\sum_{e} i h_0N_e)\Psi
(s)$ so $\Psi _0\colon S\to \widetilde X$ is holomorphic.
We write
\begin{equation}\label{eq:19}
  \Psi _0(s)=(\Omega _{0}(s),
  W_{0}(s),Z_{0}(s),\rho _{0}(s)),
\end{equation}
\begin{equation}
  \label{eq:15}
  y_{e}=\Im(z_{e})=\frac{-1}{2\pi }\log|s_{e}|.
\end{equation}

\smallskip

Gathering together all the computations we have made we obtain an
expression for the height pairing function.

\begin{prop}\label{prop:2} The height pairing is given by
  \begin{multline}\label{eq:prop57}
    \langle \oA_{s},\oB_{s}\rangle=
    -2\pi \Im(\rho _{0})-\sum_{e\in E}2\pi
    y'_{e}\underline{\ps}_{2}\Gamma 
    _{e} \,{^{t}\underline{\ps}_{1}}+\\
    2\pi \Big(\Im(W_{0})+\sum_{e\in
      E}y'_{e}\underline{\ps}_{2}\widetilde W_{e}\Big)\cdot 
    \Big(\Im(\Omega _{0})+\sum_{e\in E}y'_{e}\widetilde M_{e}\Big)^{-1}\\
   \cdot \Big(\Im(Z_{0})+\sum_{e\in
      E}y'_{e}\widetilde Z_{e}\,{^{t}\underline{\ps}_{1}}\Big), 
  \end{multline}
  where $y_{e}'=y_{e}-h_{0}$.
\end{prop}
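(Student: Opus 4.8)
The plan is to read off the right-hand side of \eqref{eq:prop57} from the explicit invariant metric \eqref{eq:64} on the Poincar\'e bundle, evaluated at the period point, after rewriting the period map through the nilpotent orbit datum $\Psi_{0}$. By Proposition~\ref{height-pm} we have $\langle\oA_{s},\oB_{s}\rangle=\log\|H_{\oB_{s},\oA_{s}}\|$, and by construction the biextension $H_{\oB_{s},\oA_{s}}$ is the image of $\widetilde\Phi(\tilde s)\in\widetilde X$ under $\widetilde X\to\widetilde G(\Z)\backslash\widetilde X$. Since the function \eqref{eq:64} is invariant under $\widetilde G_{\R}\supseteq\widetilde G(\Z)$ (checked in the proof of Theorem~\ref{thm:2}), $\log\|\widetilde\Phi(\tilde s)\|$ does not depend on the chosen lift $\tilde s$ over $s$, so it suffices to plug $\widetilde\Phi(\tilde s)$ into \eqref{eq:64} and express the result in terms of $\Psi_{0}(s)=(\Omega_{0},W_{0},Z_{0},\rho_{0})$ as in \eqref{eq:19}.

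First I would unwind the relation between $\widetilde\Phi$ and $\Psi_{0}$. Combining \eqref{eq:18} with the definition $\Psi_{0}(s)=\exp(\sum_{e\in E}ih_{0}N_{e})\Psi(s)$ gives $\widetilde\Phi(z)=\exp\bigl(\sum_{e\in E}(z_{e}-ih_{0})N_{e}\bigr)\Psi_{0}(s)$, the coordinates $s_{e}$ with $e\notin E$ entering only through $\Psi_{0}$; by \eqref{eq:51} this point always lies in $\widetilde X$. From the explicit shape \eqref{eq:14} one sees at once that $N_{e}N_{e'}=0$ for all $e,e'\in E$, so the exponential collapses to $\id+\sum_{e\in E}c_{e}N_{e}$ with $c_{e}=z_{e}-ih_{0}$. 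Then I would factor this unipotent element, with no cross-terms, as a product of one-parameter subgroups of the type \eqref{action:Omega} with $C=0$, $A=D=\id$, $B=\sum c_{e}\widetilde M_{e}$; followed by \eqref{W} with $\lambda_{1}=0$, $\lambda_{2}=\sum c_{e}\underline{\ps}_{2}\widetilde W_{e}$; then \eqref{action:Z} with $\mu_{2}=0$, $\mu_{1}=\sum c_{e}\widetilde Z_{e}\,{^{t}\underline{\ps}_{1}}$; and finally \eqref{action:alpha} with $\alpha=\sum c_{e}\underline{\ps}_{2}\Gamma_{e}\,{^{t}\underline{\ps}_{1}}$. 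Applying these four transformations in turn to $\Psi_{0}(s)$, and noting that $C\Omega+D=\id$ throughout so that $W$, $Z$ and $\rho$ are never twisted, yields
\begin{displaymath}
\widetilde\Phi(z)=\Bigl(\Omega_{0}+\textstyle\sum_{e\in E}c_{e}\widetilde M_{e},\ W_{0}+\sum_{e\in E}c_{e}\underline{\ps}_{2}\widetilde W_{e},\ Z_{0}+\sum_{e\in E}c_{e}\widetilde Z_{e}\,{^{t}\underline{\ps}_{1}},\ \rho_{0}+\sum_{e\in E}c_{e}\underline{\ps}_{2}\Gamma_{e}\,{^{t}\underline{\ps}_{1}}\Bigr).
\end{displaymath}

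Finally I would take imaginary parts. The matrices $\widetilde M_{e},\widetilde W_{e},\widetilde Z_{e},\Gamma_{e}$ are real and, since in this section the external momenta are integral, so are $\underline{\ps}_{1},\underline{\ps}_{2}$; moreover $\Im(c_{e})=\Im(z_{e})-h_{0}=y_{e}-h_{0}=y'_{e}$ by \eqref{eq:15}. Substituting $\Im\Omega=\Im\Omega_{0}+\sum y'_{e}\widetilde M_{e}$, $\Im W=\Im W_{0}+\sum y'_{e}\underline{\ps}_{2}\widetilde W_{e}$, $\Im Z=\Im Z_{0}+\sum y'_{e}\widetilde Z_{e}\,{^{t}\underline{\ps}_{1}}$ and $\Im\rho=\Im\rho_{0}+\sum y'_{e}\underline{\ps}_{2}\Gamma_{e}\,{^{t}\underline{\ps}_{1}}$ into \eqref{eq:64} produces exactly \eqref{eq:prop57}. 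I expect the only genuinely delicate point to be the bookkeeping in the factorization of $\id+\sum c_{e}N_{e}$ into the elementary one-parameter subgroups and the verification that their composite produces no spurious term in the $\rho$-slot (which is why the vanishing of the off-diagonal blocks, i.e. $C=0$ and $\lambda_{1}=\mu_{2}=0$, must be tracked carefully); everything else is a direct substitution.
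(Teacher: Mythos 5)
Your proposal is correct and follows the paper's own route: Proposition~\ref{height-pm} to identify the height with $\log\|\widetilde\Phi(z)\|$, the explicit nilpotent operators \eqref{eq:14} together with \eqref{eq:18}, \eqref{eq:19}, \eqref{eq:15}, and the metric formula \eqref{eq:64}. The paper leaves the unipotent-action computation implicit, and your observation that $N_eN_{e'}=0$ (so the exponential is $\id+\sum c_eN_e$, acting by pure translations in the $\Omega$, $W$, $Z$, $\rho$ slots with no twisting since $C=0$, $\lambda_1=\mu_2=0$) is exactly the bookkeeping the authors omit.
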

\begin{proof}
  By Proposition \ref{height-pm} and equation \eqref{eq:18} we know that
  \begin{displaymath}
    \langle
    \oA_{s},\oB_{s}\rangle=\log\|H_{\oB_{s},\oA_{s}}\| =\log\| \widetilde 
    \Phi (z)\|=\log \|\exp(\sum_{e\in E}z_e\,N_e)\widetilde\Psi(z)\|.
  \end{displaymath}
The proposition follows from the explicit description of the
operators $N_{e}$ in \eqref{eq:14}, and the function $\log \| \cdot \|$ in Theorem
\ref{thm:2}, as well as equations \eqref{eq:19} and \eqref{eq:15}. 
\end{proof}

From the previous proposition we derive the following estimate.

\begin{thm}\label{thm:4}
  After shrinking the radius of $\Delta $ if necessary, the height pairing
  can be written as 
  \begin{multline}
    \label{eq:f}\langle \oA_{s},\oB_{s}\rangle=
    -\sum_{e\in E}2\pi y_{e}\underline{\ps}_{2}\Gamma
    _{e} \,{^{t}\underline{\ps}_{1}}\\
    +2\pi \Big(\sum_{e\in E}y_{e}\underline{\ps}_{2}W_{e}\Big)
    \Big(\sum_{e\in E}y_{e} M_{e}\Big)^{-1}
    \Big(\sum_{e\in
      E}y_{e}Z_{e}\,{^{t}\underline{\ps}_{1}}\Big)+h(s),
  \end{multline}
  where $h\colon U\to \R$ is a bounded function.
\end{thm}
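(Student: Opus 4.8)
The starting point is the formula for $\langle \oA_s,\oB_s\rangle$ in Proposition~\ref{prop:2}. The plan is to analyze each of the three summands there as $y_e\to\infty$ for $e\in E$ (the coordinates $s_{E^c}$ staying in a compact set), keeping careful track of which contributions are bounded and which survive. Since $\Psi_0\colon S\to\widetilde X$ is holomorphic on the polydisc $S$, the functions $\Omega_0(s)$, $W_0(s)$, $Z_0(s)$, $\rho_0(s)$ are continuous on $S$ and hence bounded after shrinking $\Delta$. The difference $y_e'-y_e=-h_0$ is a constant, so replacing $y_e'$ by $y_e$ throughout changes each term by a bounded quantity; I would absorb all such changes into the error function $h(s)$ at the end.

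The first term $-2\pi\Im(\rho_0)$ is bounded and goes straight into $h(s)$. For the first genuinely divergent piece, $-\sum_E 2\pi y_e'\,\underline\ps_2\Gamma_e\,{}^t\underline\ps_1$, replacing $y_e'$ by $y_e$ costs a bounded term, giving the first summand of \eqref{eq:f}. The crux is the middle term
$$
2\pi\Big(\Im(W_0)+\sum_E y_e'\,\underline\ps_2\widetilde W_e\Big)\Big(\Im(\Omega_0)+\sum_E y_e'\,\widetilde M_e\Big)^{-1}\Big(\Im(Z_0)+\sum_E y_e'\,\widetilde Z_e\,{}^t\underline\ps_1\Big).
$$
Here I would use the block structure established after Proposition~\ref{prop:3}: $\widetilde M_e$ is the $h\times h$ matrix $M_e$ padded by zeros, and likewise $\widetilde W_e$, $\widetilde Z_e$ involve only the first $h$ coordinates. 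Write $\Im(\Omega_0)$ in block form along the $h$ / $g-h$ split. The key analytic input is that $\sum_E y_e M_e$ is, up to a positive-definite perturbation of bounded norm from the $\Im(\Omega_0)$-block and up to the $(g-h)\times(g-h)$ block which stays bounded and bounded-below (it is the imaginary part of a period matrix of the limit curve's non-contracted part, positive definite), asymptotically dominated by its ``edge'' part; inverting the block matrix via the Schur complement shows that the inverse, when sandwiched between the vectors which live in the first $h$ coordinates, is $\big(\sum_E y_e M_e + O(1)\big)^{-1}$ acting on $\K^h$. Since $\sum_E y_e M_e$ is a matrix of linear forms in the $y_e$ with the $y_e\to\infty$, one has $\big(\sum_E y_e M_e+B\big)^{-1}=\big(\sum_E y_e M_e\big)^{-1}+\big(\sum_E y_e M_e\big)^{-1}\,O(1)\,\big(\sum_E y_e M_e\big)^{-1}$, and because $\big(\sum_E y_e M_e\big)^{-1}$ has entries that are $O(1/|y|)$ in size (its denominator $\psi_G(\underline y)$ is homogeneous of degree $h$, its numerators of degree $h-1$), the correction term is $O(1/|y|)$. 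Multiplying by the outer vectors $\Im(W_0)+\sum y_e\underline\ps_2\widetilde W_e = O(|y|)$ and similarly on the right, the cross terms involving $\Im(W_0)$ or $\Im(Z_0)$ against $\big(\sum y_eM_e\big)^{-1}\cdot O(|y|)$ are $O(1)$, and the main term is $2\pi\big(\sum_E y_e\underline\ps_2 W_e\big)\big(\sum_E y_e M_e\big)^{-1}\big(\sum_E y_e Z_e\,{}^t\underline\ps_1\big)$, which is exactly the second summand of \eqref{eq:f}. Collecting all the $O(1)$ and $O(1/|y|)$ remainders into a single bounded function $h\colon U\to\R$ completes the argument.

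The main obstacle is making the Schur-complement/perturbation estimate uniform: one must ensure that $\sum_E y_e M_e$ plus the bounded perturbation coming from $\Im(\Omega_0)$ (restricted to the relevant block) stays invertible with inverse of controlled size for \emph{all} $\underline y$ with $y_e$ large, not just along a fixed ray. This uses that $\sum_E y_e M_e$ is the matrix of the positive semidefinite quadratic form $\sum y_e e^{\vee,2}$ on $H_1(G,\R)$, which is in fact positive definite for $y_e>0$ (as $\psi_G(\underline y)=\det$ is a sum of monomials with positive coefficients, strictly positive for positive $y$), so that adding the positive semidefinite bounded part $\Im(\Omega_0)$ only helps; the quantitative bound $\|\big(\sum y_e M_e\big)^{-1}\|=O(1/\min_e y_e)$ then follows from homogeneity of $\psi_G$ together with strict positivity of $\psi_G$ on the simplex $\{y_e\ge 0,\ \sum y_e=1\}$ minus its boundary faces where some $y_e=0$ — one restricts to the region where all $y_e$ are large, on which $\psi_G(\underline y)\ge c\,(\min_e y_e)^h$ for a positive constant $c$. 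Once this uniform estimate is in hand, everything else is bookkeeping.
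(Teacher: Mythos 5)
Your overall route --- start from Proposition \ref{prop:2}, absorb $-2\pi\Im(\rho_0)$ and the shift $y_e'=y_e-h_0$ into the error, and compare the middle term with its ``homogeneous'' version via a Schur complement in the $h$ / $g-h$ block decomposition plus a perturbation of the inverse --- is the natural direct attack, and the reduction to the $h\times h$ block is fine (the Schur complement of the uniformly positive, bounded $(g-h)$-block of $\Im(\Omega_0)$ is a bounded positive perturbation of $\sum_e y_e M_e$). The genuine gap is in the quantitative bookkeeping: your estimates are not uniform over the region where the $y_e$ go to infinity at unrelated rates, which is exactly what ``$h\colon U\to\R$ is bounded'' demands (on $U$ one only knows $y_e\ge c_0$; nothing ties the $y_e$ to one another). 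The bound $\|(\sum_e y_eM_e)^{-1}\|=O(1/\min_e y_e)$ is correct, but it follows from $\sum_e y_eM_e\succeq(\min_e y_e)\sum_eM_e$ with $\sum_eM_e$ positive definite, not from the degree count you give (the adjugate entries can be of size $(\max_e y_e)^{h-1}$, so ``numerator degree $h-1$ over $\psi_G\ge c(\min_e y_e)^h$'' does not yield $O(1/|y|)$). More seriously, the outer vectors are of size $O(\max_e y_e)$, so with these norm bounds the cross terms involving $\Im(W_0)$ or $\Im(Z_0)$ are only $O(\max_e y_e/\min_e y_e)$, and the term coming from the perturbed inverse is only $O\bigl((\max_e y_e)^2/(\min_e y_e)^2\bigr)$; these are unbounded on $U$ (already for two edges with $y_{e_1}=T^2$, $y_{e_2}=T$). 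So the claimed ``$O(1)$'' and ``$O(1/|y|)$'' remainders are established only on sectors where $\max_e y_e/\min_e y_e$ stays bounded --- enough for a fixed admissible segment, but not for Theorem \ref{thm:4} as stated.

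The statement is true, but its proof needs finer structure than operator norms. For instance, by Sherman--Morrison and the matrix determinant lemma, $\mathrm{adj}\bigl(\Im(\Omega_0)+\sum_f y_f\widetilde M_f\bigr)c_e$ does not involve $y_e$, so the relevant numerators are multilinear in $\underline y$ and can be compared monomial by monomial with $\det\bigl(\Im(\Omega_0)+\sum_f y_f\widetilde M_f\bigr)$, whose coefficients are nonnegative mixed discriminants; alternatively one argues by induction over subsets of edges sent to infinity at different speeds. This is precisely what the paper outsources: after a polarization step reducing to the symmetric situation $W_0={}^{t}Z_0$, $\underline{\ps}_2\widetilde W_e=\underline{\ps}_1\,{}^{t}Z_e$, and using the rank-one relations $\widetilde M_e=c_e\,{}^{t}c_e$, $\widetilde Z_e\,{}^{t}\underline{\ps}_1=\widetilde M_e v$, the middle term of Proposition \ref{prop:2} becomes a normlike function and the uniform boundedness is exactly \cite[Theorem 3.2(1)]{BdJH}. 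To make your self-contained approach work you would have to prove that uniform statement yourself; the perturbation estimates as written do not.
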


\begin{proof}
  Since $\rho _{0}$ is a holomorphic function on $S=\Delta ^{3g-3}$, after
  shrinking the radius of $\Delta $ we can assume that
  $\Im(\rho _{0})$ is bounded. So we only need to prove that the third
  term in the right hand side of equation \eqref{eq:prop57} is, up to a bounded function, equal to the second term in the right hand side of \eqref{eq:f}. 

\smallskip

  Using that, for any symmetric bilinear form
  $\langle\cdot,\cdot\rangle$ the equality
  \begin{displaymath}
    2\langle a, b\rangle=
    \langle a+b, a+b\rangle-\langle a, a\rangle-\langle b, b\rangle
  \end{displaymath}
  holds, we may assume that $W_{0}={^{t}}Z_{0}$ and that
  $\underline{\ps}_{2}\widetilde
  W_{e}=\underline{\ps}_{1}\,{^{t}}Z_{e}$. 

\smallskip

On the other hand, if we denote by $c_{e}$ (resp. $d_{e,1}$) the column
  vector $(c_{e,i})_{i}$ (resp. $(d_{e,l,1})_{l}$), then
  \begin{displaymath}
    \widetilde M_{e}=c_{e}\, {}^{t}c_{e},\qquad \widetilde 
    Z_{e}=c_{e}\, {}^{t}d_{e}.
  \end{displaymath}
  Therefore we can choose a colum vector $v$ such that
  \begin{displaymath}
    \widetilde Z_{e}\,{^{t}\underline{\ps}_{1}}=\widetilde M_{e}v.
  \end{displaymath}
  For shorthand we write $a=\Im(Z_{0})$ and $B=\Im(\Omega _{0})$.
  Then 
  \begin{displaymath}
    \Big(\,^{t}a+\sum_{e\in
      E}y'_{e}\underline{\ps}_{1}\,{}^{t}\widetilde Z_{e}\Big)\cdot 
    \Big(B+\sum_{e\in E}y'_{e}\widetilde M_{e}\Big)^{-1}\\
   \cdot \Big(a+\sum_{e\in
      E}y'_{e}\widetilde Z_{e}\,{^{t}\underline{\ps}_{1}}\Big),
  \end{displaymath}
 is a normlike function in the terminology of \cite[Section
 3.1]{BdJH}. Taking this into account, the 
  result follows from \cite[Theorem 3.2~(1)]{BdJH}.
  \end{proof}

\begin{remark} A graph-theoretic proof of this theorem based on Equation~\ref{eq:21}, Proposition~\ref{prop:2}, and exchange properties between spanning trees and 2-forests in a graph is given in~\cite{Am}. 
\end{remark}

\smallskip

From Theorem \ref{thm:4}, Proposition \ref{prop:3} and
equation \eqref{eq:21} we derive: 
\begin{cor}\label{cor:1}
  The height pairing
  can be written as 
  \begin{displaymath}
    \langle \oA_{s},\oB_{s}\rangle=
    2\pi \frac{\phi_G(\underline \p_1^G, 
  \underline \p_2^G,\underline y)}{\psi_G(\underline y)}
+h(s),
  \end{displaymath}
  where $\underline y=(y_{e})_{e\in E}$ and $h\colon U\to \R$
  is a bounded function. 
\end{cor}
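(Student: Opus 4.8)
The plan is to feed the explicit asymptotic formula of Theorem~\ref{thm:4} through the monodromy dictionary of Proposition~\ref{prop:3} and then recognize the resulting main term as $2\pi$ times the ratio of Symanzik polynomials via the determinant identity~\eqref{eq:21}. No new analysis is needed beyond Theorem~\ref{thm:4}; the step is a pure substitution.

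First I would recall that Theorem~\ref{thm:4} gives, after shrinking $\Delta$,
\begin{equation*}
\langle \oA_{s},\oB_{s}\rangle=
    -2\pi\sum_{e\in E} y_{e}\,\underline{\ps}_{2}\Gamma_{e}\,{}^{t}\underline{\ps}_{1}
    +2\pi \Big(\sum_{e\in E}y_{e}\,\underline{\ps}_{2}W_{e}\Big)
    \Big(\sum_{e\in E}y_{e} M_{e}\Big)^{-1}
    \Big(\sum_{e\in E}y_{e}Z_{e}\,{}^{t}\underline{\ps}_{1}\Big)+h(s),
\end{equation*}
with $h$ bounded, where $\omega_{1}\in\C^{E}$ is the preimage of $\underline{\ps}_{1}^{G}$ under $\C^{E}\surj\ws\simeq\C^{V,0}$ whose $e$-th coordinate is $\int_{a_{e}}\omega_{\oA}$, determined by the normalized third-kind form $\omega_{\oA}$, and $\omega_{2}\in\Z^{E}$ is the preimage of $\underline{\ps}_{2}^{G}$ whose $e$-th coordinate is the signed intersection number of $\gamma_{\oB}$ with the vanishing cycle $a_{e}$ --- both already constructed just before Proposition~\ref{prop:3}. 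Substituting the three equalities of that proposition, namely $Z_{e}\,{}^{t}\underline{\ps}_{1}=-W_{e}(\omega_{1})$, $\underline{\ps}_{2}W_{e}={}^{t}W_{e}(\omega_{2})$ and $\underline{\ps}_{2}\Gamma_{e}\,{}^{t}\underline{\ps}_{1}=-Q_{e}(\omega_{1},\omega_{2})$, turns the first two terms into
\begin{equation*}
2\pi\Big[\sum_{e\in E}y_{e}Q_{e}(\omega_{1},\omega_{2})
-\Big(\sum_{e\in E}y_{e}\,{}^{t}W_{e}(\omega_{2})\Big)\Big(\sum_{e\in E}y_{e}M_{e}\Big)^{-1}\Big(\sum_{e\in E}y_{e}W_{e}(\omega_{1})\Big)\Big].
\end{equation*}

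Then I would observe that, writing $M=\sum_{e}y_{e}M_{e}$, $W(\omega_{i})=\sum_{e}y_{e}W_{e}(\omega_{i})$ and $Q(\omega)=\sum_{e}y_{e}Q_{e}(\omega_{1},\omega_{2})$ --- with all products of $\vs$-valued quantities interpreted through the Minkowski form $\langle\cdot\,,\cdot\rangle_{q}$ --- this bracket is exactly the right-hand side of the bilinear incarnation of~\eqref{eq:21}, hence equals $\phi_{G}(\underline{\ps}_{1}^{G},\underline{\ps}_{2}^{G},\underline{y})\slash\psi_{G}(\underline{y})$ by Definition~\ref{second_symanzik} together with Proposition~\ref{prop:second}. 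Adding back the bounded term $h(s)$ gives the asserted formula. There is therefore no genuine obstacle here: the only points requiring care are the bookkeeping of signs and of which momentum vector is paired with $W_{e}(\omega_{1})$ versus $W_{e}(\omega_{2})$, and the verification that $\omega_{1},\omega_{2}$ genuinely reduce to $\underline{\ps}_{1}^{G},\underline{\ps}_{2}^{G}$ in $\ws\otimes\vs\simeq\vs^{V,0}$ so that~\eqref{eq:21} applies with the correct external momenta; this last point is immediate from the monodromy description of Section~\ref{riemannsurf} (Proposition~\ref{rmk25} and the identification $B\simeq H_{1}(G,\Z)$) and from the definition of the restriction $\underline{\ps}_{i}\mapsto\underline{\ps}_{i}^{G}$. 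The substance of the corollary lies entirely in Theorem~\ref{thm:4}, whose proof already uses the Nilpotent Orbit Theorem and the normlike-function estimate of \cite{BdJH}.
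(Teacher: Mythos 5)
Your proposal is correct and follows exactly the route the paper takes: the corollary is obtained by substituting the three identities of Proposition~\ref{prop:3} into the asymptotic formula of Theorem~\ref{thm:4} and recognizing the main term as $2\pi\,\phi_G/\psi_G$ via the Schur-complement identity~\eqref{eq:21}, with the bounded error carried along unchanged. Your attention to the sign bookkeeping and to the fact that $\omega_1,\omega_2$ are preimages of $\underline{\ps}_1^G,\underline{\ps}_2^G$ matches the paper's setup preceding Proposition~\ref{prop:3}, so nothing is missing.
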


\begin{remark} \label{rem:1} Let now $\pi \colon \mathcal{C}' \to S'$ denote the
  versal analytic deformation of the marked stable curve $C_0$, and
  let $\sigma_i \colon S \to \mathcal{C}$, for $i=1, \ldots, n$, be
  the sections corresponding to the markings. Then $S$ is a polydisc
  $\Delta^{3g-3+n}$ and the 
  fibres of $\pi$ are smooth over the open subset $U'=(\Delta^\ast)^E
  \times \Delta^{3g-3-|E|+n}$.  

\smallskip
Assume that we are given another family of sections $\lambda_1,
\ldots, \lambda_n$ disjoint between them and from the
$\sigma_i$. Then, for any two 
collections of external momenta $\underline \ps_1=(\ps_{1, i})$ and
$\underline \ps_2=(\ps_{2, j})$ satisfying the conservation law, we
still have 
$$
\langle \sum_{i=1}^n \ps_{1, i} \sigma_i, \sum_{j=1}^n \mathbf{p}_{2, j} \lambda_j \rangle=2\pi \frac{\phi_G(\underline \p_1^G, 
  \underline \p_2^G,\underline y)}{\psi_G(\underline y)}
+h(s)
$$ for a bounded function $h \colon U' \to \R$. 
\end{remark}

\begin{remark}\label{rem:multisec} By considering a ramified covering
  of $S'$, \'etale over $U'$, the same results holds when the
  $\lambda _{i} $ in Remark \ref{rem:1} are multi-valued sections which do not
  meet the double points of $C_{0}$. 
\end{remark}

\begin{proof}[Proof of Theorem \ref{thm:main}] Let $\underline t
  \colon I \to U$ be an admissible segment, and consider a lift
  $\underline{z} \colon I \to \widetilde{U}$. Then
  $t_e(\alpha')=\exp(2\pi i z_e(\alpha'))$, so 
$$
\alpha' y_e(\alpha')=\alpha' \mathrm{Im}(z_e(\alpha'))=-\frac{1}{2\pi} \log |t_e(\alpha')|^{\alpha'}.
$$ In particular, $\lim_{\alpha' \to 0} \alpha' y_e(\alpha)=\frac{1}{2\pi} Y_e$.  

\smallskip

Using Corollary \ref{cor:1}, together with the fact that the quotient of the Symanzik polynomials is homogeneous of degree one, we get
$$
\alpha' \langle \oA_{\underline t(s)},\oB_{\underline t(s)}\rangle=
    2\pi \frac{\phi_G(\underline \p_1^G, 
  \underline \p_2^G, (\alpha' y_e(\alpha'))_{e \in E})}{\psi_G((\alpha' y_e(\alpha'))_{e \in E})}
+\alpha' h(\underline t(s)).
$$ Since the function $h$ is bounded, passing to the limit yields
\begin{equation*}
\pushQED{\qed} 
\lim_{\alpha' \to 0} \alpha' \langle \oA_{\underline t(s)},\oB_{\underline t(s)}\rangle=
\frac{\phi_G(\underline \p_1^G, 
  \underline \p_2^G, \underline Y)}{\psi_G(\underline Y)}.\qquad \qedhere 
  \end{equation*}
\end{proof}

\section{Convergence of the integrands}
\label{sec:conv-integr}

In this final section, we prove the main result of the paper: the convergence of the integrand in string theory to the integrand of Feynman amplitudes in the low-energy limit $\alpha' \to 0$. For this, we first recall the definition of regularized Green functions. 

\subsection{Green functions} 
\label{sec:green-functions}
Let $C$ be a smooth projective complex curve, together with a smooth
positive $(1, 1)$-form $\mu$. 

\begin{ex} If $C$ has genus $g \geq 1$, a natural choice for $\mu$ is the Arakelov form
$$
\mu_{\Ar}=\frac{i}{2g} \sum_{j=1}^g \omega_i \wedge \overline{\omega_i}, 
$$ where $\omega_1, \ldots, \omega_g$ is any orthonormal basis of the holomorphic differentials $H^0(C, \Omega^1_C)$ for the Hermitian product $(\omega, \omega')=\frac{i}{2} \int_C \omega \wedge \overline{\omega'}$. 
\end{ex}

\smallskip

To $\mu$ one associates a Green function $\green_\mu$ as follows. For a fixed point $x$ of $C$, consider the differential equation
\begin{equation}\label{green-eq}
\partial \overline{\partial} \varphi=\pi i(\delta_x-\mu), 
\end{equation} where $\delta_x$ is the Dirac delta distribution. It
admits a unique solution  
$$
\green_\mu(x, \cdot) \colon C \setminus \{x\} \longrightarrow \R
$$ satisfying the following conditions: 
\begin{itemize}

\item If we choose local coordinates in an analytic chart $U$, then,
  for fixed $x\in U$ there exists a smooth function
  $\alpha$ such that $\green_\mu(x, y)=-\log |y-x|+\alpha(y)$ for any
  $y \in U \setminus \{x\}$.

\smallskip

\item (Normalization) $\int_C \green_\mu(x, y) \mu(y)=0$. 
\end{itemize}

\smallskip

Letting $x$ vary, we can view $\green_\mu$ as a function on $C \times
C \setminus \Delta$. The chosen normalization implies that
$\green_{\mu }$ is symmetric.

\smallskip

The following lemma, proved in \cite[Chap, II, Prop. 1.3]{lang}, explains how the Green function varies when $\mu$ is changed. 

\begin{lem} If $\mu'$ is another positive $(1, 1)$-form on $C$, then there exists a smooth function $f$ on $C$ such that 
\begin{equation}\label{green-dif}
\green_{\mu'}(x, y)=\green_{\mu}(x, y)+f(x)+f(y).
\end{equation}
\end{lem}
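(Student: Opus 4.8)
The plan is to show that, for a fixed first argument, the difference of the two Green functions extends to a \emph{smooth} function on $C$ whose $\partial\overline{\partial}$ no longer depends on the base point, and then to use the symmetry of the Green function recalled above to separate the two variables.

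First I would fix $x\in C$ and set $g(x,y)=\green_{\mu'}(x,y)-\green_\mu(x,y)$ for $y\neq x$. Near $y=x$, the defining local expansions give $\green_\mu(x,y)=-\log|y-x|+\alpha(y)$ and $\green_{\mu'}(x,y)=-\log|y-x|+\alpha'(y)$ with $\alpha,\alpha'$ smooth in a neighbourhood of $x$, so the logarithmic singularities cancel and $g(x,\cdot)$ extends to a smooth function on all of $C$. Subtracting the two instances of \eqref{green-eq} — and noting that integrating that equation over $C$ forces $\int_C\mu=\int_C\mu'=1$, so that the two Dirac terms cancel — yields
\begin{equation*}
\partial\overline{\partial}_y\,g(x,y)=\pi i\,(\mu-\mu'),
\end{equation*}
an identity whose right-hand side no longer depends on $x$.

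Next I would observe that $\mu-\mu'$ is a smooth real $(1,1)$-form of total mass zero on the compact Riemann surface $C$, hence $\partial\overline{\partial}$-exact: there is a smooth real function $\psi$ on $C$, unique up to an additive constant, with $\partial\overline{\partial}\psi=\pi i(\mu-\mu')$, the existence and smoothness following from the $\partial\overline{\partial}$-lemma together with elliptic regularity. Then $g(x,\cdot)-\psi$ is a harmonic function on the compact connected curve $C$, hence constant, so $g(x,y)=\psi(y)+\lambda(x)$ for some function $\lambda$ of $x$ alone. Invoking the symmetry $g(x,y)=g(y,x)$, which follows from the symmetry of both $\green_\mu$ and $\green_{\mu'}$, one obtains $\psi(y)+\lambda(x)=\psi(x)+\lambda(y)$ for all $x,y$, so $\lambda-\psi$ is a constant, say $-c$. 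Therefore $g(x,y)=\psi(x)+\psi(y)-c$, and setting $f=\psi-c/2$, which is smooth, gives exactly \eqref{green-dif}.

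The only genuinely analytic ingredients are the cancellation of the logarithmic singularity in $g(x,\cdot)$ and the smooth solvability of $\partial\overline{\partial}\psi=\pi i(\mu-\mu')$ on $C$; both are standard, so I expect the only (very mild) obstacle to be keeping track of the normalizations of $\mu$ and $\mu'$. If one prefers, the constant $c$ can instead be pinned down a posteriori by imposing the normalization $\int_C\green_{\mu'}(x,\cdot)\,\mu'=0$ and using the corresponding one for $\green_\mu$.
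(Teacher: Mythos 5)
Your argument is correct and complete. Note, however, that the paper does not actually prove this lemma: it simply cites Lang, \emph{Introduction to Arakelov theory}, Chap.~II, Prop.~1.3, so what you have written is a self-contained substitute rather than a parallel of the paper's own reasoning. Your route is the natural potential-theoretic one: the logarithmic singularities cancel, so for fixed $x$ the difference $g(x,\cdot)=\green_{\mu'}(x,\cdot)-\green_{\mu}(x,\cdot)$ is smooth and satisfies $\partial\overline{\partial}_y\,g(x,y)=\pi i(\mu-\mu')$, a right-hand side independent of $x$; solving $\partial\overline{\partial}\psi=\pi i(\mu-\mu')$ (possible because $\mu-\mu'$ has total integral zero, which is exactly the normalization implicit in the solvability of \eqref{green-eq}) and using that a function with vanishing $\partial\overline{\partial}$ on the compact connected curve is constant gives $g(x,y)=\psi(y)+\lambda(x)$, and the symmetry of both Green functions then forces $\lambda=\psi-c$, so $f=\psi-c/2$ works. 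The standard reference proof is instead constructive: one essentially takes $f(x)=\int_C \green_{\mu}(x,y)\,\mu'(y)$ (suitably shifted by a constant) and checks directly, using \eqref{green-eq} and the normalizations, that $\green_{\mu}(x,y)+f(x)+f(y)$ satisfies the defining equation and normalization for $\green_{\mu'}$, then invokes uniqueness. Your version buys conceptual clarity (it isolates exactly where solvability of the $\partial\overline{\partial}$-equation and the symmetry are used), at the cost of invoking Hodge-theoretic solvability; the explicit construction avoids that input but hides the separation of variables inside a computation. Either way the statement is established, and your handling of the constant via symmetry is exactly right.
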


The archimedean height pairing between $\R^D$-valued divisors can be expressed in terms of Green functions as follows: 

\begin{lem}\label{lem:h-g} Let $\mathfrak A =\sum \p_{i,1} \sigma_{i,1}$ and $\mathfrak B = \sum \p_{j,2}\sigma_{j,2}$ be $\R^D$-valued degree zero divisors with disjoint support on $C$. Then 
\begin{equation}\label{height-green}
\langle \mathfrak A, \mathfrak B\rangle = \, \sum_{i,j} \langle \p_{i,1}, \p_{j,2} \rangle \green_{\mu}(\sigma_{i,1},\sigma_{j,2})
\end{equation} for any positive $(1, 1)$-form $\mu$ on $C$. In particular, the right hand side of \eqref{height-green} is independent of $\mu$. 
\end{lem}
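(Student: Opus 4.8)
The plan is to reduce to real-valued divisors, to identify the normalized logarithmic differential $\omega_{\mathfrak A,\R}$ with twice the $\partial$-part of a sum of Green functions, and then to integrate by Stokes' theorem. Since the archimedean height pairing on $\R^{D}$-valued divisors is obtained from the pairing on real-valued divisors by coupling with the Minkowski form, and the right-hand side of \eqref{height-green} is the same coupling applied to Green-function values, it is enough to prove that
$$\langle \mathfrak A,\mathfrak B\rangle=\sum_{i,j}n_{i}m_{j}\,\green_{\mu}(P_{i},Q_{j})$$
for real-valued degree-zero divisors $\mathfrak A=\sum_{i}n_{i}P_{i}$ and $\mathfrak B=\sum_{j}m_{j}Q_{j}$ on $C$ with $|\mathfrak A|\cap|\mathfrak B|=\emptyset$.

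The key object is the real-valued function $g_{\mathfrak A}(y)\coloneqq\sum_{i}n_{i}\,\green_{\mu}(P_{i},y)$ on $C\setminus|\mathfrak A|$. From \eqref{green-eq} and the relation $\sum_{i}n_{i}=0$ one obtains $\partial\overline{\partial}g_{\mathfrak A}=\pi i\sum_{i}n_{i}\delta_{P_{i}}$; in particular $\overline{\partial}(\partial g_{\mathfrak A})=0$ on $C\setminus|\mathfrak A|$, so $\partial g_{\mathfrak A}$ is a holomorphic $1$-form there. Near $P_{i}$ one has $g_{\mathfrak A}=-n_{i}\log|y-P_{i}|+(\text{smooth})$, and hence $2\,\partial g_{\mathfrak A}$ extends to a meromorphic $1$-form on $C$ with at most simple poles whose residue, in the negatively oriented residue convention used in \eqref{poin-res}, equals $\mathfrak A$. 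Moreover, since $g_{\mathfrak A}$ is real-valued, for any real cycle $\gamma$ in $C\setminus|\mathfrak A|$ we have $0=\int_{\gamma}dg_{\mathfrak A}=\int_{\gamma}\partial g_{\mathfrak A}+\overline{\int_{\gamma}\partial g_{\mathfrak A}}$, so the periods $\int_{\gamma}2\,\partial g_{\mathfrak A}$ lie in $\R(1)$. By the characterization of $\omega_{\mathfrak A,\R}$ recalled in Remark~\ref{omega-uniq}, this forces $\omega_{\mathfrak A,\R}=2\,\partial g_{\mathfrak A}$. (As a check: when $\mathfrak A=\mathrm{div}(f)$, Poincar\'e--Lelong shows that $g_{\mathfrak A}+\log|f|$ is harmonic on $C$, hence constant, so $2\,\partial g_{\mathfrak A}=-df/f$, in agreement with Example~\ref{ex:heigh-div}.)

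It then remains to integrate. Choosing a real-valued $1$-chain $\gamma_{\mathfrak B}$ supported on $C\setminus|\mathfrak A|$ with $\partial\gamma_{\mathfrak B}=\mathfrak B$ (possible since the supports are disjoint) and using once more that both $g_{\mathfrak A}$ and $\gamma_{\mathfrak B}$ are real, we get
\begin{multline*}
\langle \mathfrak A,\mathfrak B\rangle=\mathrm{Re}\Big(\int_{\gamma_{\mathfrak B}}2\,\partial g_{\mathfrak A}\Big)=\int_{\gamma_{\mathfrak B}}\big(\partial g_{\mathfrak A}+\overline{\partial}g_{\mathfrak A}\big)\\
=\int_{\gamma_{\mathfrak B}}dg_{\mathfrak A}=g_{\mathfrak A}(\partial\gamma_{\mathfrak B})=\sum_{i,j}n_{i}m_{j}\,\green_{\mu}(P_{i},Q_{j}),
\end{multline*}
where we have used Stokes' theorem together with the smoothness of $g_{\mathfrak A}$ on a neighbourhood of $\gamma_{\mathfrak B}$. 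Coupling with the Minkowski form yields \eqref{height-green}.

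Finally, the $\mu$-independence of the right-hand side is immediate from \eqref{green-dif}: replacing $\green_{\mu}$ by $\green_{\mu'}$ changes the sum by $\sum_{i,j}\langle\p_{i,1},\p_{j,2}\rangle\big(f(\sigma_{i,1})+f(\sigma_{j,2})\big)$, and each of the two resulting terms vanishes because $\sum_{i}\p_{i,1}=0$ and $\sum_{j}\p_{j,2}=0$ (it is of course also evident since the left-hand side of \eqref{height-green} does not involve $\mu$). The one delicate point in the argument is the identification $\omega_{\mathfrak A,\R}=2\,\partial g_{\mathfrak A}$ — that is, matching the residue normalization and the sign conventions of \eqref{poin-res} and \eqref{height-def} — everything else being elementary.
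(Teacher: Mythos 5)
Your proposal is correct and follows essentially the same route as the paper: it identifies $\omega_{\mathfrak A,\R}=2\,\partial g_{\mathfrak A}$ via the residue and real-period characterization of Remark~\ref{omega-uniq} and then integrates $dg_{\mathfrak A}$ along $\gamma_{\mathfrak B}$, exactly as in the paper's argument. The only (immaterial) difference is that the paper first reduces by bilinearity to divisors of the form $x_1-x_2$, $y_1-y_2$, whereas you handle general real-valued degree-zero divisors directly.
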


\begin{proof} By bilinearity, it suffices to prove the result for divisors of the form $\mathfrak A=x_1-x_2$ and $\mathfrak B=y_1-y_2$. For this, consider the function 
$$
\green_{\mathfrak A, \mu}(\cdot)=\green_\mu(x_1, \cdot)-\green_\mu(x_2, \cdot). 
$$ 

\smallskip

We claim that $\omega_{\mathfrak A}=2\partial \green_{\mathfrak A, \mu}$. By Remark \ref{omega-uniq}, this amounts to say that $2\partial \green_{\mathfrak A, \mu}$ has residue $1$ at $x_1$ and $-1$ at $x_2$, and $\int_{\gamma } \partial \green_{\mathfrak A, \mu} \in \R(1)$ for any real-valued cycle $\gamma$ on $C \setminus |\mathfrak A|$. The first property follows from the local expression of $\green_\mu(x_1, \cdot)$ and $\green_\mu(x_2, \cdot)$ around the points $x_1$ and $x_2$, and the second one uses the fact that $\overline{\partial} \green_{\mathfrak A, \mu}=\overline{\partial \green_{\mathfrak A, \mu}}$ since $\green_{\mathfrak A, \mu}$ is a real function. Therefore, 
\begin{align*}
\langle \mathfrak A, \mathfrak B\rangle&=\mathrm{Re} \left(\int_{\gamma_{\mathfrak B}}\omega_{\mathfrak A}\right)=\mathrm{Re} \left(\int_{\gamma_{\mathfrak B}} \partial \green_{\mathfrak A, \mu} +\overline{\partial} \green_{\mathfrak A, \mu} \right)=\mathrm{Re}\left(\int_{\gamma_{\mathfrak B}} d\green_{\mathfrak A, \mu}\right) \\
&=\green_{\mu}(x_1, y_1)-\green_{\mu}(x_1, y_2)-\green_{\mu}(x_2, y_1)+\green_\mu(x_2, y_2),  
\end{align*} as we wanted to show.  
\end{proof}

To prove the convergence of the integrands, we need to extend the definition of the height pairing to divisors with non-disjoint supports. For this we introduce the \textit{regularized Green function} $\green_{\mu}' \colon C \times C \to \R$, which agrees with $\green_\mu$ outside the diagonal, and is defined on $\Delta$ by 
$$
\mathfrak{g}'_{\mu}(x, x)=\lim_{x' \to x} \big(\green_\mu(x', x)+\log d_{\mu }(x',x)\big), 
$$ where $x'$ is a holomorphic coordinate in a small neighborhood of
$x$ and $d_{\mu}$ denotes the distance function associated to the
metric $\mu$.

\smallskip

Replacing the Green function by its regularization in \eqref{height-green}, we can extend the definition of the height pairing to arbitrary $\R^D$-valued divisors and, in particular, define
$$
\langle \mathfrak A, \mathfrak A \rangle'_{\mu}=\sum_{1\le i,j\le n}
\langle \ps_i, \ps_j \rangle \green_{\mu}'(\sigma_i, \sigma_j).
$$

\smallskip

Without further assumptions, the real number $\langle \mathfrak A, \mathfrak A \rangle_\mu'$ depends on the choice of $\mu$. However, we have the following straightforward consequence of Lemma \ref{lem:h-g}: 

\begin{cor} Assume that the external momenta $\ps_i \in \R^D$ satisfy the conservation
  law $\sum_{i=1}^n \ps_i=0$ and the on shell condition
  $\langle \ps_i, \ps_i \rangle=0$ for all $i$. Then $\langle \mathfrak A,
  \mathfrak A \rangle'_\mu$ is independent of the choice of $\mu$.  
\end{cor}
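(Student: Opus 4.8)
The plan is to combine the change-of-metric formula for ordinary Green functions recalled above (from \cite{lang}) with a short analysis of the extra term that the regularization introduces along the diagonal, and then to watch both contributions collapse once the conservation law and the on-shell condition are imposed.

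First I would fix two smooth positive $(1,1)$-forms $\mu,\mu'$ on $C$ and invoke the lemma of \cite{lang}: there is a smooth $f\colon C\to\R$ with $\green_{\mu'}(x,y)=\green_{\mu}(x,y)+f(x)+f(y)$ for $x\neq y$. Next I would determine how the regularized diagonal value transforms. Substituting this identity into the definition $\green'_{\mu}(x,x)=\lim_{x'\to x}\bigl(\green_{\mu}(x',x)+\log d_{\mu}(x',x)\bigr)$ and observing that, in a holomorphic chart, the ratio $d_{\mu'}(x',x)/d_{\mu}(x',x)$ tends as $x'\to x$ to the square root of the smooth positive function $\mu'/\mu$ evaluated at $x$, one obtains $\green'_{\mu'}(x,x)=\green'_{\mu}(x,x)+2f(x)+c(x)$ with $c=\frac12\log(\mu'/\mu)$ a smooth function on $C$. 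Thus the difference $g:=\green'_{\mu'}-\green'_{\mu}$ on $C\times C$ is $g(x,y)=f(x)+f(y)$ off the diagonal and $g(x,x)=2f(x)+c(x)$ on it.

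Then, writing $\oA=\sum_i\ps_i\sigma_i$ and taking the $\sigma_i$ pairwise distinct (as they are for the marked points of a string amplitude), I would expand
\begin{multline*}
\langle\oA,\oA\rangle'_{\mu'}-\langle\oA,\oA\rangle'_{\mu}=\sum_{i,j}\langle\ps_i,\ps_j\rangle\,g(\sigma_i,\sigma_j)\\
=\sum_{i,j}\langle\ps_i,\ps_j\rangle\bigl(f(\sigma_i)+f(\sigma_j)\bigr)+\sum_{i}\langle\ps_i,\ps_i\rangle\,c(\sigma_i).
\end{multline*}
The first sum equals $2\sum_i f(\sigma_i)\,\langle\ps_i,\sum_j\ps_j\rangle$ and vanishes by the conservation law, while the second vanishes termwise by the on-shell condition $\langle\ps_i,\ps_i\rangle=0$. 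This yields $\langle\oA,\oA\rangle'_{\mu'}=\langle\oA,\oA\rangle'_{\mu}$.

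The only ingredient that is not pure bookkeeping is the transformation law for $\green'_{\mu}(x,x)$ — essentially checking that $c(x)=\lim_{x'\to x}\log\bigl(d_{\mu'}(x',x)/d_{\mu}(x',x)\bigr)$ exists and depends smoothly on $x$, which is an elementary local computation; I regard this as the (mild) main obstacle. In fact it can be bypassed altogether: when the $\sigma_i$ are distinct, the on-shell condition already annihilates the diagonal terms $i=j$, so $\langle\oA,\oA\rangle'_{\mu}=\sum_{i\neq j}\langle\ps_i,\ps_j\rangle\green_{\mu}(\sigma_i,\sigma_j)$, whose independence of $\mu$ follows immediately from $\green_{\mu'}(x,y)-\green_{\mu}(x,y)=f(x)+f(y)$ together with the same short identity using conservation and on-shellness. (If one wishes to allow sections to collide, the same computation still gives $\mu$-independence provided the total momentum at each collision point is light-like.)
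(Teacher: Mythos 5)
Your proposal is correct and, in its ``bypass'' form, is essentially the paper's own proof: the on-shell condition $\langle\ps_i,\ps_i\rangle=0$ discards the diagonal terms, and the change-of-metric formula $\green_{\mu'}(x,y)=\green_{\mu}(x,y)+f(x)+f(y)$ combined with the conservation law (and on-shellness) annihilates the off-diagonal difference, which is what the paper packages via Lemma 6.3. Your additional computation of how the regularized diagonal value transforms (the term $c=\tfrac12\log(\mu'/\mu)$) is a correct elementary check but, as you yourself observe, unnecessary under the on-shell hypothesis.
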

\begin{proof}
  Let $\mu $ and $\mu '$ be two different metrics. Then, using the on
  shell condition, the conservation law and Lemma \ref{lem:h-g}
  \begin{align*}
    \langle \mathfrak A, \mathfrak A \rangle'_\mu&=
    \sum_{1\le i,j\le n} \langle \ps_i, \ps_j \rangle
    \green_{\mu}'(\sigma_i, \sigma_j)
    =
    2\sum_{1\le i<j\le n} \langle \ps_i, \ps_j \rangle
    \green_{\mu}'(\sigma_i, \sigma_j)
    \\&=
    2\sum_{1\le i<j\le n} \langle \ps_i, \ps_j \rangle
    \green_{\mu'}'(\sigma_i, \sigma_j)
    =\langle \mathfrak A, \mathfrak A \rangle'_{\mu'}. \qedhere
  \end{align*}
\end{proof}

\subsection{Asymptotic of the regularized height pairing} Let $\pi \colon \mathcal{C}' \to S'$ be the analytic versal deformation of the stable marked curve $C_0$ over a polydisc $S'=\Delta^{3g-3+n}$, and let $U'=(\Delta^\ast)^{E} \times \Delta^{3g-3-|E|+n} \subset S'$ denote the smooth locus. Consider the $\R^D$-valued relative divisor 
$$
\mathfrak A=\sum_{i=1}^n \ps_i \sigma_i.
$$ 

Given a smooth $(1, 1)$-form $\mu$ on $\pi^{-1}(U')$ such that every restriction $\mu_s=\mu_{| C_s}$ is positive, we get a function $U' \to \R$ by 
$$
\langle \mathfrak A_s, \mathfrak A_s \rangle'_\mu=\sum_{1 \leq i, j \leq n} \langle \ps_i, \ps_j \rangle \green_{\mu_s}'(\sigma_i(s), \sigma_j(s)). 
$$

To study the asymptotic of $\langle \mathfrak A_s, \mathfrak A_s \rangle_\mu'$ as $s$ approaches the boundary, we introduce the following function: 
$$
h_{\ps, \mu}(s)=\langle \mathfrak A_s, \mathfrak A_s \rangle'_\mu-2\pi \frac{\phi_G(\underline \ps^G, \underline y)}{\psi_G(\underline y)}. 
$$

\begin{thm}\label{thm:8} If $\mu$ extends to a continuous $(1, 1)$-form on $\mathcal{C'}$, then the function $h_{\ps, \mu}$ is bounded. 
\end{thm}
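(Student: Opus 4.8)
The plan is to compare the regularized pairing $\langle\oA_{s},\oA_{s}\rangle'_{\mu}$ with an \emph{honest} height pairing obtained by pushing one copy of the divisor slightly off itself, and then to feed the latter into the already established asymptotic estimate. Choose an analytic family of sections $\sigma_{i}^{u}$, $u$ in a small disc, with $\sigma_{i}^{0}=\sigma_{i}$ and, for $u\neq 0$, with $\sigma_{i}^{u}$ disjoint from every $\sigma_{j}$; we may realize it in a holomorphic chart $W_{i}\cong\Delta\times S'$ around $\sigma_{i}(0)$ --- a point of the smooth locus of $C_{0}$, away from the nodes, where $\mu$ is continuous --- as the section $\{z_{i}=u\}$, where $z_{i}$ is the disc coordinate and $\sigma_{i}=\{z_{i}=0\}$. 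After shrinking $S'$, each $\overline{W_{i}}$ is compact and disjoint from the nodes and from the other sections, so $\mu|_{W_{i}\cap C_{s}}=\rho_{i}\,\tfrac{i}{2}dz_{i}\wedge d\bar z_{i}$ with $\rho_{i}$ continuous and bounded away from $0$ and $\infty$; moreover each $\sigma_{i}^{u_{0}}$ meets the same component of $C_{0}$ as $\sigma_{i}$, so the restriction of the momenta to $G$ is unchanged. Fixing $u_{0}\neq 0$ small and writing $\oA^{u_{0}}_{s}=\sum_{i}\ps_{i}\sigma_{i}^{u_{0}}(s)$, Remark~\ref{rem:1} (applied to the two collections $\{\sigma_{i}\}$, $\{\sigma_{i}^{u_{0}}\}$ with common momenta $\underline\ps$), together with $\phi_G(\underline\ps^{G},\underline\ps^{G},\underline y)=\phi_G(\underline\ps^{G},\underline y)$, gives
\[
\langle\oA_{s},\oA^{u_{0}}_{s}\rangle=2\pi\,\frac{\phi_G(\underline\ps^{G},\underline y)}{\psi_G(\underline y)}+h^{u_{0}}(s),\qquad h^{u_{0}}\text{ bounded on }U'.
\]

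It therefore suffices to prove that $R(s):=\langle\oA_{s},\oA_{s}\rangle'_{\mu}-\langle\oA_{s},\oA^{u_{0}}_{s}\rangle$ is bounded on $U'$, since then $h_{\ps,\mu}=R+h^{u_{0}}$. By Lemma~\ref{lem:h-g} and the fact that $\green'_{\mu_{s}}$ agrees with $\green_{\mu_{s}}$ off the diagonal,
\[
R(s)=\sum_{i}\langle\ps_{i},\ps_{i}\rangle\bigl(\green'_{\mu_{s}}(\sigma_{i}(s),\sigma_{i}(s))-\green_{\mu_{s}}(\sigma_{i}(s),\sigma_{i}^{u_{0}}(s))\bigr)+\sum_{i\neq j}\langle\ps_{i},\ps_{j}\rangle\bigl(\green_{\mu_{s}}(\sigma_{i}(s),\sigma_{j}(s))-\green_{\mu_{s}}(\sigma_{i}(s),\sigma_{j}^{u_{0}}(s))\bigr).
\]
Using the local property of the Green function, on $W_{i}\cap C_{s}$ we write $\green_{\mu_{s}}(\sigma_{i}(s),y)=-\log|z_{i}(y)|+\alpha_{i,s}(y)$ with $\alpha_{i,s}$ smooth; since $d_{\mu_{s}}(\sigma_{i}(s),y)=\sqrt{\rho_{i}(\sigma_{i}(s))}\,|z_{i}(y)|\,(1+o(1))$ as $y\to\sigma_{i}(s)$, we get $\green'_{\mu_{s}}(\sigma_{i}(s),\sigma_{i}(s))=\alpha_{i,s}(\sigma_{i}(s))+\tfrac12\log\rho_{i}(\sigma_{i}(s))$, and the $i$-th diagonal summand equals $\log|u_{0}|+\tfrac12\log\rho_{i}(\sigma_{i}(s))+\bigl(\alpha_{i,s}(\sigma_{i}(s))-\alpha_{i,s}(\sigma_{i}^{u_{0}}(s))\bigr)$. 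Here the first term is constant and the second is bounded because $\mu$ is continuous and the $\sigma_{i}(s)$ stay in the fixed compact $\overline{W_{i}}$; the off-diagonal summand is a difference of two values of the function $\green_{\mu_{s}}(\sigma_{i}(s),\cdot)$ at the points $\sigma_{j}(s),\sigma_{j}^{u_{0}}(s)$ of the fixed compact $\overline{W_{j}}\cap C_{s}$. Thus $R$ is bounded once we know the following uniform estimate: as $s$ ranges over $U'$, the oscillation over $\overline{W_{j}}\cap C_{s}$ of $\green_{\mu_{s}}(\sigma_{i}(s),\cdot)$ (for $j\neq i$) and the oscillation over $\overline{W_{i}}\cap C_{s}$ of the regular part $\alpha_{i,s}$ remain bounded.

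This uniform oscillation estimate is the heart of the matter, and it is where the continuity of $\mu$ on $\mathcal{C}'$ is used essentially. Away from the relevant singular point, the functions in question satisfy $\partial\bar\partial(\cdot)=-\pi i\,\mu_{s}$; since $\mu$ is continuous and $\overline{W_{j}}$ is compact and disjoint from the nodes, $\mu_{s}$ is uniformly controlled there, so after subtracting a uniformly $C^{2}$-bounded particular solution the functions become harmonic, and interior gradient estimates bound their local oscillation in terms of their oscillation over $\overline{W_{j}}$. The latter is bounded as $s\to 0$ because the only unbounded contribution to the degenerating Green function is an additive ``polyhedral'' term (essentially the Green function of the dual graph, the quantity $\sim\phi_G/\psi_G$ of Corollary~\ref{cor:1}) which is \emph{constant} on each component, hence contributes nothing to the oscillation over a disc contained in one component and disjoint from the nodes; this is the Green-function refinement of Corollary~\ref{cor:1} and follows from the local analysis of degenerating Green functions along the lines of \cite{HJ}, combined with the combinatorial description of the degeneration in Section~\ref{riemannsurf}. (Alternatively one may run the same argument with the metric regularization replaced by a regularization ``through a rational function'': choose, for each $i$, a rational function $f_{i}$ on $\mathcal{C}'$ --- on a finite cover of $S'$ if necessary --- with a simple zero along $\sigma_{i}$ and all other zeros and poles disjoint from the $\sigma_{j}$ and from the nodes, and replace $\green'_{\mu_{s}}(\sigma_{i}(s),\sigma_{i}(s))$ by $\lim_{x'\to\sigma_{i}(s)}\bigl(\green_{\mu_{s}}(x',\sigma_{i}(s))+\log|f_{i}(x')|\bigr)$; the two regularizations differ by $\log|f_{i}'(\sigma_{i}(s))|-\tfrac12\log\rho_{i}(\sigma_{i}(s))$ up to bounded error, hence by a bounded amount, which again reduces boundedness of $h_{\ps,\mu}$ to control of off-diagonal Green functions.) The principal obstacle is precisely this last input: the uniform regularity, modulo locally constant functions, of the degenerating Green functions on compact sets away from the nodes.
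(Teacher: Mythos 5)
Your reduction is set up correctly: Remark~\ref{rem:1} does apply to the fixed pushed-off family $\{\sigma_i^{u_0}\}$ (disjoint from the $\sigma_j$ and meeting the same components of $C_0$, so $\underline\ps^G$ is unchanged and $\phi_G(\underline\ps^G,\underline\ps^G,\underline y)=\phi_G(\underline\ps^G,\underline y)$), so everything comes down to the boundedness on $U'$ of $R(s)=\langle\oA_s,\oA_s\rangle'_{\mu}-\langle\oA_s,\oA_s^{u_0}\rangle$. But at exactly this point the argument has a genuine gap, which you yourself flag as ``the principal obstacle'': the boundedness of $R$ is reduced to a \emph{uniform oscillation estimate} for the degenerating Green functions $\green_{\mu_s}(\sigma_i(s),\cdot)$ (and of their regular parts $\alpha_{i,s}$) on fixed compact sets away from the nodes, uniformly as $s\to 0$ in the multi-parameter family, and this estimate is neither proved nor available from the paper's results. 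Corollary~\ref{cor:1} and Remark~\ref{rem:1} control height pairings of \emph{fixed} (multi-)sections, with a bounded error term depending on those sections, so they do not give uniform control of a Green function over a whole disc of points; and the statement you invoke ``along the lines of \cite{HJ}'' (that the divergent part of $\green_{\mu_s}$ is locally constant on components away from the nodes) is a substantive analytic result, established in the literature only for one-dimensional bases, and of essentially the same depth as the theorem you are proving. The sketch via harmonic estimates does not close this: subtracting a particular solution of $\partial\bar\partial(\cdot)=-\pi i\mu_s$ controls second derivatives locally, but the oscillation over $\overline{W_j}\cap C_s$ still has to be bounded by some global input, which is missing.

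The paper's proof closes precisely this hole by a different comparison that never requires estimating $\green_{\mu_s}$ at all. Instead of pairing $\oA$ against a fixed push-off $\oA^{u_0}$, one uses the moving lemma to choose a rational function $f$ on $\sC'$ with $\oA+\mathrm{div}(f)$ disjoint from $\oA$ and from the nodes, applies Corollary~\ref{cor:1} together with Remarks~\ref{rem:1} and \ref{rem:multisec} to the disjoint pair $\bigl(\oA,\oA+\mathrm{div}(f)\bigr)$, and then lets the push-off parameter $u\to 0$: by Example~\ref{ex:heigh-div} the divergence of $\langle\oA^u,\oA\rangle$ is exhibited explicitly as $\log|f(\oA^u)|=-\sum_i \p_i^2\log|u|+\text{bounded}$, which cancels the metric regularization term $\sum_i \p_i^2\log d_{\mu}(\sigma_i^u,\sigma_i)=\sum_i \p_i^2\log|u|+\text{bounded}$, the last identity being exactly where the continuity of $\mu$ on $\sC'$ enters (via $d_\mu(\sigma_i^u,\sigma_i)=\eta_\mu|u|$ with $\eta_\mu$ continuous). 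Only local computations near the sections and the already-proven disjoint-support asymptotic are used. Your parenthetical alternative (regularizing ``through a rational function'') is in fact the germ of this argument: if you develop it --- replacing the fixed target $\oA^{u_0}$ by $\oA+\mathrm{div}(f)$ and taking the limit $u\to 0$ --- the problematic oscillation estimate disappears and the proof goes through.
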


\begin{proof} By bilinearity, it suffices to prove that the function is bounded for integer-valued divisors
$$
\mathfrak A=\sum_{i=1}^n \p_i \sigma_i,\quad \p_{i}\in \Z,\quad \sum_{i=1}^n \p_{i}=0. 
$$ 

Let $\Sigma \subset C_0$ denote the union of the set of singular points of $C_0$ and the marked points $\sigma_1(0), \ldots, \sigma_n(0)$. Using the moving lemma, one can find a rational function $f$ on $\mathcal{C}$ such that $\mathfrak A+\mathrm{div}(f)$ does not meet $\Sigma$. Possibly after shrinking $\Delta$, we may assume that the divisor $\mathfrak A +\mathrm{div}(f)$ has support disjoint from $\mathfrak A$ and no vertical components. 

\smallskip

Since $\mathrm{div}(f)$ does not meet the double points of $C_0$ and
$\mathrm{div}(f)_{| X_v}$ has degree zero for each $v \in V$, the
restrictions to $G$ of the momenta of the divisors $\mathfrak A$ and
$\mathfrak A + \mathrm{div}(f)$ coincide. Let $\underline \ps^G$
denote their common value. Then Corollary \ref{cor:1} and remarks
\ref{rem:1} and \ref{rem:multisec} yield
\begin{equation}\label{eq:a+div}
  \langle \mathfrak{A}, \mathfrak{A}+\mathrm{div}(f) \rangle=2\pi
  \frac{\phi_G(\underline \ps^G, \underline y)}{\psi_G(\underline y)}
  + h_1(s),
\end{equation} 
where $\underline y=(y_e)_{e \in E}$ and $h_1 \colon U'\to \R$ is
a bounded function.

\smallskip

Let $\pi_i$ be a local equation of the divisor $\sigma_i\subset \sC'$
around the point 
$\sigma_i(0)$, and consider the first order deformation $\sigma_i^u$
given by $\{\pi_i=u\}$ for $u$ in a small disc. Then the relative
divisor
$$
\mathfrak A^u=\sum_{i=1}^n \p_i \sigma_i^u
$$ 
coincides with $\mathfrak A$ for $u=0$ and is disjoint both from
$\mathfrak A$ and $\mathfrak A+\mathrm{div}(f)$ for $u \neq 0$
sufficiently small. Moreover,
$$
\langle \mathfrak A, \mathfrak A \rangle'_\mu=\lim_{u \to 0} \left( \langle \mathfrak A^u, \mathfrak A \rangle-\sum_{i=1}^n \p_i^2\log d_{\mu }(\sigma_i^u,\sigma_i )  \right). 
$$

\smallskip

By Example \ref{ex:heigh-div}, this can be rewritten as 
\begin{align*}
\langle \mathfrak A, \mathfrak A\rangle_\mu'=\lim_{u \to 0}
  \bigl(\langle \mathfrak A^u, \mathfrak A +\mathrm{div}(f) \rangle
  -\log |f(\mathfrak A^u)|-\sum_{i=1}^n \p_i \log d_{\mu
  }(\sigma_i^u,\sigma_i )\bigr).
\end{align*} 

\smallskip

Note that, since $\mathfrak A + \mathrm{div}(f)$ is disjoint from
$\mathfrak A$, the function $f$ is, locally around the point
$\sigma_i(0)$, of the form $f=\pi_i^{-p_i} v_i$ with $v_i$ invertible,
hence
$$ 
\log |f(\mathfrak A^u)|=-\sum \p_i^2 \log |u| + \p_i\log v_i (\sigma_i^u). 
$$ 
On the other hand, since the metric $\mu$ is continuous, there exists
a continuous function $\eta_\mu$ such that $d_{\mu
}(\sigma_i^u,\sigma_i )=\eta_\mu |u|$. It follows that the function  
$$
h_2(s)=\lim_{u \to 0} \bigl(\log | f(\mathfrak A^u_s)|+\sum_{i=1}^n
\p_i^2 \log d_{\mu }(\sigma_i^u,\sigma_i )\bigr)
$$
is bounded. Combining this with equation \eqref{eq:a+div}, we get
$h_{\ps, \mu}=h_1-h_2$, so it is a bounded function.
\end{proof}

\begin{cor}\label{cor55} Assume that the external momenta satisfy
  $\sum_{i=1}^n \ps_i=0$ and $\langle \ps_i, \ps_i \rangle=0$ for all
  $i$. Then the function $\green_{\ps, \mu}$ is independent of
  $\mu$. In particular, when $g \geq 1$ and $\mu=\mu_{\Ar}$, the
  following holds
\begin{equation}\label{eq: bound-mu}
\langle \mathfrak A, \mathfrak A \rangle'_{\mu_{Ar}}=2\pi \frac{\phi_G(\ps^G,
  \underline y)}{\psi_G(\underline y)}+\text{bounded}.
\end{equation}
\end{cor}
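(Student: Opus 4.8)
The plan is to deduce Corollary \ref{cor55} directly by combining the
two preceding results with the ``on shell'' cancellation phenomenon
already observed in the first Corollary of this subsection. First I
would note that the statement has two parts: (i) under the stated
conditions on the momenta, the function $h_{\ps,\mu}$ does not depend
on $\mu$; and (ii) specializing to $\mu=\mu_{\Ar}$ on a family of
curves of genus $g\geq 1$ yields the asymptotic formula
\eqref{eq: bound-mu}. Part (ii) is immediate from (i) together with
Theorem \ref{thm:8}, so the real content is (i).

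For part (i), the key observation is that the regularized
self-pairing $\langle \oA_s,\oA_s\rangle'_\mu$ is built from the
regularized Green function $\green'_{\mu_s}$ exactly as in the
definition preceding Theorem \ref{thm:8}, and that the dependence of
$\green'_{\mu_s}$ on the chosen metric is governed fibrewise by the
Lemma of Lang recalled above: if $\mu'_s$ is another positive
$(1,1)$-form on $C_s$, there is a smooth function $f_s$ on $C_s$ with
$\green_{\mu'_s}(x,y)=\green_{\mu_s}(x,y)+f_s(x)+f_s(y)$, and this
identity passes to the regularized Green functions up to the change in
the metric distance term, which also only contributes a
$f_s(x)+f_s(y)$-type correction on the diagonal. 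Hence
\begin{displaymath}
  \langle \oA_s,\oA_s\rangle'_{\mu'}-\langle \oA_s,\oA_s\rangle'_{\mu}
  = \sum_{1\le i,j\le n}\langle \ps_i,\ps_j\rangle
  \bigl(f_s(\sigma_i(s))+f_s(\sigma_j(s))\bigr).
\end{displaymath}
Now I would expand the right-hand side: it equals
$2\bigl(\sum_i \langle \ps_i,\sum_j\ps_j\rangle f_s(\sigma_i(s))\bigr)$,
which vanishes because the conservation law $\sum_j\ps_j=0$ forces
each inner factor $\langle \ps_i,\sum_j\ps_j\rangle$ to be zero. (This
is precisely the argument of the earlier Corollary, now applied
fibrewise and with the additional ``on shell'' condition only needed
to handle the diagonal terms coherently with the regularization.)
Therefore $\langle \oA_s,\oA_s\rangle'_{\mu'}=\langle
\oA_s,\oA_s\rangle'_{\mu}$ for all $s\in U'$, so subtracting the same
Symanzik quantity $2\pi\phi_G(\ps^G,\underline y)/\psi_G(\underline y)$
from both shows $h_{\ps,\mu'}=h_{\ps,\mu}$, proving (i).

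Finally, for part (ii), since $g\geq 1$ the Arakelov form $\mu_{\Ar}$
is a well-defined positive $(1,1)$-form on each fibre $C_s$ over $U'$,
and being the fibrewise average of smooth globally defined $1$-forms,
it is in particular smooth on $\pi^{-1}(U')$. It does not, however,
extend continuously to the total space $\sC'$ across the boundary
strata, so Theorem \ref{thm:8} does not apply to $\mu_{\Ar}$
directly. Instead I would choose any auxiliary $(1,1)$-form $\mu_0$
that \emph{does} extend continuously to $\sC'$ and whose fibrewise
restrictions are positive (for instance one pulled back from a
projective embedding of a semistable model, after shrinking $S'$);
Theorem \ref{thm:8} gives that $h_{\ps,\mu_0}$ is bounded, and part
(i) gives $h_{\ps,\mu_{\Ar}}=h_{\ps,\mu_0}$, hence $h_{\ps,\mu_{\Ar}}$
is bounded, which is exactly \eqref{eq: bound-mu}. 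The one point that
requires care --- and which I expect to be the main obstacle --- is
justifying that the metric-comparison identity survives the
regularization on the diagonal: one must check that the correction
term coming from replacing $d_\mu$ by $d_{\mu'}$ in the definition of
$\green'$ is again of the form $f_s(x)+f_s(x)$ at a diagonal point
$x$, so that the vanishing argument above still applies to the
self-intersection terms $\langle \ps_i,\ps_i\rangle
\green'_{\mu_s}(\sigma_i(s),\sigma_i(s))$; this is where the ``on
shell'' hypothesis $\langle \ps_i,\ps_i\rangle=0$ is genuinely used,
since without it those diagonal corrections need not cancel.
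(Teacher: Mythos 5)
Your proposal is correct and follows essentially the route the paper intends: the corollary is an immediate consequence of the unnumbered corollary in Section~\ref{sec:green-functions} (independence of $\langle \oA,\oA\rangle'_{\mu}$ from $\mu$ under the conservation law and the on-shell condition, applied fibrewise) together with Theorem~\ref{thm:8} applied to an auxiliary positive $(1,1)$-form that extends continuously to $\sC'$, exactly as in your part~(ii), and you are right that $\mu_{\Ar}$ itself need not extend continuously, which is why the transfer is needed. One correction to part~(i): the regularized Green function on the diagonal does \emph{not} transform by $f_s(x)+f_s(x)$. Writing $\mu_s=\rho\,\tfrac{i}{2}dz\wedge d\bar z$ and $\mu'_s=\rho'\,\tfrac{i}{2}dz\wedge d\bar z$ in a local coordinate, one has
\begin{displaymath}
  \green'_{\mu'_s}(x,x)=\green'_{\mu_s}(x,x)+2f_s(x)+\tfrac12\log\bigl(\rho'(x)/\rho(x)\bigr),
\end{displaymath}
and the last term is not expressible through $f_s$ (which solves $\partial\bar\partial f_s=\pi i(\mu_s-\mu'_s)$ up to normalization), so the check you hoped for would fail. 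This does not damage the argument, because those diagonal terms carry the coefficient $\langle \p_i,\p_i\rangle=0$; the cleaner bookkeeping, which is the one in the paper's earlier corollary, is to discard the diagonal terms at the outset using the on-shell condition and then note that the off-diagonal correction
$\sum_{i\neq j}\langle \p_i,\p_j\rangle\bigl(f_s(\sigma_i)+f_s(\sigma_j)\bigr)=-2\sum_i\langle \p_i,\p_i\rangle f_s(\sigma_i)$
vanishes, using the conservation law and the on-shell condition once more (so on-shell is used twice, not only for the diagonal). With that adjustment your proof is exactly the intended one.
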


\begin{remark} \rm By \cite[Thm 1.2]{dJ}, when the base is one
  dimensional, the Arakelov metric has logarithmic singularities. This
  implies that equation \eqref{eq: bound-mu} does not hold for $\mu_{\Ar}$ without the ``on shell'' condition $\langle
  \ps_i, \ps_i \rangle=0$. Since the asymptotic behaviour of the
  Arakelov metric is also determined by the combinatorics of the dual
  graph of $C_{0}$, one may ask what the asymptotic of $\langle \mathfrak
  A, \mathfrak A \rangle_{\mu_{Ar}}$ is in the general case. We have a formula in terms of the Green's function associated to the Zhang measure on the metric graph $G$ with edge lengths $\underline{Y}$. We hope to return to this point in a future publication. 
\end{remark}

From Corollary \ref{cor55} we immediately derive: 

\begin{thm}\label{thm:5} Assume that the external momenta satisfy
  $\sum_{i=1}^n \ps_i=0$ and $\langle \ps_i, \ps_i \rangle=0$ for all
  $i$. Then, for any admissible segment $\underline t \colon I \to U'$,
  the following holds:
  $$
  \lim_{\alpha' \to 0} \alpha' \langle \mathfrak A_{\underline t(\alpha')},
  \mathfrak A_{\underline t(\alpha')}\rangle'_{\mu_{Ar}}=\frac{\phi_G(\ps^G,
    \underline Y)}{\psi_G(\underline Y)}.
  $$
\end{thm}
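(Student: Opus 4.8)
The plan is to obtain Theorem~\ref{thm:5} from Corollary~\ref{cor55} by exactly the limiting argument already used to pass from Corollary~\ref{cor:1} to Theorem~\ref{thm:main}. First I would fix an admissible segment $\underline t\colon I\to U'$ and pick a continuous lift $\underline z\colon I\to\widetilde U$ to the universal cover, so that $t_e(\alpha')=\exp(2\pi i\,z_e(\alpha'))$ for every edge $e\in E$. Writing $y_e(\alpha')=\Im(z_e(\alpha'))=-\tfrac1{2\pi}\log|t_e(\alpha')|$, the defining property of an admissible segment gives
$$
\alpha'\,y_e(\alpha')=-\tfrac1{2\pi}\log|t_e(\alpha')|^{\alpha'}\ \xrightarrow[\ \alpha'\to 0\ ]{}\ \tfrac1{2\pi}Y_e ,
$$
with each $Y_e>0$.

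Next I would apply Corollary~\ref{cor55}, which is formulated precisely for the versal deformation $\mathcal C'\to S'$ of the stable $n$-marked curve $C_0$, the setting of the present statement. Since the external momenta satisfy $\sum_i\ps_i=0$ and the on shell condition $\langle\ps_i,\ps_i\rangle=0$, the corollary yields
$$
\langle\mathfrak A_{\underline t(\alpha')},\mathfrak A_{\underline t(\alpha')}\rangle'_{\mu_{\Ar}}
=2\pi\,\frac{\phi_G(\underline{\ps}^G,\underline y(\alpha'))}{\psi_G(\underline y(\alpha'))}+h(\underline t(\alpha')),
$$
for a bounded function $h\colon U'\to\R$. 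Multiplying by $\alpha'$ and using that $\phi_G/\psi_G$ is homogeneous of degree one in the edge variables, the right-hand side equals
$$
2\pi\,\frac{\phi_G(\underline{\ps}^G,(\alpha'y_e(\alpha'))_{e})}{\psi_G((\alpha'y_e(\alpha'))_{e})}+\alpha'\,h(\underline t(\alpha')).
$$
Letting $\alpha'\to 0$, the first term tends to $2\pi\,\phi_G(\underline{\ps}^G,(\tfrac1{2\pi}Y_e)_e)/\psi_G((\tfrac1{2\pi}Y_e)_e)=\phi_G(\underline{\ps}^G,\underline Y)/\psi_G(\underline Y)$ by continuity of $\phi_G/\psi_G$ at the point $(\tfrac1{2\pi}Y_e)_e$, and $\alpha'h(\underline t(\alpha'))\to 0$ since $h$ is bounded; this is the asserted limit.

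Once Corollary~\ref{cor55} is in hand the argument is purely formal, so there is no genuine obstacle in this final step --- the substantive work lies in Theorems~\ref{thm:4} and~\ref{thm:8}. The only points needing care are the bookkeeping of the $2\pi$ factors between the normalization $y_e=-\tfrac1{2\pi}\log|s_e|$ and the edge lengths $Y_e=-\lim_{\alpha'\to 0}\log|t_e(\alpha')|^{\alpha'}$, and the remark that $\psi_G((\alpha'y_e(\alpha'))_e)$ stays bounded away from $0$ as $\alpha'\to0$: this holds because $\psi_G$ has nonnegative coefficients and each argument $\alpha'y_e(\alpha')$ converges to the strictly positive number $\tfrac1{2\pi}Y_e$, so $\psi_G$ is positive at the limit and the quotient $\phi_G/\psi_G$ is continuous there.
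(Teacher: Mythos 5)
Your proposal is correct and follows exactly the paper's route: Theorem \ref{thm:5} is obtained from Corollary \ref{cor55} by the same limiting argument (lift to the universal cover, $\alpha' y_e(\alpha')\to \tfrac{1}{2\pi}Y_e$, homogeneity of degree one of $\phi_G/\psi_G$, boundedness of $h$) that the paper uses to pass from Corollary \ref{cor:1} to Theorem \ref{thm:main}. The bookkeeping of the $2\pi$ factors and the positivity of $\psi_G$ at the limit point are handled correctly, so nothing is missing.
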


\end{document}